\def\eps{\varepsilon}
\def\lam{\lambda}
\def\al{\alpha}
\def\wt{\widetilde}
\def\Q{\mathbb{Q}}
\def\P{\mathbb{P}}
\def\E{{\mathbb{E}}}
\newcommand{\R}{{\mathbb R}}
\newcommand{\F}{{\mathbb F}}
\newcommand{\N}{{\mathbb N}}
\newcommand{\bl}{\color{blue}}
\newcommand{\bd}{\begin{displaymath}}
\newcommand{\ed}{\end{displaymath}}
\newcommand{\be}{\begin{equation}}
\newcommand{\ee}{\end{equation}}
\newcommand{\bq}{\begin{eqnarray}}
\newcommand{\eq}{\end{eqnarray}}
\newcommand{\bn}{\begin{eqnarray*}}
\newcommand{\en}{\end{eqnarray*}}
\newcommand{\dl}{\delta}
\newtheorem{theorem}{Theorem}[section]
\newtheorem{lemma}[theorem]{Lemma}
\newtheorem{proposition}[theorem]{Proposition}
\newtheorem{corollary}[theorem]{Corollary}
\newtheorem{remark}[theorem]{Remark}
\newtheorem{example}[theorem]{Example}
\newtheorem{definition}[theorem]{Definition}
\newtheorem{assumption}[theorem]{Assumption}
\numberwithin{equation}{section}
\date{\today}
\title{Stochastic Graphon Games with Memory }
\author[]{Eyal Neuman}
\author[]{Sturmius Tuschmann\thanks{ST is supported by the EPSRC Centre for Doctoral Training in Mathematics of Random \mbox{Systems}: Analysis, Modelling and Simulation (EP/S023925/1).}}
\affil[]{Department of Mathematics, Imperial College London}
\begin{document}
\maketitle

\begin{abstract} 
% Long Abstract
We study finite-player dynamic stochastic games with heterogeneous interactions and non-Markovian linear-quadratic objective functionals. We derive the Nash equilibrium explicitly by converting the first-order conditions into a coupled system of stochastic Fredholm equations, which we solve in terms of operator resolvents. When the agents' interactions are modeled by a weighted graph, we formulate the corresponding non-Markovian continuum-agent game, where interactions are modeled by a graphon. We also derive the Nash equilibrium of the graphon game explicitly by first reducing the first-order conditions to an infinite-dimensional coupled system of stochastic Fredholm equations, then decoupling it using the spectral decomposition of the graphon operator, and finally solving it in terms of operator resolvents.

Moreover, we show that the Nash equilibria of finite-player games on graphs converge to those of the graphon game as the number of agents increases. This holds both when a given graph sequence converges to the graphon in the cut norm and when the graph sequence is sampled from the graphon. We also bound the convergence rate, which depends on the cut norm in the former case and on the sampling method in the latter.
Finally, we apply our results to various stochastic games with heterogeneous interactions, including systemic risk models with delays and stochastic network games.

% 150 Words Shortened Abstract
%We analyze finite-player dynamic stochastic games with heterogeneous interactions and non-Markovian linear-quadratic objective functionals, deriving explicit Nash equilibria by means of operator resolvents. For cases where interactions are represented by a weighted graph, we extend the framework to a continuum-player game with interactions modeled by a graphon. We then derive the explicit graphon game Nash equilibrium, by reducing the first-order conditions to an infinite-dimensional coupled system of stochastic Fredholm equations, and introducing a novel approach utilizing the spectral decomposition of the graphon operator in order to solve it. Additionally, we demonstrate the convergence of Nash equilibria from finite-player games to the one of the graphon game as the number of agents grows, providing explicit convergence rates. Finally, we apply these results to diverse stochastic games, including systemic risk models with delay and stochastic differential network games, showcasing our framework’s broad applicability.

\end{abstract} 

\begin{description}
\item[Mathematics Subject Classification (2020):] 	49N10, 91A15, 91A43, 93E20 
\item[Keywords:] graphon games, stochastic games,  network games, Nash equilibrium, non-Markovian, Volterra stochastic control
\end{description}

%\tableofcontents

\newpage
\section{Introduction}\label{sec:introduction}
Network games are games in which the interactions between agents are heterogeneous, often modeled using a graph. They have numerous applications across diverse fields, including economics, social dynamics, transportation, epidemiology and cybersecurity among others (see e.g. \citet{galeotti2010network,jackson2008social}). However, analyzing network games is often challenging or even intractable due to the heterogeneity of the interactions. To address this issue, researchers frequently study their scaling limits as the number of agents tends to infinity (\citet{caines2021graphon,lacker2022case,parise2019graphon}).

The scaling limits of graph sequences are of independent interest, particularly for dense sequences, which have garnered significant attention, while results on sparse sequences are less abundant. \citet{lovasz2006limits} demonstrated that dense graph sequences with converging subgraph densities converge to a natural limit known as a graphon, which is a symmetric measurable function $W:[0,1]^2\to [0,1]$ that captures these densities. \citet{borgs2008convergent,borgs2012convergent} further substantiated the role of graphons as the appropriate limiting objects for dense graph sequences and introduced the cut distance, which characterizes the convergence of these sequences and facilitates sampling graphs from graphons. See \citet{lovasz2012large} for a comprehensive overview of these concepts, which summarizes convergence results for both dense and sparse graph sequences.

Lovász and Szegedy's seminal work \cite{lovasz2006limits} on the convergence of dense graph sequences introduces the concept of graphon games, which serve as infinite-population analogues of finite-player games, where interactions are modeled via a graph. Motivated by the application of graphons in large population analysis, \citet{parise2019graphon} introduced a new class of static infinite-player games called graphon games. In these games, a continuum of agents interacts according to a graphon $W$, where $W(u,v)$ describes the interaction of the two infinitesimal agents $u,v\in [0,1]$. They demonstrated that Nash equilibria of static graphon games can approximate Nash equilibria of network games on large graphs, which are sampled from the graphon. See also \citet{carmona2022stochastic} and \citet{parise2021analysis} for further work on static graphon games.

Dynamic stochastic graphon games were first studied by \citet{gao2020linear}, who derived an approximate Nash equilibrium for the corresponding finite-player games on large graphs in the linear-quadratic case. This case was further investigated by \citet{aurell2022stochastic}, who incorporated idiosyncratic Brownian shocks into each player's state, reconciling these shocks using Fubini extensions. \citet{bayraktar2023propagation} studied, in greater generality, the convergence of the Nash equilibrium of network games to that of the limiting graphon game, using the propagation of chaos. A similar approach was taken by \citet{tangpi2024optimal} in the context of an optimal investment problem under relative performance criteria. \citet{lacker2023label} proposed a novel formulation for stochastic games on large graphs and their graphon limits, focusing on the label-state distribution of a representative agent. This approach facilitates some existing continuum-player models and circumvents the use of the aforementioned Fubini extensions.

Another related class of games, called graphon mean-field games, was introduced by \citet{caines2018graphon,caines2019graphon,caines2021graphon}. In these games, each node in the network is associated with not just one agent, but a cluster of agents, each with homogeneous interactions among them.
We also briefly mention the recent work by \citet{lacker2022case}, who investigated linear-quadratic stochastic differential network games, where the agents' interactions are modeled via a simple graph. Assuming the underlying graph to be vertex-transitive, they derived semiexplicit expressions for the Markovian Nash equilibrium of the game, enabling large-population asymptotics for various graph sequences, including several sparse and dense examples. See also \citet{hu2024finite} for further extensions of this framework.

All the previous works described above focus on Markovian network games and their scaling limits. However, stochastic games with heterogeneous interactions, where both the states and controls are non-Markovian, arise in various applications, and their solutions are typically only available for specific examples. Important applications of network games arise in models of systemic risk (see, e.g., \citet{carmona2015mean}, \citet{carmona2018systemic}, and \citet{sun2022mean}), which often involve heterogeneous interactions between banks with different characteristics. In this class of games, the controls correspond to the rates of lending or borrowing between banks, while the logarithmic monetary reserves of each bank are described by delayed equations in relation to these controls.

The main goal of this work is to provide a general framework for explicitly deriving the Nash equilibrium of linear-quadratic non-Markovian games with heterogeneous interactions. In the finite-player setting, this is achieved by deriving the first-order conditions of the agents' performance functionals as a coupled system of stochastic Fredholm equations, which we solve in terms of operator resolvents. When the agents' interactions are modeled via a weighted graph, we formulate the scaling limits of the game as a non-Markovian continuum-player graphon game. We then introduce a novel approach to solving this game by first deriving the first-order conditions in terms of an infinite-dimensional stochastic Fredholm equation. We decouple the system using the spectral decomposition of the graphon operator and derive its solution in terms of operator resolvents. Additionally, we show that the Nash equilibria of the finite-player games converge to those of the graphon game as the number of agents approaches infinity, both when a given graph sequence converges to the graphon in the cut norm and when the graph sequence is sampled from the graphon. We also specify the convergence rate, which depends on the cut norm in the former case and on the chosen sampling method in the latter. Finally, we apply our results to various stochastic games with heterogeneous interactions.

In addition to its application to the aforementioned systemic risk models, we demonstrate that our general framework accommodates and extends results on stochastic differential network games, as studied by \citet{aurell2022stochastic}, as well as stochastic differential games on simple graphs, as investigated by \citet{lacker2022case} and \citet{hu2024finite}. This illustrates that our framework can also be applied to solve Markovian games. Furthermore, it paves the way for studying stochastic Volterra games with heterogeneous interactions.
\paragraph{Our main contributions.} 
Below, we summarize our main contributions, distinguishing between non-Markovian network games with a finite number of agents, non-Markovian graphon games as their scaling limits, and convergence results for both given and sampled graph sequences.
\begin{itemize} 
\item \textbf{Network games:}  we provide a general framework for deriving explicit finite-player Nash equilibria in terms of operator resolvents for non-Markovian linear-quadratic network games with heterogeneous interactions. Our approach incorporates both progressively measurable idiosyncratic and common noise, and requires minimal coercivity assumptions on the interaction operators between agents (see Assumption~\ref{assum:nonnegative-definite} and Theorem~\ref{thm:finite-player-equilibrium}). Our results generalize Theorem~2.7 from \citet{abijaber2023equilibrium}, who studied a fully connected network with homogeneous interactions.

We apply the results of our general framework to various classes of non-Markovian and Markovian network games, including several important cases that were previously considered intractable. These include stochastic Volterra games with heterogeneous interactions, stochastic differential games with delayed controls arising from heterogeneous inter-bank lending and borrowing models (\citet{carmona2018systemic}, \citet{carmona2015mean}, \citet{sun2022mean}), stochastic differential network games (\citet{aurell2022stochastic}), and stochastic differential games on simple graphs (\citet{lacker2022case}, \citet{hu2024finite}). We refer to Section~\ref{sec:examples} for additional details. 

\item \textbf{Graphon games:} We formulate the infinite-player version of a non-Markovian linear-quadratic network game on a weighted graph as a graphon game, and establish the first-order condition in terms of an infinite-dimensional system of stochastic Fredholm equations, with both forward and backward components (see Proposition~\ref{prop:fredholm-foc-infinite}). We develop a novel approach for explicitly solving this system using the spectral decomposition of the graphon (see Theorem~\ref{thm:infinite-player-equilibrium}). By providing a canonical solution to non-Markovian linear-quadratic graphon games, our approach extends the existing literature on dynamic graphon games (\citet{aurell2022stochastic}, \citet{gao2020linear}, \citet{lacker2023label}, \citet{tangpi2024optimal}), which addresses the Markovian case, to the non-Markovian setting. Note that even in the aforementioned studies on Markovian graphon games, the solutions are often represented in terms of FBSDE systems, which are solvable only in specific examples. Finally, our methods extend the solution techniques developed in \citet{abijaber2023equilibrium}, \citet{abi2024optimal} from finite-dimensional systems of stochastic Fredholm equations to the infinite-dimensional case.

\item \textbf{Convergence:}  We prove convergence and derive a bound on the convergence rate of the finite-player network game to the corresponding graphon game using two approaches. First, we consider a sequence of weighted graphs with adjacency matrices $(w^N)_{N\in\N}$ that converge to a graphon $W$ in the cut norm. We show that the sequence of Nash equilibria for the finite-player games converges to the Nash equilibrium of the corresponding graphon game (see Theorem~\ref{thm:convergence-given-graphs}). This result extends the convergence results of \citet{bayraktar2023propagation}, \citet{carmona2022stochastic}, \citet{cui2021learning}, \citet{gao2020linear}, \citet{lacker2023label}, and \citet{tangpi2024optimal}, among others, to the non-Markovian case in the linear-quadratic setting

Second, we prove convergence for sampled graph sequences (see Theorem~\ref{thm:convergence-sampled-graphs}). We sample sequences of weighted and simple graphs from a fixed graphon 
$W$ using four different procedures and show that the Nash equilibria of the sampled games converge to the Nash equilibrium of the graphon game. This approach, which takes the reverse perspective of the first approach, has been employed by \citet{carmona2022stochastic} and \citet{parise2019graphon} for static games, and by \citet{aurell2022stochastic} for linear-quadratic Markovian games. See Section~\ref{sec:convergence} for a detailed description of the convergence results.
 \end{itemize}

\paragraph{Structure of the paper.} In Section~\ref{sec:finite-player-game} we present our results on network games with memory. In Section~\ref{sec:infinite-player-game} we derive the main results for the corresponding infinite-player graphon game. Section~\ref{sec:convergence} is dedicated to the convergence results. In Section~\ref{sec:examples} we provide applications of our general framework. Sections \ref{sec:proofs-finite}--\ref{sec:proofs-convergence} are dedicated to the proofs of our main results.  

\section{The Finite-Player Game}\label{sec:finite-player-game}
In this section, we formulate the finite-player game  with heterogeneous interactions and derive its Nash equilibrium in explicit form. For this, we first introduce in Section~\ref{subsec:function-spaces} the integral operators which will appear in the players' objective functionals. In Section~\ref{subsec:definition-finite-player-game}, we define the finite-player game. In Section~\ref{subsec:results-finite-player-game}, we then solve it explicitly (see Theorem~\ref{thm:finite-player-equilibrium}).
\subsection{Function spaces and integral operators} \label{subsec:function-spaces}
Let $n\in\N$ be a positive integer. Throughout this paper, $n$ will mostly be equal to either 1 or the number of players $N$ in the finite-player game. We denote by $\langle \cdot, \cdot \rangle_{L^2,n}$ the inner product on $L^2([0,T], \R^n)$, that is 
\be  \label{eq:inner-product}
\langle f, g\rangle_{L^2,n} = \int_0^T f(t)^\top g(t) dt, \quad f,g\in L^2\left([0,T],\mathbb R^n\right),
\ee
and by $\| \cdot\|_{L^2,n}$ the norm induced by it. If $n=1$, we just write $\langle \cdot, \cdot \rangle_{L^2}$ and $\| \cdot\|_{L^2}$, respectively.
We define $L^2\left([0,T]^2,\mathbb R^{n\times n}\right)$ to be the space of Borel-measurable kernels $G:[0,T]^2 \to \R^{n\times n}$ such that 
\be\label{eq:square-integrable}
\int_0^T \int_0^T \|G(t,s)\|^2 dt ds < \infty,
\ee
where $\|\cdot\|$ denotes the Frobenius norm.
For any  kernel $G \in L^2\left([0,T]^2,\mathbb R^{n\times n}\right)$, we denote by {$\boldsymbol G$} the integral operator induced by the kernel $G$, given by
\be\label{eq:induced-operator}
({\boldsymbol G} f)(t)=\int_0^T G(t,s) f(s)ds,\quad f \in L^2\left([0,T],\mathbb R^n\right).
\ee
Due to condition \eqref{eq:square-integrable}, $\boldsymbol G$ is a bounded linear operator from  $L^2\left([0,T],\R^n \right)$ into itself (see Theorem~9.2.4 and Proposition~9.2.7 (iii) in \citet{gripenberg1990volterra}). 
We denote by $G^*$ the adjoint kernel of $G$ with respect to $\langle \cdot, \cdot \rangle_{L^2,n}$, that is 
\be \label{eq:adjoint-operator} 
G^*(t,s) = \; G(s,t)^\top, \quad  (t,s) \in [0,T]^2,
\ee
and by $\boldsymbol{G}^*$ the corresponding adjoint integral operator.  Moreover, we say that a kernel $G\in L^2\left([0,T]^2,\mathbb R^{n\times n}\right)$ is a Volterra kernel, if $G(t,s)=0$ for $s \geq t$. $G$ is called nonnegative definite if for every $f\in L^2\left([0,T],\R^n\right)$ we have  
\be \label{eq:nonnegative-definite}
\int_{0}^T\int_{0}^Tf(t)^\top G(t,s)f(s)dsdt=\langle f,\boldsymbol{G}f\rangle_{L^2,n} \geq 0. 
\ee
\begin{remark}\label{rem:nonnegative-operators}
Note that if $G\in L^2([0,T]^2,\R^{n\times n})$, then it holds that
\be \label{prod-id}
\langle f,\boldsymbol{G}f\rangle_{L^2,n}=\langle f,\boldsymbol{G}^*f\rangle_{L^2,n}=\frac{1}{2}\langle f,(\boldsymbol{G}+\boldsymbol{G}^*)f\rangle_{L^2,n}, \quad  \text{for all } f\in L^2([0,T],\R^n).
\ee
Moreover, recalling \eqref{eq:induced-operator}, \eqref{eq:adjoint-operator} and \eqref{eq:nonnegative-definite}, the following statements are equivalent:
\begin{enumerate}
\item [\textbf{(i)}] The integral operator $\boldsymbol{G}$ is nonnegative definite,
\item [\textbf{(ii)}] The integral operator $\boldsymbol{G}^*$ is nonnegative definite,
\item [\textbf{(iii)}] The integral operator $\boldsymbol{G}+\boldsymbol{G}^*$ is nonnegative definite,
\end{enumerate}
\end{remark}

\begin{definition}
\label{def:admissible-volterra-operator} 
Define $\mathcal G$ to be the class of Volterra kernels in  $L^2\left([0,T]^2,\R\right)$. An integral operator which is induced by a kernel in $\mathcal{G}$ as in \eqref{eq:induced-operator} is called an admissible Volterra operator.
\end{definition} 

\subsection{Definition of the finite-player game} \label{subsec:definition-finite-player-game} 
%We present the class of finite-player stochastic games which are studied in this paper. 
Let $T>0$ denote a finite deterministic time horizon and let $N\in \N$ be an integer. We fix a filtered probability space $(\Omega, \mathcal F, \mathbb F:=(\mathcal F_t)_{0 \leq t \leq T}, \mathbb P)$ satisfying the usual conditions of right-continuity and completeness. We use the notation $\mathbb E_t\vcentcolon = \mathbb E [\, \cdot\, | \mathcal F_t]$ to represent the conditional expectation with respect to $\mathcal{F}_t$. 
We consider $N$ players $i \in \{1, \ldots, N\}$ who select their controls $\al^{i,N}$ from the set of admissible controls  
\be  
\mathcal A :=\left\{\al:\Omega\times [0,T]\to\R \, : \, \mathbb{F}\textrm{-progressively measurable with } \int_0^T \mathbb E\big[\al_t^2 \big] dt  <\infty \right\},
\ee
and use the notation
\be \label{eq:notation-alpha}
\al^N \vcentcolon= (\al^{1,N},\ldots, \al^{N,N}),\quad \al^{-i,N} \vcentcolon= (\al^{1,N},\ldots,\al^{i-1,N},\al^{i+1,N},\ldots, \al^{N,N}).
\ee
In order to define the individual objective functionals of the players, we introduce integral operators $\boldsymbol{A}^{i}$,  $\boldsymbol{B}^{ij}$, $\boldsymbol{\bar{B}}^{ij}$ and $\boldsymbol{C}^{ijk}$, which satisfy the following assumptions. 
\begin{assumption} \label{assum:A-B-C} Consider the following bounded linear operators on $L^2([0,T],\R)$:
\begin{itemize}
    \item For $i=1,\dots ,N$, we assume that $\boldsymbol{A}^{i}$ is given by 
\be \label{eq:A^i} 
\boldsymbol{A}^{i} := \boldsymbol{B}^{ii} + \lambda^i \boldsymbol{I}, 
\ee
where $\boldsymbol{B}^{ii}$ is an admissible Volterra operator induced by a kernel $B^{ii}\in\mathcal{G}$, $\lambda^i>0$ is a real constant and $\boldsymbol{I}$ denotes the identity operator on $L^2\left([0,T], \R\right)$. Denote by $\boldsymbol{\bar{B}}^{ii}$ and $\bar{B}^{ii}$ copies of $\boldsymbol{B}^{ii}$ and $B^{ii}$, respectively.
\item For $i,j=1,\ldots,N$ with $i\neq j$, we assume that $\boldsymbol{B}^{ij}$ and $\boldsymbol{\bar{B}}^{ji}$ are admissible Volterra operators induced by kernels $B^{ij}$ and $\bar{B}^{ji}$ in $\mathcal{G}$, respectively.
\item For $i,j,k=1,\ldots,N$ with $i\neq j$, $i\neq k$, we assume that $\boldsymbol{C}^{ijk}$ is an admissible Volterra operator induced by a kernel $C^{ijk}\in \mathcal{G}$.
\end{itemize}
Let $b^{ij}= (b^{ij}_t)_{0 \leq t\leq T}$ be $\mathbb{F}$-progressively measurable processes and $c^i$ be $\mathcal{F}_T$-random variables satisfying
\be  \label{eq:assumption-b-c}
\int_0^T \mathbb E\big[(b_t^{ij})^{2} \big] dt < \infty, \quad \E[c^i] < \infty, \quad \text{for }i,j=1,\ldots,N.
\ee
\end{assumption} 
Under Assumption~\ref{assum:A-B-C}, each player $i \in \{1, \ldots, N\}$ seeks to maximize the following individual objective functional:
\be
\begin{aligned}\label{eq:J^iN}
    J^{i,N}(\al^{i,N}; \al^{-i,N }) :=\E\bigg[&-\langle \al^{i,N},\boldsymbol A^{i} \al^{i,N} \rangle_{L^2} - \sum_{\substack{j=1\\j\neq i}}^N \langle \al^{i,N},\big(\boldsymbol B^{ij}+(\boldsymbol{\bar{B}}^{ji})^*\big) \al^{j,N} \rangle_{L^2}  \\ 
    -& \sum_{\substack{j,k=1\\j,k\neq i}}^N\langle \al^{j,N},\boldsymbol{C}^{ijk}\al^{k,N} \rangle_{L^2}+ \langle b^{ii}, \al^{i,N} \rangle_{L^2} +\sum_{\substack{j=1\\j\neq i}}^N \langle b^{ij}, \al^{j,N}\rangle_{L^2} + c^i \bigg]. 
\end{aligned}
\ee
\begin{remark}
The operators $\boldsymbol{B}^{ij}$, $\boldsymbol{\bar{B}}^{ij}$ and $\boldsymbol{C}^{ijk}$ from Assumption~\ref{assum:A-B-C} model the heterogeneous interactions between the $N$ players, while the operators $\lam^i\boldsymbol{I}$ function as regularization terms that ensure the concavity of the objective functionals in \eqref{eq:J^iN} whenever all  the operators $\boldsymbol{B}^{ii}$ are nonnegative definite or Assumption~\ref{assum:nonnegative-definite} holds. If the operators and processes in Assumption~\ref{assum:A-B-C} are chosen to be identical for all $i,j,k=1,\ldots,N$ with $i\neq j$, $i\neq k$, such that $\boldsymbol{C}^{ijk}=\boldsymbol{A}_1/N^2$, 
$\boldsymbol{B}^{ij}=\boldsymbol{\bar{B}}^{ji}=\boldsymbol{A}_3/N+\boldsymbol{A}_1/N^2$,
$\boldsymbol{A}^{i}=\boldsymbol{A}_2+2\boldsymbol{A}_3/N+\boldsymbol{A}_1/N^2$, $b^{ii}=b^i+b^0/N$, and $b^{ij}=b^0/N$, the mean-field framework with homogeneous interactions from \citet{abijaber2023equilibrium} is recovered.
\end{remark}
\begin{remark}\label{rem:J^iN-concise-form}
    For every $i=1,\ldots, N$, consider the matrix-valued Volterra kernel $F^i=(F^i_{jk})_{j,k=1}^N: [0,T]^2\to\R^{N\times N}$ whose entries are given by  
\be
F^i_{jk}(t,s)\vcentcolon =\begin{cases}
    B^{ik}(t,s),\hspace{2cm} &j=i,\\
    \bar{B}^{ji}(t,s), &k=i,\\
    C^{ijk}(t,s), &j\neq i,\,k\neq i,
\end{cases}\qquad \text{for }j,k=1,\ldots, N,\  (t,s)\in [0,T]^2,
\ee
and the $\R^N$-valued $\F$-progressively measurable process given by $b^i_t\vcentcolon=(b_t^{i1},\ldots, b_t^{iN})^\top$. Then, recalling \eqref{eq:inner-product}, \eqref{eq:induced-operator}, \eqref{eq:notation-alpha} and Remark \ref{rem:nonnegative-operators}, the objective functional in \eqref{eq:J^iN} can be written in the following compact form
\be
\label{eq:J^iN-concise-form}
    J^{i,N}(\al^{i,N}; \al^{-i,N }) =  \, \E\left[ -\lam^i\langle \al^{i,N}, \al^{i,N} \rangle_{L^2}-\langle \al^N,\boldsymbol F^{i} \al^N \rangle_{L^2,N} + \langle b^{i}, \al^N \rangle_{L^2,N} + c^i \right].  
\ee
%This concise form will be useful to demonstrate in Section~\ref{sec:examples} that our framework accommodates various stochastic games with heterogeneous interactions from the literature.
\end{remark}
The main goal of this section is to solve simultaneously for each of the $N$ players their individual stochastic optimal control problem
\begin{equation} 
\underset{\al^{i,N} \in \mathcal{A}} \max \ J^{i,N}(\al^{i,N};\al^{-i,N}), \quad i=1,...,N. 
\end{equation}
The solution will establish a Nash equilibrium in the following sense.
\begin{definition} \label{def:Nash-finite-player}
A set of strategies $\hat{\al}^N = (\hat{\al}^{1,N},\ldots,\hat{\al}^{N,N}) \in \mathcal{A}^N$ is called a  Nash equilibrium if for all $i \in \{1,\dots, N\}$ and for all admissible strategies $\beta\in \mathcal{A}$ it holds that
\be
J^{i,N}(\hat{\al}^{i,N};\hat{\al}^{-i,N}) \geq J^{i,N}(\beta;\hat{\al}^{-i,N}). 
\ee
\end{definition}

\subsection{Main results for the finite-player game} \label{subsec:results-finite-player-game}
For convenience, we introduce the Volterra kernels in $L^2([0,T]^2,\R^{N\times N})$ given by
\be\label{eq:B-Bbar}
B\vcentcolon = \begin{pmatrix} 
    B^{11} & \dots  & B^{1N}\\
    \vdots & \ddots        & \vdots\\
    B^{N1} & \dots  & B^{NN} 
    \end{pmatrix}, \quad\quad
\bar B\vcentcolon =     \begin{pmatrix} 
    \bar{B}^{11} & \dots  & \bar{B}^{1N}\\
    \vdots & \ddots        & \vdots\\
    \bar{B}^{N1} & \dots  & \bar{B}^{NN} 
    \end{pmatrix},
\ee
and denote by $\boldsymbol{B}$ and $\boldsymbol{\bar{B}}$ the bounded linear operators on $L^2([0,T],\R^N)$ induced by them. We also set
\be\label{eq:Lambda}
\Lambda\vcentcolon=\operatorname{diag}(\lambda^1,\ldots,\lambda^N)\in\R^{N\times N},
\ee
and
\be \label{eq:b} 
b:\Omega\times [0,T]\to\R^N,\quad    b_t := \big(b^{ii}_t \big)_{i\in\{1,\ldots,N\}}.
\ee
We can characterize the Nash equilibrium of the finite-player game as in Definition~\ref{def:Nash-finite-player} by a system of stochastic Fredholm equations as follows.
\begin{proposition}\label{prop:fredholm-foc}
Let Assumption~\ref{assum:A-B-C} hold and assume that the objective functionals $\al^{i,N} \mapsto J^{i,N}(\al^{i,N}; \al^{-i,N})$ are strictly concave in $\al^{i,N}\in\mathcal{A}$ for all $i\in\{1,\ldots, N\}$. Then, a set of strategies $\al^N=(\al^{1,N},\dots,\al^{N,N})\in\mathcal{A}^N$ is a Nash equilibrium if and only if it satisfies the $N$-dimensional coupled system of stochastic Fredholm equations of the second kind given by
\be \label{eq:fredholm-foc}
2 \Lambda \al^N_t =  b_t  - \int_0^t   B(t,s) \al^N_s ds -  \int_t^T \bar{B}(s,t)^\top  \E_t [\al^N_s] ds, \quad d\P \otimes dt \textrm{-a.e.~on } \Omega \times [0,T], 
\ee
where $B$, $\bar{B}$, $\Lambda$ and $b$ are defined in \eqref{eq:B-Bbar}, \eqref{eq:Lambda} and \eqref{eq:b}, respectively. 
\end{proposition}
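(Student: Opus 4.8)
The plan is to characterize the Nash equilibrium via a first-order (variational) condition for each player's individual optimization problem, exploiting the strict concavity hypothesis to get an ``if and only if''. Fix $i \in \{1,\ldots,N\}$ and a candidate profile $\al^N \in \mathcal{A}^N$. Since $\beta \mapsto J^{i,N}(\beta; \al^{-i,N})$ is strictly concave on the Hilbert space $\mathcal{A} \cong L^2(\Omega\times[0,T])$ (using the compact form \eqref{eq:J^iN-concise-form}: the quadratic part is $-\lambda^i\|\beta\|^2 - \E\langle (\beta,\al^{-i,N}),\boldsymbol F^i(\beta,\al^{-i,N})\rangle$, whose only $\beta$-quadratic term is $-\lambda^i\|\beta\|^2 - \E\langle \beta, \boldsymbol B^{ii}\beta\rangle$), the point $\al^{i,N}$ is the unique maximizer of $J^{i,N}(\cdot;\al^{-i,N})$ if and only if the Gâteaux derivative vanishes:
\[
\frac{d}{d\eps}\Big|_{\eps=0} J^{i,N}(\al^{i,N}+\eps\beta; \al^{-i,N}) = 0 \quad \text{for all } \beta \in \mathcal{A}.
\]
Thus $\al^N$ is a Nash equilibrium (Definition~\ref{def:Nash-finite-player}) iff this holds for every $i$.

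Next I would compute the directional derivative explicitly from \eqref{eq:J^iN}. Only the terms of $J^{i,N}$ involving $\al^{i,N}$ contribute. Differentiating $-\langle \al^{i,N},\boldsymbol A^i\al^{i,N}\rangle_{L^2}$ gives $-\langle \beta,(\boldsymbol A^i + (\boldsymbol A^i)^*)\al^{i,N}\rangle_{L^2}$; differentiating $-\sum_{j\neq i}\langle \al^{i,N},(\boldsymbol B^{ij}+(\boldsymbol{\bar B}^{ji})^*)\al^{j,N}\rangle_{L^2}$ gives $-\sum_{j\neq i}\langle\beta,(\boldsymbol B^{ij}+(\boldsymbol{\bar B}^{ji})^*)\al^{j,N}\rangle_{L^2}$; the $\boldsymbol C^{ijk}$ term does not involve $\al^{i,N}$ and drops out; and $\langle b^{ii},\al^{i,N}\rangle_{L^2}$ contributes $\langle b^{ii},\beta\rangle_{L^2}$. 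Setting the sum to zero for all $\beta\in\mathcal A$ and using $\boldsymbol A^i = \boldsymbol B^{ii}+\lambda^i\boldsymbol I$ together with \eqref{prod-id} from Remark~\ref{rem:nonnegative-operators} (so that $\boldsymbol B^{ii}+(\boldsymbol B^{ii})^*$ appears, matching $\boldsymbol B^{ii} + (\boldsymbol{\bar B}^{ii})^*$ since $\boldsymbol{\bar B}^{ii}$ is a copy of $\boldsymbol B^{ii}$), one obtains for each $i$ a scalar stochastic Fredholm equation of the form
\[
2\lambda^i \al^{i,N}_t = b^{ii}_t - \sum_{j=1}^N\Big(\int_0^T B^{ij}(t,s)\al^{j,N}_s\,ds\Big) - \sum_{j=1}^N \Big((\boldsymbol{\bar B}^{ji})^*\al^{j,N}\Big)_t,
\]
interpreted $d\P\otimes dt$-a.e. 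The key technical point here is the treatment of the randomness: since $\beta$ ranges over all $\mathbb F$-progressively measurable square-integrable processes, the condition $\E\langle h,\beta\rangle_{L^2}=0$ for all such $\beta$ forces the optional/progressive projection of $h$ to vanish, i.e. $\E_t[h_t]=0$ $d\P\otimes dt$-a.e.; applied to the adjoint Volterra terms $(\boldsymbol{\bar B}^{ji})^*\al^{j,N}$, whose kernel $\bar B^{ji}(s,t)$ is supported on $s\geq t$, this produces the anticipating integral $\int_t^T \bar B^{ji}(s,t)\,\E_t[\al^{j,N}_s]\,ds$, explaining the conditional expectation and the split into $\int_0^t$ (from the Volterra kernels $B^{ij}$, adapted) and $\int_t^T$ (from the adjoints). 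I expect this measurability/projection argument — correctly passing from ``$\E\langle h,\beta\rangle=0$ for all progressive $\beta$'' to the pointwise equation with the conditional expectation, and carefully tracking which integrals become anticipative — to be the main obstacle; everything else is bookkeeping.

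Finally, I would assemble the $N$ scalar equations into the vector equation \eqref{eq:fredholm-foc}: stacking $\al^{i,N}$ into $\al^N$, the diagonal $2\lambda^i$ terms become $2\Lambda\al^N_t$ by \eqref{eq:Lambda}, the $b^{ii}_t$ terms become $b_t$ by \eqref{eq:b}, the Volterra double sum becomes $\int_0^t B(t,s)\al^N_s\,ds$ with $B$ as in \eqref{eq:B-Bbar}, and the adjoint double sum becomes $\int_t^T \bar B(s,t)^\top\,\E_t[\al^N_s]\,ds$ since $\big(\bar B(s,t)^\top\big)_{ij} = \bar B^{ji}(s,t)$. The boundedness of all operators on $L^2([0,T],\R^N)$ (guaranteed by \eqref{eq:square-integrable} via Assumption~\ref{assum:A-B-C}) ensures each term lies in $\mathcal A$, so the equation is well posed. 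The equivalence then reads: $\al^N$ is a Nash equilibrium $\iff$ for every $i$ the Gâteaux derivative of $J^{i,N}(\cdot;\al^{-i,N})$ at $\al^{i,N}$ vanishes $\iff$ \eqref{eq:fredholm-foc} holds, completing the proof.
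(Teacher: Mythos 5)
Your proposal is correct and follows essentially the same route as the paper: strict concavity reduces each best response to a vanishing Gâteaux derivative, the derivative is computed termwise from \eqref{eq:J^iN}, conditioning on $\mathcal F_t$ (tower property) converts the orthogonality condition over all progressive $\beta$ into the pointwise equation with $\E_t[\al^{j,N}_s]$ in the anticipative part, and the Volterra structure of $B^{ij}$, $\bar B^{ji}$ splits the integrals into $\int_0^t$ and $\int_t^T$ before stacking into \eqref{eq:fredholm-foc}. Just make sure the intermediate scalar equation is stated only after the projection step, so that the adjoint term already carries the conditional expectation, as you indicate in the subsequent discussion.
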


The proof of Proposition~\ref{prop:fredholm-foc} will be given in Section~\ref{sec:proofs-finite}.

In order to explicitly derive the Nash equilibrium of the finite-player game, we make the following  assumption on $\boldsymbol{B}$, $\boldsymbol{\bar{B}}$ and $\Lambda$.
\begin{assumption}\label{assum:nonnegative-definite}
Recalling \eqref{eq:B-Bbar} and \eqref{eq:Lambda}, assume that there exists a constant $c_0>0$ such that  
\be
\langle f, \left(\boldsymbol{B}+\boldsymbol{\bar{B}}^*\right)f+2\Lambda f-c_0 f\rangle_{L^2,N}\geq 0,\quad \text{for all } f\in L^2([0,T],\R^N),
\ee
i.e. the operator
\be
\boldsymbol{B}+\boldsymbol{\bar{B}}^*+2\Lambda\boldsymbol{I}^N-c_0\boldsymbol{I}^N
\ee
is nonnegative definite, where $\boldsymbol{I}^N$ denotes the identity operator on $L^2([0,T],\R^N)$.
\end{assumption}
\begin{remark}
Assumption~\ref{assum:nonnegative-definite} equivalently states that the bilinear form on the Hilbert space $L^2([0,T],\R^N)$ defined by 
\be
(f,g)\mapsto \big\langle (\boldsymbol{B}+\boldsymbol{\bar{B}}^*+2\Lambda\boldsymbol{I}^N)f,g\big\rangle_{L^2,N},\quad f,g\in L^2([0,T],\R^N),
\ee
is coercive. Notice that in order to derive the a solution to \eqref{eq:fredholm-foc} we need to solve the the stochastic analog to deterministic equations of the form,   
\be
(\boldsymbol{B}+\boldsymbol{\bar{B}}^*+2\Lambda\boldsymbol{I}^N)f=h
\ee
for a fixed $h$, where $f,h\in L^2([0,T],\R^N)$. Assumption~\ref{assum:nonnegative-definite} is a natural coercivity condition to this class of equations, which similarly appears in the celebrated Lax-Milgram theorem (\citet{LaxMilgram+1955+167+190}).
\end{remark}
The following two definitions are needed to state the main theorem of this section.
\begin{definition} \label{def:truncated-kernel}
For a Volterra kernel $G\in L^2([0,T]^2,\R^{N\times N})$ and a fixed $t\in[0,T]$ define the truncated Volterra kernel 
\be
G_t(s,r)\vcentcolon=\mathds{1}_{\{r> t\}}G(s,r),
\ee
and let $\boldsymbol{G}_t$ denote the induced integral operator on $L^2([0,T],\R^N)$.
\end{definition}
\begin{definition}\label{def:column-wise}
Let $h\in L^2([0,T],\R^{N\times N})$ be a matrix-valued function in one variable and $\boldsymbol{D}$ be a linear operator from $L^2([0,T],\R^N)$ into itself. We define the column-wise application of $\boldsymbol{D}$ to $h$ as
\be
\boldsymbol{D}\diamond h :[0,T]\to \R^{N\times N},\quad 
\big(\boldsymbol{D}\diamond h\big)(r)\vcentcolon = \big((\boldsymbol{D}h_{\bullet 1})(r),\dots,(\boldsymbol{D}h_{\bullet N})(r)\big),
\ee
where for $i=1,\ldots,N$,
\be
h_{\bullet i}:[0,T]\to\R^N,\quad h_{\bullet i}(r)\vcentcolon =h(r)\vec{e_i},
\ee
is given by the $i$-th column of $h$. 
\end{definition}
We now state the main theorem of this section, which solves the finite-player game.
\begin{theorem}\label{thm:finite-player-equilibrium}
Under Assumptions~\ref{assum:A-B-C} and \ref{assum:nonnegative-definite}, there exists a unique Nash equilibrium $\hat{\al}^N = (\hat{\al}^{1,N},\ldots,\hat{\al}^{N,N}) \in \mathcal{A}^N$ of the finite-player game given by
\be\label{eq:finite-player-equilibrium}
\hat\al_t^N=\big((\boldsymbol{I}^N+\boldsymbol{E})^{-1}\gamma\big)(t),\quad 0\leq t \leq T,
\ee
where
\be
\gamma_t\vcentcolon=\frac{1}{2}\Lambda^{-1}\Big(b_t-\int_t^T \bar B(r,t)^\top \big(\boldsymbol{D}_t^{-1}\mathds{1}_{\{t<\cdot\}}\E_t[b_\cdot]\big)(r)dr\Big),
\ee
\be
E(t,s)\vcentcolon=-\frac{1}{2}\Lambda^{-1}\Big(\mathds{1}_{\{t>s\}}\int_t^T \bar B(r,t)^\top \big(\boldsymbol{D}_t^{-1}\diamond\mathds{1}_{\{t< \cdot\}}B(\cdot,s)\big)(r)dr-B(t,s)\Big),\hspace{3cm} \\[1ex]
\ee
\be
\boldsymbol{D}_t\vcentcolon=2\Lambda\boldsymbol{I}^N+\boldsymbol{B}_t+\boldsymbol{\bar{B}}^*_t,
\ee
where $B$, $\bar{B}$, $\Lambda$ and $b$ are defined in \eqref{eq:B-Bbar}, \eqref{eq:Lambda} and \eqref{eq:b} respectively, $\boldsymbol{E}$ denotes the integral operator induced by the Volterra kernel $E$, and $\boldsymbol{I}^N$ denotes the identity operator on $L^2([0,T],\R^N)$.
\end{theorem}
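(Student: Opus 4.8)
\textbf{Proof proposal for Theorem~\ref{thm:finite-player-equilibrium}.}
The plan is to start from the characterization in Proposition~\ref{prop:fredholm-foc}: since Assumption~\ref{assum:nonnegative-definite} guarantees strict concavity of each $J^{i,N}(\cdot;\al^{-i,N})$, a set of strategies is a Nash equilibrium if and only if it solves the coupled stochastic Fredholm system \eqref{eq:fredholm-foc}. So the whole task reduces to showing that \eqref{eq:fredholm-foc} has a unique solution in $\mathcal{A}^N$ and that this solution is given by the closed-form expression \eqref{eq:finite-player-equilibrium}. The overall strategy follows the resolvent method of \citet{abijaber2023equilibrium}, \citet{abi2024optimal}, adapted to the matrix-valued (heterogeneous) setting.

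First I would treat the backward (anticipating) part of \eqref{eq:fredholm-foc}. Fix $t$ and view the equation on the time interval $[t,T]$ after applying $\E_t[\cdot]$: writing $\psi^t_s := \E_t[\al^N_s]$ for $s\geq t$ and using the tower property, one gets that $\psi^t$ solves a \emph{deterministic} (in the sense of being driven by the $\mathcal F_t$-measurable data $\E_t[b_\cdot]$) Fredholm equation of the second kind on $[t,T]$ whose operator is exactly $\boldsymbol{D}_t = 2\Lambda\boldsymbol{I}^N + \boldsymbol{B}_t + \boldsymbol{\bar B}^*_t$ restricted to $[t,T]$ (this is where the truncated kernels of Definition~\ref{def:truncated-kernel} enter). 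The key analytic input is that $\boldsymbol D_t$ is boundedly invertible on $L^2([0,T],\R^N)$: this follows from Assumption~\ref{assum:nonnegative-definite} together with Remark~\ref{rem:nonnegative-operators}, since truncation preserves nonnegative-definiteness of $\boldsymbol B_t + \boldsymbol{\bar B}^*_t$ (restricting $\langle f,(\boldsymbol B+\boldsymbol{\bar B}^*)f\rangle$ to functions supported on $[t,T]$), so $\boldsymbol D_t - c_0\boldsymbol I^N$ is still nonnegative definite and hence $\boldsymbol D_t$ is coercive; Lax–Milgram then gives the inverse. This yields $\E_t[\al^N_\cdot]$ on $[t,T]$ as an explicit affine functional of $b_t$ and of the kernel $B$ acting on the still-unknown $\al^N$ on $[0,t]$ — precisely the terms $\boldsymbol D_t^{-1}\mathds 1_{\{t<\cdot\}}\E_t[b_\cdot]$ and $\boldsymbol D_t^{-1}\diamond\mathds 1_{\{t<\cdot\}}B(\cdot,s)$ appearing in $\gamma$ and $E$.

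Next I would substitute this expression for $\int_t^T \bar B(s,t)^\top\E_t[\al^N_s]\,ds$ back into \eqref{eq:fredholm-foc}. After rearranging and multiplying by $\tfrac12\Lambda^{-1}$, the system collapses to a \emph{forward} stochastic Volterra equation of the second kind of the form $\al^N_t = \gamma_t - (\boldsymbol E\al^N)(t)$, with $\gamma$ and the Volterra kernel $E$ as defined in the statement; one checks that $E$ is genuinely a Volterra kernel (the $\mathds 1_{\{t>s\}}$ factor) and lies in $L^2([0,T]^2,\R^{N\times N})$, using square-integrability of $B,\bar B$ and boundedness of the $\boldsymbol D_t^{-1}$. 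Then $(\boldsymbol I^N + \boldsymbol E)$ is invertible on $L^2([0,T],\R^N)$: for a Volterra operator with $L^2$ kernel this is automatic since $\boldsymbol E$ is quasi-nilpotent, so $\boldsymbol I^N + \boldsymbol E$ has a Neumann-type (resolvent) inverse — invoking Theorem~9.2.4 / Prop.~9.2.7 type results from \citet{gripenberg1990volterra}. This gives existence and uniqueness of $\hat\al^N$ and the formula \eqref{eq:finite-player-equilibrium}; a final check confirms $\hat\al^N\in\mathcal A^N$, i.e.\ progressive measurability (inherited from $b$, since all operators act only in time) and square-integrability.

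The main obstacle I anticipate is the careful bookkeeping around the \emph{conditional-expectation / adapted} structure: one must justify that applying $\E_t$ to \eqref{eq:fredholm-foc} on $[t,T]$ really does decouple into a deterministic Fredholm equation for each fixed $t$, that the resulting family $\{\boldsymbol D_t^{-1}\}_{t\in[0,T]}$ depends measurably on $t$ so that $\gamma$ and $E$ are well-defined (measurable, square-integrable) objects, and that reconstituting the solution from its conditional expectations produces an $\mathbb F$-progressively measurable process solving the \emph{original} (non-conditioned) equation $d\P\otimes dt$-a.e. The algebraic manipulations turning the substituted equation into the precise form of $\gamma$ and $E$ are routine but delicate, and I would organize them so that the Volterra structure (hence invertibility of $\boldsymbol I^N+\boldsymbol E$) is manifest.
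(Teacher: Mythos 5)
Your proposal is correct and follows essentially the same route as the paper: reduce to the Fredholm system of Proposition~\ref{prop:fredholm-foc} (with strict concavity supplied as in Lemma~\ref{lemma:concave-finite}), establish coercivity of $\boldsymbol D_t=2\Lambda\boldsymbol I^N+\boldsymbol B_t+\boldsymbol{\bar B}^*_t$ uniformly in $t$, and then solve by the forward--backward decoupling and Volterra-resolvent inversion. The paper simply delegates this last step to Proposition~4.1 of \citet{abi2024optimal} after verifying its coercivity hypothesis (Lemma~\ref{lemma:D_t}), whereas you unpack the cited result's internal mechanics (conditioning on $\mathcal F_t$, solving for $\E_t[\al^N]$ on $[t,T]$ via $\boldsymbol D_t^{-1}$, substituting back to get $\al^N=\gamma-\boldsymbol E\al^N$, and inverting $\boldsymbol I^N+\boldsymbol E$ via the $L^2$ Volterra resolvent); your sketch reproduces the formulas for $\gamma$ and $E$ correctly.

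One small imprecision: your justification of the coercivity of $\boldsymbol D_t$ (``truncation preserves nonnegative-definiteness \ldots so $\boldsymbol D_t-c_0\boldsymbol I^N$ is still nonnegative definite'') only applies Assumption~\ref{assum:nonnegative-definite} to test functions supported on $[t,T]$. For general $f$, writing $f=g+f_t$ with $g=f\mathds{1}_{[0,t]}$ and $f_t=f\mathds{1}_{(t,T]}$, one gets $\langle f,(\boldsymbol D_t-c_0\boldsymbol I^N)f\rangle_{L^2,N}=\langle f_t,(\boldsymbol B+\boldsymbol{\bar B}^*+2\Lambda\boldsymbol I^N-c_0\boldsymbol I^N)f_t\rangle_{L^2,N}+\langle g,(2\Lambda-c_0\operatorname{Id}^N)g\rangle_{L^2,N}$, and the second term is nonnegative only after assuming, without loss of generality by shrinking $c_0$, that $c_0\le 2\min_i\lambda^i$ --- this is exactly the content of the paper's Lemma~\ref{lemma:D_t}. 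With that one-line fix (and the $L^2$-bound on $E$ you already indicate, using $\sup_t\|\boldsymbol D_t^{-1}\|_{\operatorname{op}}\le c_0^{-1}$), your argument is complete.
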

The proof of Theorem~\ref{thm:finite-player-equilibrium} will be given in Section~\ref{sec:proofs-finite}.

\begin{remark} Theorem~\ref{thm:finite-player-equilibrium} yields the explicit finite-player Nash equilibrium for a general class of linear-quadratic non-Markovian network games with heterogeneous interactions in terms of operator resolvents. It allows for progressively measurable idiosyncratic and common noise, while only requiring the minimal coercivity Assumption~\ref{assum:nonnegative-definite} on the interaction operators. Moreover, it generalizes Theorem~2.7 of \citet{abijaber2023equilibrium}, by allowing for heterogeneous player interactions. Finally, it allows to solve various classes of non-Markovian and Markovian network games from the literature as described in Section~\ref{sec:examples}.
\end{remark} 

\section{The Infinite-Player Game}\label{sec:infinite-player-game}
It is natural to consider the infinite-player game which is obtained from sending the number of players in the finite-player game introduced in Section~\ref{subsec:definition-finite-player-game} to infinity. We are particularly interested in infinite-player games, where the interaction of the players is incorporated via a graphon $W:[0,1]^2\to [0,1]$.  Therefore, we  consider in Section~\ref{subsec:definition-finite-player-game-graph} a non-Markovian finite-player game on a weighted graph, which is a special case of the finite-player game introduced in Section~\ref{subsec:definition-finite-player-game}. In Section~\ref{subsec:definition-infinite-player-game}, we then formulate the corresponding non-Markovian infinite-player game on a graphon $W$. Finally, in Section~\ref{subsec:results-infinite-player-game}, we solve the graphon game explicitly (see Theorem~\ref{thm:infinite-player-equilibrium}). Later, in Section~\ref{sec:convergence} we prove the convergence of the finite-player game introduced in Section~\ref{subsec:definition-finite-player-game-graph} to the graphon game. 

\subsection{Definition of the finite-player game on a weighted graph}\label{subsec:definition-finite-player-game-graph}
Let $N\in\N$ be a positive integer. Recalling Assumption~\ref{assum:A-B-C}, assume  that the operators $\boldsymbol{A}^{i}$,  $\boldsymbol{B}^{ij}$, $\boldsymbol{\bar{B}}^{ij}$, $\boldsymbol{C}^{ijk}$ and the processes $b^{ij}$ therein are chosen in such a way that the objective functional in \eqref{eq:J^iN} takes the form
\be
\begin{aligned}\label{eq:J_0^iN}
    J_{0}^{i,N}(\al^{i,N}; \al^{-i,N }) :=\E\Big[ &- \langle \al^{i,N}, (\wt{\boldsymbol A}+\lambda \boldsymbol{I}) \al^{i,N} \rangle_{L^2} -\frac{1}{N}\sum_{j=1}^N w_{ij}^N\langle \al^{i,N},\big(\wt{\boldsymbol B}+\wt{\boldsymbol B}^*\big) \al^{j,N} \rangle_{L^2}   \\ 
    &- \frac{1}{N^2}\sum_{j,k=1}^Nw_{ij}^Nw_{ik}^N\langle \al^{j,N},\wt{\boldsymbol{C}}\al^{k,N} \rangle_{L^2}
    + \langle b^{i,N}, \al^{i,N} \rangle_{L^2} \\&+\frac{1}{N}\sum_{j=1}^N w_{ij}^N\langle b^{*,N}, \al^{j,N}\rangle_{L^2} + c^{i,N} \Big], 
\end{aligned}\ee
so that the following special case of Assumption~\ref{assum:A-B-C} is satisfied. 
\begin{assumption}\label{assum:A-B-C-graph}
Assume that $\wt{\boldsymbol A}$, $\wt{\boldsymbol B}$, $\wt{\boldsymbol C}$ are admissible operators induced by Volterra kernels $\wt{A},\wt{B},\wt{C}\in\mathcal{G}$.
Assume that $b^{i,N}$~and $b^{*,N}$ are $\F$-progressively measurable processes in $L^2(\Omega\times [0,T],\R)$ incorporating idiosyncratic and common noise, respectively, and that $c^{i,N}$ are integrable $\mathcal{F}_T$-random variables. Moreover, let $\lambda>0$ be a fixed constant, and assume that weights 
$w_{ij}^N\in[0,1]$ which measures player $i$'s sensitivity to player $j$'s control satisfy $w_{ij}^N=w_{ji}^N$ and $w_{ii}^N=0$ for all $i,j=1,\ldots,N$.
\end{assumption}
For fixed $w^N\vcentcolon=(w_{ij}^N)\in[0,1]^{N\times N}$ we introduce the notation 
\be\label{eq:aggregate}
\bar\al^{i,N}\vcentcolon=\frac{1}{N}\sum_{j=1}^N w_{ij}^N\al^{j,N},\quad i=1,\ldots,N,
\ee
which allows us to rewrite the objective functional in \eqref{eq:J_0^iN} in concise form,
\be
\begin{aligned}\label{eq:J_0^iN-rewritten}
J_{0}^{i,N}(\al^{i,N}; \al^{-i,N }) =\E\Big[ &- \langle \al^{i,N}, (\wt{\boldsymbol A}+\lambda \boldsymbol{I}) \al^{i,N} \rangle_{L^2} -\langle \al^{i,N},\big(\wt{\boldsymbol B}+\wt{\boldsymbol B}^*\big) \bar\al^{i,N} \rangle_{L^2}   \\ 
&- \langle \bar\al^{i,N},\wt{\boldsymbol{C}}\bar\al^{i,N} \rangle_{L^2}
+ \langle b^{i,N}, \al^{i,N} \rangle_{L^2} +\langle b^{*,N}, \bar\al^{i,N}\rangle_{L^2} + c^{i,N} \Big].
\end{aligned}\ee
From \eqref{eq:J_0^iN-rewritten} is follows that the objective functional of each player is linear-quadratic in the player's control and in the local aggregate of the controls $\bar\al^{\cdot,N}$. 
\begin{remark}
Note that in \eqref{eq:J_0^iN-rewritten} we assume the operators $\wt{\boldsymbol A}$, $\wt{\boldsymbol B}$, $\wt{\boldsymbol C}$ and $\lam>0$ to be identical for all players and not depending on $N$, but allow for heterogeneous players' sensitivity $w^{\cdot,N}$, idiosyncratic noise $b^{\cdot,N}$, the common noise $b^{*,N}$ and $c^{\cdot,N}$ that may change with $N$.
\end{remark}
Assumption~\ref{assum:nonnegative-definite}, which is needed to solve the finite-player game by means of Theorem~\ref{thm:finite-player-equilibrium}, simplifies to the following assumption after recalling Remark \ref{rem:nonnegative-operators}.
\begin{assumption}\label{assum:nonnegative-definite-graph}
Assume that there exists a constant $c_0>0$ such that  
\be
\langle f, \lambda f +\wt{\boldsymbol{A}}f+ \frac{1}{N}w^N\cdot\wt{\boldsymbol{B}}f-c_0 f\rangle_{L^2,N}\geq 0,\quad \text{for all } f\in L^2([0,T],\R^N),
\ee
where the operators $\wt{\boldsymbol{A}}$ and $\wt{\boldsymbol{B}}$ on $L^2([0,T],\R)$ are applied component-wise.
\end{assumption}
Finally, the central Theorem~\ref{thm:finite-player-equilibrium} simplifies as follows.
\begin{corollary} \label{cor:finite-player-equilibrium}
    Under Assumptions~\ref{assum:A-B-C-graph} and \ref{assum:nonnegative-definite-graph}, there exists a unique Nash equilibrium $\hat{\al}^N=(\hat{\al}^{1,N},\ldots, \hat{\al}^{N,N})\in\mathcal{A}^N$ of the finite-player game on the weighted graph with adjacency matrix $w^N$. It is given explicitly by equation \eqref{eq:finite-player-equilibrium}, where 
    \be\label{eq:correspondence-weighted}
    B=\bar{B}=\operatorname{Id}^N\cdot \wt{A}+\frac{1}{N}w^N\cdot \wt{B},\qquad
    \Lambda = \lam\cdot \operatorname{Id}^N,\qquad b=(b^{1,N},\ldots,b^{N,N}),
    \ee
    and $\operatorname{Id}^N$ denotes the $N$-dimensional identity matrix.
\end{corollary}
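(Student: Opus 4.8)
The plan is to derive Corollary~\ref{cor:finite-player-equilibrium} as a direct specialization of Theorem~\ref{thm:finite-player-equilibrium}. The main task is to verify that the objective functional \eqref{eq:J_0^iN} is indeed a special case of the general functional \eqref{eq:J^iN}, i.e.\ to exhibit the correspondence between the operators and processes of Assumption~\ref{assum:A-B-C} and those of Assumption~\ref{assum:A-B-C-graph}. Concretely, I would set, for $i,j,k\in\{1,\ldots,N\}$ with $i\neq j$, $i\neq k$,
\be
\boldsymbol{B}^{ii}=\wt{\boldsymbol{A}},\quad \lambda^i=\lambda,\quad \boldsymbol{B}^{ij}=\boldsymbol{\bar{B}}^{ij}=\tfrac{1}{N}w_{ij}^N(\wt{\boldsymbol{B}}+\wt{\boldsymbol{B}}^*)\ \ (\text{symmetric in the relevant sense}),\quad \boldsymbol{C}^{ijk}=\tfrac{1}{N^2}w_{ij}^Nw_{ik}^N\wt{\boldsymbol{C}},
\ee
together with $b^{ii}=b^{i,N}$, $b^{ij}=\tfrac{1}{N}w_{ij}^N b^{*,N}$ and $c^i=c^{i,N}$; one must check that, after substituting these into \eqref{eq:J^iN} and using $\langle\al^{i,N},\wt{\boldsymbol{B}}^*\al^{j,N}\rangle_{L^2}=\langle\al^{j,N},\wt{\boldsymbol{B}}\al^{i,N}\rangle_{L^2}$ and the symmetry $w_{ij}^N=w_{ji}^N$, one recovers exactly \eqref{eq:J_0^iN}. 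The admissibility of $\wt{A},\wt{B},\wt{C}\in\mathcal{G}$ and $w_{ij}^N\in[0,1]$ guarantees that all the induced operators are admissible Volterra operators, so Assumption~\ref{assum:A-B-C} holds.

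Next I would check that the hypotheses of Theorem~\ref{thm:finite-player-equilibrium} are met. Strict concavity of $\al^{i,N}\mapsto J^{i,N}(\al^{i,N};\al^{-i,N})$ and the coercivity Assumption~\ref{assum:nonnegative-definite} both follow from Assumption~\ref{assum:nonnegative-definite-graph}: with the above identifications, the block kernel $B$ of \eqref{eq:B-Bbar} becomes $B=\bar{B}=\operatorname{Id}^N\cdot\wt{A}+\tfrac{1}{N}w^N\cdot\wt{B}$ and $\Lambda=\lambda\operatorname{Id}^N$, so that, invoking Remark~\ref{rem:nonnegative-operators} (parts (i)--(iii)), the operator $\boldsymbol{B}+\boldsymbol{\bar{B}}^*+2\Lambda\boldsymbol{I}^N-c_0\boldsymbol{I}^N$ is nonnegative definite precisely when $\lambda f+\wt{\boldsymbol{A}}f+\tfrac{1}{N}w^N\cdot\wt{\boldsymbol{B}}f-c_0 f$ pairs nonnegatively against $f$ in $L^2([0,T],\R^N)$, which is exactly Assumption~\ref{assum:nonnegative-definite-graph} (here one uses that $\langle f,\wt{\boldsymbol{B}}^*f\rangle=\langle f,\wt{\boldsymbol{B}}f\rangle$ componentwise and symmetry of $w^N$ to combine the two $\wt{\boldsymbol{B}}$-terms). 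Strict concavity in $\al^{i,N}$ then follows because the diagonal part contributes $-\lambda^i\|\al^{i,N}\|_{L^2}^2-\langle\al^{i,N},\boldsymbol{B}^{ii}\al^{i,N}\rangle_{L^2}$ and $\lambda>0$ dominates after the coercivity bound; alternatively it is subsumed in the argument that Theorem~\ref{thm:finite-player-equilibrium} already assumes.

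With both assumptions verified, Theorem~\ref{thm:finite-player-equilibrium} applies verbatim and yields the unique Nash equilibrium $\hat{\al}^N$ given by \eqref{eq:finite-player-equilibrium}, with the data $B,\bar{B},\Lambda,b$ as in \eqref{eq:correspondence-weighted}. I do not expect any analytic difficulty here; the only slightly delicate point is bookkeeping: one must handle the $i=j$ and $i=k$ cases separately (where the coefficient is $\wt{\boldsymbol{A}}+\lambda\boldsymbol{I}$ or one of the cross terms) versus the generic $j,k\neq i$ case (the $\wt{\boldsymbol{C}}$ term), and confirm that the factor-of-two and the $w^N$-weights line up so that \eqref{eq:J^iN} collapses to \eqref{eq:J_0^iN} and ultimately to the compact form \eqref{eq:J_0^iN-rewritten} via \eqref{eq:aggregate}. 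Thus the main obstacle is purely notational rather than conceptual: carefully matching the off-diagonal kernels $B^{ij},\bar{B}^{ji}$ and the triple-indexed kernels $C^{ijk}$ to the single graphon-type operators $\wt{\boldsymbol{B}},\wt{\boldsymbol{C}}$ scaled by the appropriate powers of $1/N$ and products of weights.
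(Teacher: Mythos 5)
Your overall route---matching the graph-game data to Assumption~\ref{assum:A-B-C}, showing Assumption~\ref{assum:nonnegative-definite-graph} gives Assumption~\ref{assum:nonnegative-definite} via Remark~\ref{rem:nonnegative-operators} and the symmetry of $w^N$, and then invoking Theorem~\ref{thm:finite-player-equilibrium}---is exactly the (implicit) argument behind the corollary in the paper, which states it as a direct specialization. However, your explicit identification of the off-diagonal kernels is wrong as written: you set $\boldsymbol{B}^{ij}=\boldsymbol{\bar B}^{ij}=\tfrac{1}{N}w_{ij}^N(\wt{\boldsymbol B}+\wt{\boldsymbol B}^*)$. First, $\wt{B}^*$ is supported on $\{s\geq t\}$, so $\wt{B}+\wt{B}^*$ is not a Volterra kernel in $\mathcal G$, and this choice violates Assumption~\ref{assum:A-B-C}, which requires $\boldsymbol{B}^{ij}$ and $\boldsymbol{\bar B}^{ji}$ to be admissible Volterra operators (Definition~\ref{def:admissible-volterra-operator}). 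Second, with this choice the coefficient appearing in \eqref{eq:J^iN} would be $\boldsymbol{B}^{ij}+(\boldsymbol{\bar B}^{ji})^*=\tfrac{2}{N}w_{ij}^N(\wt{\boldsymbol B}+\wt{\boldsymbol B}^*)$, i.e.\ twice the cross term of \eqref{eq:J_0^iN}, and the block kernel of \eqref{eq:B-Bbar} would be $\operatorname{Id}^N\cdot\wt A+\tfrac{1}{N}w^N\cdot(\wt B+\wt B^*)$, contradicting \eqref{eq:correspondence-weighted}, which you nevertheless use in the second half of your argument.

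The correct bookkeeping is $\boldsymbol{B}^{ii}=\boldsymbol{\bar B}^{ii}=\wt{\boldsymbol A}$, $\lambda^i=\lambda$, and $\boldsymbol{B}^{ij}=\boldsymbol{\bar B}^{ij}=\tfrac{1}{N}w_{ij}^N\wt{\boldsymbol B}$ for $i\neq j$; then, using $w_{ij}^N=w_{ji}^N$, one gets $\boldsymbol{B}^{ij}+(\boldsymbol{\bar B}^{ji})^*=\tfrac{1}{N}w_{ij}^N(\wt{\boldsymbol B}+\wt{\boldsymbol B}^*)$, which is exactly the cross term of \eqref{eq:J_0^iN}, all kernels are admissible Volterra kernels, and $B=\bar B=\operatorname{Id}^N\cdot\wt A+\tfrac{1}{N}w^N\cdot\wt B$, $\Lambda=\lambda\operatorname{Id}^N$, $b=(b^{1,N},\ldots,b^{N,N})$ as claimed (your choices $\boldsymbol{C}^{ijk}=\tfrac{1}{N^2}w_{ij}^Nw_{ik}^N\wt{\boldsymbol C}$, $b^{ii}=b^{i,N}$, $b^{ij}=\tfrac1N w_{ij}^Nb^{*,N}$, $c^i=c^{i,N}$ are fine, since $w_{ii}^N=0$). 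With this correction your verification that Assumption~\ref{assum:nonnegative-definite-graph} implies Assumption~\ref{assum:nonnegative-definite} (with constant $2c_0$, say, using $\langle f,w^N\cdot\wt{\boldsymbol B}^*f\rangle_{L^2,N}=\langle f,w^N\cdot\wt{\boldsymbol B}f\rangle_{L^2,N}$ for symmetric $w^N$) is sound, and Theorem~\ref{thm:finite-player-equilibrium} applies as you say; note also that strict concavity need not be checked separately, since Theorem~\ref{thm:finite-player-equilibrium} only requires Assumptions~\ref{assum:A-B-C} and \ref{assum:nonnegative-definite}.
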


\subsection{Definition of the infinite-player graphon game}\label{subsec:definition-infinite-player-game}
We will label the agents amid the unit interval by $u\in [0,1]$. Inspired by \citet{aurell2022stochastic} and \citet{tangpi2024optimal}, the infinite-player graphon game will be modeled by the following setup.
Let $\mathcal{B}_{[0,1]}$ be the Borel $\sigma$-algebra of $[0,1]$, and $\mu_{[0,1]}$ denote the Lebesgue measure on $[0,1]$. Let $(\Omega,\mathcal{F},\P)$ be the sample space and $([0,1],\mathcal{I},\mu)$ be a probability space that extends the Lebesgue measure space $([0,1],\mathcal{B}_{[0,1]},\mu_{[0,1]})$. We will consider a rich Fubini extension $([0,1]\times\Omega,\mathcal{I}\boxtimes \mathcal{F},\mu\boxtimes\P)$ of the standard product space $([0,1]\times\Omega,\mathcal{I}\otimes \mathcal{F},\mu\otimes\P)$. We refer to \citet{sun2006exact} for a detailed discussion of the theory of (rich) Fubini extensions and \citet{sun2009individual} for particular results on their existence. Namely, by Theorem~1 in \citet{sun2009individual}, it is possible to define $\mathcal{I}\boxtimes \mathcal{F}$-measurable processes $(b,c): [0,1]\times\Omega \to L^2([0,T],\R)\times\R$ such that the random variables $(b^u,c^u)_{u\in [0,1]}$ are essentially pairwise independent, and for each $u\in [0,1]$, the process $b^u=(b_t^u)_{0\leq t\leq T}$ is a stochastic process in $L^2(\Omega\times [0,T],\R)$ and $c^u$ is an integrable random variable on $(\Omega,\mathcal{F},\P)$. Here the pairs $(b^u,c^u)$ have to satisfy 
the technical condition that the map from $([0,1],\mathcal{I},\mu)$ to the space of Borel probability measures on $L^2([0,T],\R)\times\R$ which assigns to each $u\in [0,1]$ the distribution of $(b^u,c^u)$ is measurable. Finally, assume that there is an independent stochastic process $b^*$ in $L^2(\Omega\times [0,T],\R)$ incorporating common noise.
\begin{remark}
By Definition~2.2 in \citet{sun2006exact}, the Fubini property holds on the rich Fubini extension $([0,1]\times\Omega,\mathcal{I}\boxtimes \mathcal{F},\mu\boxtimes\P)$. More precisely, for any $\mu\boxtimes\P$-integrable function $f:[0,1]\times\Omega\to\R$ it holds that
\be
\int_{[0,1]\times\Omega}f(u,\omega)(\mu\boxtimes\P)(du,d\omega)=\int_0^1\E[f(u)]\mu(du)=\E\Big[\int_0^1f(u)\mu(du)\Big].
\ee
Also see Lemma~2.3 in \citet{sun2006exact} for a generalized Fubini property. 
We will often denote $\mu(du)=du$ for ease of notation and will tacitly employ the Fubini property without referring to this remark again.
\end{remark}
Throughout the rest of Section~\ref{sec:infinite-player-game}, denote by $\F\vcentcolon=(\mathcal{F}_t)_{0\leq t\leq T}$ the augmentation of the filtration generated by $((b^u)_{u\in [0,1]},(c^u)_{u\in [0,1]},b^*)$.
\begin{definition}
    A strategy profile is a family $(\al^u)_{u\in [0,1]}$ of $\F$-progressively measurable processes such that the map $(u,t,\omega)\to\al_t^u(\omega)$ is $\mathcal{I}\otimes\mathcal{B}([0,T])\otimes\mathcal{F}$-measurable.
Define the class of admissible strategy profiles $(\al^u)_{u\in [0,1]}$ as
\be
\mathcal{A}^\infty\vcentcolon=\left\{(\al^u)_{u\in [0,1]}:\int_0^T\E[(\al_t^u)^2]dt<\infty \ \text{for $\mu$-a.e. }u, \int_0^1\int_0^T\E[(\al_t^u)^2]dtdu<\infty\right\}.
\ee
\end{definition}
Slightly abusing notation, we will sometimes write $(\al^{v})_{v\neq u}\in\mathcal{A}^\infty$ for an admissible strategy profile $(\al^{v})_{v\in [0,1]}$ and $u\in [0,1]$.
Next, in order to incorporate interaction among the players, we fix a graphon $W$, i.e. a Borel-measurable and symmetric function $W:[0,1]\times [0,1]\to [0,1]$. Notice that in accordance with the theory from Section~\ref{subsec:function-spaces}, $W$ defines a self-adjoint bounded linear integral operator $\boldsymbol{W}:L^2([0,1],\R)\to L^2([0,1],\R)$ by
\be\label{eq:graphon-operator}
(\boldsymbol{W}f)(u)\vcentcolon=\int_0^1 W(u,v)f(v)dv,\quad f\in L^2([0,1],\R).
\ee
\begin{remark}\label{rem:spectral-decomposition}
As noted by \citet{lovasz2012large}, the operator $\boldsymbol{W}:L^2([0,1],\R)\to L^2([0,1],\R)$ is a  Hilbert-Schmidt operator and thus diagonizable. That is, $\boldsymbol{W}$ has countably many eigenvalues, all of which are real and can be ordered as $\vartheta_1\geq\vartheta_2\geq \ldots$. Moreover, there exists an orthonormal basis of $L^2([0,1],\R)$ of corresponding eigenfunctions $(\varphi_j)_{j\in\N}$, which implies that any function $f\in L^2([0,1],\R)$ can be decomposed as 
$$
f(u)=\sum_{j=1}^\infty \langle f,\varphi_j\rangle_{L^2([0,1],\R)}\varphi_j(u),\quad u\in [0,1].
$$
In particular, it follows that 
\be\label{eq:representation-Wf}
(\boldsymbol{W}f)(u)=\sum_{j=1}^\infty\vartheta_j\langle f,\varphi_j\rangle_{L^2([0,1],\R)}\varphi_j(u),\quad u\in [0,1].
\ee
\end{remark}
The following assumption is the graphon game analogue of Assumption~\ref{assum:A-B-C-graph}.

\begin{assumption}\label{assum:A-B-C-graphon}
Assume that $\wt{\boldsymbol A}$, $\wt{\boldsymbol B}$, $\wt{\boldsymbol C}$ are admissible operators induced by Volterra kernels $\wt{A},\wt{B},\wt{C}\in\mathcal{G}$ satisfying the technical integrability condition
\be\label{eq:sup-A+B-condition}
\sup_{0\leq t\leq T}\int_0^T|\wt{A}(t,s)|^2+|\wt{B}(t,s)|^2ds+\sup_{0\leq s\leq T}\int_0^T|\wt{A}(t,s)|^2+|\wt{B}(t,s)|^2dt<\infty.
\ee
For $u\in I$, assume that $b^u$ and $c^u$ are  essentially pairwise independent \mbox{$\F$-progressively} measurable processes in $L^2(\Omega\times [0,T],\R)$ and integrable $\mathcal{F}_T$-random variables, respectively, and assume that $b=(b_t^u)\in L^2( \Omega\times[0,T]\times [0,1],\R)$, where the map which assigns to each $u\in [0,1]$ the distribution of $(b^u,c^u)$ should be measurable. Moreover, let $b^*$ be an independent $\F$-progressively measurable process in $L^2(\Omega\times [0,T],\R)$ incorporating common noise. Finally, let $\lambda>0$ be a fixed constant, and assume that 
$W:[0,1]^2\to[0,1]$ is a graphon inducing the operator $\boldsymbol{W}$.
\end{assumption}
Recall the objective functionals of the finite-player game in \eqref{eq:J_0^iN-rewritten}. Under Assumption~\ref{assum:A-B-C-graphon}, the objective functionals of the corresponding infinite-player graphon game are defined as follows,
    \be
    \begin{aligned}\label{eq:J^uW}
    J^{u,W}(\al^{u}; (\al^{v})_{v\neq u}) \vcentcolon=\E\Big[ &- \langle \al^u, (\wt{\boldsymbol A}+\lambda \boldsymbol{I}) \al^u \rangle_{L^2} -\langle \al^u,\big(\wt{\boldsymbol B}+\wt{\boldsymbol B}^*\big) (\boldsymbol{W}\al)(u) \rangle_{L^2}   \\ 
    &- \langle (\boldsymbol{W}\al)(u),\smash{\wt{\boldsymbol{C}}}(\boldsymbol{W}\al)(u) \rangle_{L^2}
    + \langle b^u, \al^u \rangle_{L^2} \\&+\langle b^*, (\boldsymbol{W}\al)(u)\rangle_{L^2} + c^u \Big], \quad u\in [0,1]. 
\end{aligned}\ee
\begin{definition} \label{def:Nash-infinite-player}
An admissible strategy profile $(\hat{\al}^{u})_{u\in [0,1]}\in\mathcal{A}^\infty$ is called a graphon game Nash equilibrium  if for $\mu$-almost every $u\in [0,1]$ the control $\hat{\al}^u$ solves the optimization problem 
\be
\underset{\al^u}\max \ J^{u,W}(\al^{u}; (\hat{\al}^{v})_{v\neq u}),
\ee
where maximization takes place over the set of $\F$-progressively measurable processes $\al^u$ in $L^2(\Omega\times [0,T],\R)$. 
\end{definition}

\subsection{Main results for the graphon game}\label{subsec:results-infinite-player-game}
We can characterize the Nash equilibrium in the graphon game by a an infinite-dimensional stochastic Fredholm equation with forward and backwards components as follows.
\begin{proposition}\label{prop:fredholm-foc-infinite}
Let Assumption~\ref{assum:A-B-C-graphon} be satisfied and assume that $\wt{\boldsymbol A}$ is nonnegative definite. Then,  an admissible strategy profile $\al=(\al^{u})_{u\in [0,1]}\in\mathcal{A}^\infty$ is a graphon game Nash equilibrium if and only if it satisfies the  infinite-dimensional coupled system of stochastic Fredholm integral equation of the second kind  given by,
\be\begin{aligned} \label{eq:fredholm-foc-infinite} 
2\lam \al_t^u&=b_t^u - \int_0^t \wt{A}(t,s)\al^u_sds- \int_t^T \wt{A}(s,t)\E_t[\al^u_s]ds- \int_0^t\wt{B}(t,s)  \int_0^1 W(u,v)\al^v_sdvds\\
&- \int_t^T\wt{B}(s,t)  \int_0^1 W(u,v)\E_t[\al^v_s]dvds,\quad d\P \otimes dt\otimes d\mu \textrm{-a.e.~on } \Omega \times [0,T]\times [0,1].
\end{aligned}\ee
\end{proposition}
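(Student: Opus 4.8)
The plan is to derive the first-order condition by a standard variational (Gâteaux derivative) argument, exploiting the linear-quadratic structure of $J^{u,W}$ together with the concavity coming from $\lambda>0$ and the nonnegative definiteness of $\wt{\boldsymbol A}$, and then to convert the resulting stationarity equation into the Fredholm form \eqref{eq:fredholm-foc-infinite}. First I would fix $u\in[0,1]$, freeze the other players' strategies at the equilibrium profile $(\hat\al^v)_{v\neq u}$, and for an arbitrary admissible perturbation $\beta\in L^2(\Omega\times[0,T],\R)$ consider $\varepsilon\mapsto J^{u,W}(\al^u+\varepsilon\beta;(\hat\al^v)_{v\neq u})$. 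Since only the $u$-th player's control is varied, the term $\langle(\boldsymbol W\al)(u),\wt{\boldsymbol C}(\boldsymbol W\al)(u)\rangle_{L^2}$ is affected only through the single summand $v=u$ inside $(\boldsymbol W\al)(u)=\int_0^1 W(u,v)\al^v dv$, which has zero $\mu$-measure; hence that quadratic-in-aggregate term and the $\langle b^*,(\boldsymbol W\al)(u)\rangle_{L^2}$ term contribute nothing to the derivative in $\varepsilon$. This is exactly the infinite-population analogue of the ``negligible own weight'' phenomenon and is where the graphon formulation simplifies things relative to the finite-$N$ case.

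Computing $\frac{d}{d\varepsilon}\big|_{\varepsilon=0}$, the quadratic terms $-\langle\al^u,(\wt{\boldsymbol A}+\lambda\boldsymbol I)\al^u\rangle_{L^2}$ produce $-2\langle\beta,(\wt{\boldsymbol A}+\lambda\boldsymbol I)\al^u\rangle_{L^2}$ after symmetrizing via Remark~\ref{rem:nonnegative-operators} (so that $\wt{\boldsymbol A}$ may be replaced by $\tfrac12(\wt{\boldsymbol A}+\wt{\boldsymbol A}^*)$), the bilinear term $-\langle\al^u,(\wt{\boldsymbol B}+\wt{\boldsymbol B}^*)(\boldsymbol W\al)(u)\rangle_{L^2}$ produces $-\langle\beta,(\wt{\boldsymbol B}+\wt{\boldsymbol B}^*)(\boldsymbol W\al)(u)\rangle_{L^2}$, and the linear term $\langle b^u,\al^u\rangle_{L^2}$ produces $\langle b^u,\beta\rangle_{L^2}$. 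Setting the derivative to zero for all $\beta$, and using that $\beta$ ranges over all progressively measurable square-integrable processes, one gets (after taking conditional expectations $\E_t$, which is the standard device for handling the non-adaptedness of the adjoint kernel terms — cf. the backward integrals in \eqref{eq:fredholm-foc}) the pointwise identity
\be
2\lambda\al_t^u = b_t^u - \big(\wt{\boldsymbol A}\al^u\big)(t) - \big(\wt{\boldsymbol A}^*\al^u\big)(t)\text{-type terms} - \big(\wt{\boldsymbol B}(\boldsymbol W\al)(u)\big)(t) - \big(\wt{\boldsymbol B}^*(\boldsymbol W\al)(u)\big)(t),
\ee
which, upon writing out each operator as its kernel integral and splitting the adjoint operators $\wt{\boldsymbol A}^*,\wt{\boldsymbol B}^*$ into a forward part on $[0,t]$ and a backward part on $[t,T]$ carrying the conditional expectation $\E_t$, is precisely \eqref{eq:fredholm-foc-infinite}. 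The interchange of $\int_0^1 W(u,v)\,dv$ with the time integrals and with $\E_t$ is justified by the Fubini extension property and the integrability hypotheses in Assumption~\ref{assum:A-B-C-graphon}, in particular \eqref{eq:sup-A+B-condition} and $b\in L^2(\Omega\times[0,T]\times[0,1],\R)$.

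For the converse direction I would use concavity: under $\lambda>0$ and $\wt{\boldsymbol A}$ nonnegative definite, each map $\al^u\mapsto J^{u,W}(\al^u;(\hat\al^v)_{v\neq u})$ is strictly concave (the second Gâteaux derivative equals $-2\langle\beta,(\wt{\boldsymbol A}+\lambda\boldsymbol I)\beta\rangle_{L^2}\le -2\lambda\|\beta\|_{L^2}^2<0$ for $\beta\neq0$), so a strategy profile is a Nash equilibrium if and only if the first-order condition holds for $\mu$-a.e.\ $u$; this gives both implications simultaneously. The main obstacle is measurability and integrability bookkeeping: one must check that the candidate equilibrium $(\al^u)_{u\in[0,1]}$ and all the objects appearing in \eqref{eq:fredholm-foc-infinite} — especially $v\mapsto\al^v$ and its aggregate $(\boldsymbol W\al)(u)$ — remain jointly measurable and square-integrable in $(u,t,\omega)$, so that the objective functionals are well-defined and the Fubini exchanges are legitimate; the condition \eqref{eq:sup-A+B-condition} is there precisely to control $\sup_u\|(\boldsymbol W\al)(u)\|$-type quantities and keep everything inside $L^2([0,1]\times[0,T]\times\Omega)$. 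Everything else is a routine adaptation of the finite-dimensional argument behind Proposition~\ref{prop:fredholm-foc}.
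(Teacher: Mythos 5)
Your proposal is correct and follows essentially the same route as the paper: strict concavity of $\al^u\mapsto J^{u,W}$ (from $\lambda>0$ and the nonnegative definiteness of $\wt{\boldsymbol A}$) reduces the Nash property to the vanishing of the G\^ateaux derivative, the aggregate $(\boldsymbol{W}\al)(u)$ is unaffected by a single player's deviation so the $\wt{\boldsymbol C}$ and $b^*$ terms drop out, and conditioning on $\mathcal F_t$ with the tower property turns the stationarity condition into the forward--backward Fredholm equation, collected over $\mu$-a.e.\ $u$. Only a cosmetic remark: since $\wt A,\wt B$ are Volterra kernels, $\wt{\boldsymbol A}^*$ and $\wt{\boldsymbol B}^*$ already \emph{are} the backward terms on $[t,T]$ (the forward terms on $[0,t]$ come from $\wt{\boldsymbol A}$, $\wt{\boldsymbol B}$ themselves), so no further splitting of the adjoints is needed.
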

The proof of Proposition~\ref{prop:fredholm-foc-infinite} is similar to the proof of Proposition~\ref{prop:fredholm-foc} and deferred to the Appendix \ref{appendix}. 

The following assumption is the graphon game analogue of Assumption~\ref{assum:nonnegative-definite-graph}.
\begin{assumption}\label{assum:nonnegative-definite-graphon}
Assume that the linear operator $\wt{\boldsymbol A}$ on $L^2([0,T],\R)$ is nonnegative definite, and that there exists a constant $ c_W>0$ such that 
$$
\langle g, \lam g+\wt{\boldsymbol A} g + \wt{\boldsymbol B}(\boldsymbol W g)- c_W g\rangle_{L^2([0,T]\times [0,1],\R)}\geq 0,\quad\text{for all }g\in L^2([0,T]\times [0,1],\R).
$$
\end{assumption}

We now state the main theorem of this section, which solves the graphon game. We recall from Remark \ref{rem:spectral-decomposition} that $(\varphi_i)_{i\in\N}$ is an orthonormal basis of $L^2([0,1],\R)$ consisting of eigenfunctions of $\boldsymbol{W}$ together with a nonincreasing sequence of corresponding eigenvalues $(\vartheta_i)_{i\in\N}$ arising from the spectral decomposition of $\boldsymbol{W}$ from \eqref{eq:representation-Wf}.
\begin{theorem}\label{thm:infinite-player-equilibrium} Let Assumptions~\ref{assum:A-B-C-graphon} and \ref{assum:nonnegative-definite-graphon} be satisfied. Then there exists a unique graphon game Nash equilibrium $\hat{\al}\in\mathcal{A}^\infty$ of the infinite-player graphon game given by 
\be
\hat{\al}_t^u = \sum_{i=1}^\infty \varphi_i(u) \big((\boldsymbol{I} + \boldsymbol E^i)^{-1} \gamma^i \big) (t),\quad 0\leq t\leq T,\ 0\leq u\leq 1,
\ee
where for every $i\in\N$,
\be \label{eq:gamma^i}
\gamma_t^i \vcentcolon=   \wt{b}^i_t  -  \langle  \mathds{1}_{\{t< \cdot\}}  \wt{C}^i(\cdot,t),  ({\boldsymbol D}_t^i)^{-1}\mathds{1}_{\{t< \cdot\}} \E_t[\wt{b}^i_{\cdot}] \rangle_{L^2},
\ee
\be\label{eq:E^i}
E^i(t,s) \vcentcolon= - \mathds{1}_{\{t>s\}}  \big( \langle  \mathds{1}_{\{t< \cdot\}}   \wt{C}^i(\cdot,t), ({\boldsymbol D}_t^i)^{-1}   \mathds{1}_{\{t< \cdot\}}  \wt{C}^i(\cdot,s)  \rangle_{L^2}    -   \wt{C}^i(t,s)   \big),
\ee
\be
\wt{b}^i_t\vcentcolon=\int_0^1\varphi_i(u)b_t^udu,\quad
\wt{C}^i\vcentcolon=\frac{1}{2\lam}\big(\wt{A}+\vartheta_i\wt{B}\big),\quad
{\boldsymbol{D}}^i_t \vcentcolon= \boldsymbol{I}  + \wt{\boldsymbol{C}}^i_t + (\wt{\boldsymbol{C}}^i_t)^*.
\ee
\end{theorem}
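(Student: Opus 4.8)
The strategy is to diagonalize the infinite-dimensional Fredholm system \eqref{eq:fredholm-foc-infinite} via the spectral decomposition of $\boldsymbol{W}$, thereby reducing it to a countable family of decoupled scalar stochastic Fredholm equations, each of which has the structure already solved in the finite-dimensional homogeneous setting, and then reassemble the solution. First I would invoke Proposition~\ref{prop:fredholm-foc-infinite}: under Assumption~\ref{assum:A-B-C-graphon} and nonnegative definiteness of $\wt{\boldsymbol A}$, a strategy profile $(\al^u)_{u\in[0,1]}\in\mathcal{A}^\infty$ is a Nash equilibrium if and only if it solves \eqref{eq:fredholm-foc-infinite}, so it suffices to show \eqref{eq:fredholm-foc-infinite} has a unique solution in $\mathcal{A}^\infty$ and that it is given by the stated formula. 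For each fixed $t$, $\omega$, the map $u\mapsto\al_t^u$ lies in $L^2([0,1],\R)$, so by Remark~\ref{rem:spectral-decomposition} we may expand $\al_t^u=\sum_{i=1}^\infty a_t^i\varphi_i(u)$ with $a_t^i\vcentcolon=\langle\al_t^\cdot,\varphi_i\rangle_{L^2([0,1],\R)}=\int_0^1\varphi_i(u)\al_t^u du$; similarly write $b_t^u=\sum_i\wt{b}_t^i\varphi_i(u)$. Projecting \eqref{eq:fredholm-foc-infinite} onto $\varphi_i$ and using $(\boldsymbol{W}\al)(u)=\sum_i\vartheta_i a_t^i\varphi_i(u)$ from \eqref{eq:representation-Wf}, the coupling term $\int_0^1 W(u,v)\al_s^v dv$ becomes $\sum_i\vartheta_i a_s^i\varphi_i(u)$, and orthonormality kills all cross terms. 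This yields, for each $i\in\N$, the scalar equation
\be
2\lam a_t^i = \wt{b}_t^i - \int_0^t\big(\wt{A}(t,s)+\vartheta_i\wt{B}(t,s)\big)a_s^i\,ds - \int_t^T\big(\wt{A}(s,t)+\vartheta_i\wt{B}(s,t)\big)\E_t[a_s^i]\,ds,
\ee
i.e., dividing by $2\lam$ and recalling $\wt{C}^i=\frac{1}{2\lam}(\wt{A}+\vartheta_i\wt{B})$, exactly a scalar stochastic Fredholm equation of the second kind with forward kernel $\wt{C}^i(t,s)\mathds{1}_{\{s<t\}}$ and backward kernel $\wt{C}^i(s,t)\mathds{1}_{\{s>t\}}$.

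Next I would solve each scalar equation by the resolvent technique of \citet{abijaber2023equilibrium}, \citet{abi2024optimal} (the $n=1$ case of the machinery underlying Theorem~\ref{thm:finite-player-equilibrium}), which gives $a_\cdot^i=(\boldsymbol{I}+\boldsymbol{E}^i)^{-1}\gamma^i$ with $\gamma^i$, $E^i$, $\boldsymbol{D}_t^i$ as in the statement; invertibility of $\boldsymbol{I}+\boldsymbol{E}^i$ and of $\boldsymbol{D}_t^i$ follows from the coercivity built into Assumption~\ref{assum:nonnegative-definite-graphon}. The key point requiring care is uniformity in $i$: since $\vartheta_i\in[-1,1]$ (actually $\sum\vartheta_i^2<\infty$, but $|\vartheta_i|\le\|\boldsymbol{W}\|_{op}\le 1$ suffices), the kernels $\wt{C}^i$ are uniformly bounded in $L^2([0,T]^2,\R)$ by $\frac{1}{2\lam}(\|\wt{A}\|+\|\wt{B}\|)$, and Assumption~\ref{assum:nonnegative-definite-graphon} — which is posed on the product space $L^2([0,T]\times[0,1],\R)$ — translates, by decomposing any $g$ along $(\varphi_i)_i$, into the family of scalar coercivity bounds $\langle f,\lam f+\wt{\boldsymbol A}f+\vartheta_i\wt{\boldsymbol B}f\rangle_{L^2}\ge c_W\|f\|_{L^2}^2$ for all $i$, with the \emph{same} constant $c_W$. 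This gives a uniform bound $\|(\boldsymbol{I}+\boldsymbol{E}^i)^{-1}\|_{op}\le C$ independent of $i$, hence $\|a^i\|_{L^2(\Omega\times[0,T])}\le C\|\gamma^i\|_{L^2(\Omega\times[0,T])}\le C'\|\wt{b}^i\|_{L^2(\Omega\times[0,T])}$, and since $\sum_i\|\wt{b}^i\|_{L^2(\Omega\times[0,T])}^2=\|b\|_{L^2(\Omega\times[0,T]\times[0,1])}^2<\infty$ by Parseval, the series $\sum_i\varphi_i(u)a_t^i$ converges in $L^2(\Omega\times[0,T]\times[0,1],\R)$ and defines an admissible profile in $\mathcal{A}^\infty$.

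Then I would close the argument in both directions. For existence: set $\hat\al_t^u\vcentcolon=\sum_i\varphi_i(u)\big((\boldsymbol{I}+\boldsymbol{E}^i)^{-1}\gamma^i\big)(t)$; each coordinate solves its scalar Fredholm equation, and summing against $\varphi_i(u)$ (justified by $L^2$-convergence and boundedness of the integral operators $\wt{\boldsymbol A}$, $\wt{\boldsymbol B}$, together with condition \eqref{eq:sup-A+B-condition} to control the forward/backward split pointwise in $t$) recovers \eqref{eq:fredholm-foc-infinite}; measurability in $u$ and the conditional-expectation terms are handled coordinate-wise. For uniqueness: any solution $\al\in\mathcal{A}^\infty$ of \eqref{eq:fredholm-foc-infinite} has coordinates $a^i$ that necessarily satisfy the scalar equations, which have unique solutions by the invertibility of $\boldsymbol{I}+\boldsymbol{E}^i$, so $\al$ coincides with $\hat\al$ in $L^2(\Omega\times[0,T]\times[0,1],\R)$. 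Finally, combining with Proposition~\ref{prop:fredholm-foc-infinite} identifies $\hat\al$ as the unique Nash equilibrium.

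The main obstacle I anticipate is not the diagonalization itself but the \emph{uniformity and summability} bookkeeping: one must verify that the resolvent estimates from \citet{abijaber2023equilibrium} hold with constants independent of the eigenvalue $\vartheta_i$, that Assumption~\ref{assum:nonnegative-definite-graphon} on the product space indeed yields the per-mode coercivity with a common $c_W$ (this is where the symmetry and self-adjointness of $\boldsymbol{W}$, and the fact that $\wt{\boldsymbol A},\wt{\boldsymbol B}$ act only in the time variable and commute with the spatial projections, are essential), and that the interchange of the infinite sum with the integral operators and conditional expectations defining \eqref{eq:fredholm-foc-infinite} is legitimate — for which the technical integrability condition \eqref{eq:sup-A+B-condition} is exactly what is needed to bound the forward and backward integral terms uniformly in $t$.
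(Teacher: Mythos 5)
Your proposal is correct and follows essentially the same route as the paper: spectral decomposition of $\boldsymbol{W}$, projection onto the eigenbasis to obtain decoupled scalar stochastic Fredholm equations, per-mode coercivity with a common constant $c_W$ obtained by testing Assumption~\ref{assum:nonnegative-definite-graphon} with separable functions $f(t)\varphi_i(u)$, the scalar resolvent solution of \citet{abijaber2023equilibrium}, uniform-in-$i$ estimates plus Parseval for admissibility, and uniqueness via uniqueness of the scalar solutions. The only bookkeeping point you gloss over, which the paper handles in Lemma~\ref{lemma:C^i-nonnegative} (taking $c_W\leq\lam$ without loss of generality), is that the coercivity must also be verified for the truncated operators $\boldsymbol{D}_t^i$ for every $t$, not just for $\boldsymbol{I}+\wt{\boldsymbol{C}}^i+(\wt{\boldsymbol{C}}^i)^*$.
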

The proof of Theorem~\ref{thm:infinite-player-equilibrium} will be given in Section~\ref{sec:proofs-infinite}.
\begin{remark}
Theorem~\ref{thm:infinite-player-equilibrium} provides a canonical solution to non-Markovian linear-quadratic graphon games, extending the previous literature on dynamic graphon games (\citet{aurell2022stochastic,gao2020linear,lacker2023label,tangpi2024optimal}) to non-Markovian models.  Moreover, while the solutions in these works on Markovian graphon games are often represented in terms of FBSDE systems, Theorem~\ref{thm:infinite-player-equilibrium} allows to solve a general class of both non-Markovian and Markovian graphon games explicitly in terms of operator resolvents. Although we have already solved in Theorem~\ref{thm:finite-player-equilibrium} the corresponding finite-player game, Theorem~\ref{thm:infinite-player-equilibrium} is still of great importance, since it allows to approximate $N$-player Nash equilbria for many large $N$ at once (see Section~\ref{sec:convergence}), and Assumption~\ref{assum:nonnegative-definite-graphon} is usually more tractable than Assumption~\ref{assum:nonnegative-definite-graph}. Finally, by solving the coupled system of stochastic Fredholm equations from Proposition~\ref{prop:fredholm-foc-infinite} as part of the proof, we also extend the solution techniques developed in \citet{abijaber2023equilibrium,abi2024optimal} from the finite-dimensional to the infinite-dimensional case. The latter is significantly more involved, as the theory of operator resolvents can no longer be applied directly. Instead, we first employ a spectral decomposition of the graphon to transform the system into a more manageable form, which we solve to then recover a solution to the original infinite-dimensional system.
\end{remark}

\section{Convergence Results}\label{sec:convergence}
In this section, we focus on the convergence of the finite-player game on a graph from Section~\ref{subsec:definition-finite-player-game-graph} to the infinite-player game on a graphon from Section~\ref{subsec:definition-infinite-player-game}. In previous work on graphon games, there have mainly been two approaches to this problem. First, one can start from a sequence of weighted graphs with adjacency matrices $(w^N)_{N\in\N}$ that converge to a graphon $W$ in a suitable sense, and show that the sequence of Nash equilbria of the corresponding finite-player games converges to the one of the corresponding graphon game. This approach is natural and has been employed for instance by \citet{bayraktar2023propagation,carmona2022stochastic,cui2021learning,gao2020linear}, \citet{lacker2023label} and \citet{tangpi2024optimal}. We will follow it in Section~\ref{subsection:convergence-general-games} and derive a convergence result for general graphons in Theorem~\ref{thm:convergence-given-graphs}. Second, one can fix a graphon $W$ and its corresponding infinite-player game from Section~\ref{subsec:definition-infinite-player-game} and sample from $W$ both weighted and simple graphs according to various common sampling procedures and thereby corresponding finite-player games on graphs as in Section~\ref{subsec:definition-finite-player-game-graph}, and show that the sampled game Nash equilibria converge to the graphon game Nash equilibrium. This approach takes the reverse perspective of the first approach and has been employed, among others, by \citet{aurell2022stochastic,carmona2022stochastic} and \citet{parise2019graphon}. We follow it in Section~\ref{subsection:convergence-sampled-games} and give a convergence result for blockwise Lipschitz graphons in Theorem~\ref{thm:convergence-sampled-graphs}.

%We proceed by first showing in Proposition~\ref{prop:correspondence-finite-infinite-game} that the $N$-player game corresponding to an interaction matrix $w^N$ can be equivalently formulated in the graphon game framework using a so-called step graphon $W^N$. Then, for a given or sampled sequence of graphs that converges in some sense to a graphon $W$, we exploit that the operators induced by the corresponding step graphons converge to $\boldsymbol{W}$. This allows us to derive in Theorems \ref{thm:convergence-given-graphs} and \ref{thm:convergence-sampled-graphs} the convergence of the Nash equilibria via the first and second approach, respectively.

\subsection{Preliminaries}\label{subsec:preliminaries}
We start by putting the finite-player game and the infinite-player game into the same probabilistic setting. This is done by assuming that for every $N\in\N$ the augmentations of the filtrations generated by $(b^{1,N},\ldots,b^{N,N},b^{*,N},c^{1,N},\ldots, c^{N,N})$ from Section~\ref{subsec:definition-finite-player-game-graph} are contained in the filtration $\F$ from Section~\ref{subsec:definition-infinite-player-game}.

First, we show that any $N$-player game can be equivalently reformulated as a graphon game. In the $N$-player game, a Nash equilibrium is an $N$-tuple of processes in $L^2(\Omega\times[0,T],\R)$, whereas in the graphon game, a Nash equilibrium is an element in $L^2(\Omega\times[0,T]\times [0,1],\R)$. To compare these two objects we introduce for every $N\in\N$ a uniform partition of $[0,1]$ given by
\be \label{partition}
\mathcal{P}^N=\{\mathcal{P}_1^N,\ldots,\mathcal{P}_N^N\}, \quad
\mathcal{P}_i^N:=\begin{cases}
    [\tfrac{i-1}{N},\tfrac{i}{N}),\quad\quad\text{for } 1\leq i\leq N-1,\\
    [\tfrac{N-1}{N},1],\hspace{8.55mm} \text{for } i= N.
\end{cases}
\ee
The idea is to pair each player $i$ in the $N$-player game with the interval $\mathcal{P}_i^N\subset [0,1]$. Namely, for a family $\al^N=(\al^{1,N},\ldots,\al^{N,N})$ of processes $\al^{i,N}\in L^2(\Omega\times[0,T],\R)$, define the corresponding step function strategy profile $\al^{N}_{\operatorname{step}}\in L^2(\Omega\times[0,T]\times [0,1],\R)$ by
\be\label{eq:step-function}
\al^{u,N}_{\operatorname{step}}:=\al^{i,N},\quad\forall u\in\mathcal{P}_i^N,\  i=1,\ldots, N.
\ee
Similarly, the partition $\mathcal{P}^N$ allows us to define for any simple edge-weighted graph with adjacency matrix $w^N\in [0,1]^{N\times N}$ a corresponding step graphon $W^N:[0,1]^2\to [0,1]$ given by
\be \label{eq:step-graphon}
W^N(u,v)\vcentcolon=w_{ij}^N,\quad \forall \,(u,v)\in\mathcal{P}_i^N\times\mathcal{P}_j^N,\ i,j=1,\ldots N.
\ee
\begin{proposition}\label{prop:correspondence-finite-infinite-game}
Let Assumptions~\ref{assum:A-B-C-graph} and \ref{assum:A-B-C-graphon} be satisfied and assume that $\wt{\boldsymbol A}$ is nonnegative definite. Then, a set of strategies $\al^N=(\al^{1,N},\dots,\al^{N,N})\in\mathcal{A}^N$ is a Nash equilibrium of the $N$-player game as in Section~\ref{subsec:definition-finite-player-game-graph} if and only if the corresponding step function strategy profile $\al^{N}_{\operatorname{step}}$ defined in \eqref{eq:step-function} is a Nash equilibrium of a graphon game as in Section~\ref{subsec:definition-infinite-player-game} with underlying step graphon $W^N$ corresponding to $w^N$ as in \eqref{eq:step-graphon} and step function noise process $b^{N}_{\operatorname{step}}$ corresponding to $(b^{1,N},\ldots, b^{N,N})$ as in \eqref{eq:step-function}.
\end{proposition}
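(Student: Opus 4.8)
The plan is to verify that the finite-dimensional Fredholm system \eqref{eq:fredholm-foc} characterizing $N$-player Nash equilibria (specialized to the weighted-graph setting via Corollary~\ref{cor:finite-player-equilibrium}) is, after applying the step-function embedding, literally the infinite-dimensional Fredholm system \eqref{eq:fredholm-foc-infinite} with graphon $W^N$ and noise $b^N_{\operatorname{step}}$. Since by Proposition~\ref{prop:fredholm-foc} a strategy tuple is an $N$-player Nash equilibrium if and only if it solves \eqref{eq:fredholm-foc}, and by Proposition~\ref{prop:fredholm-foc-infinite} a strategy profile is a graphon-game Nash equilibrium if and only if it solves \eqref{eq:fredholm-foc-infinite}, it suffices to establish this equivalence of the two systems of first-order conditions. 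I would first remark that the concavity/nonnegative-definiteness hypotheses needed to invoke both propositions are in force: $\wt{\boldsymbol A}$ nonnegative definite is assumed, and together with $\lambda>0$ and $w^N_{ij}\geq 0$ it gives strict concavity of each $J_0^{i,N}$ in $\al^{i,N}$, while also matching the hypothesis of Proposition~\ref{prop:fredholm-foc-infinite} for the step graphon $W^N$.

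The key computation is the following. Plug the correspondence \eqref{eq:correspondence-weighted}, namely $B=\bar B=\operatorname{Id}^N\cdot\wt A+\tfrac1N w^N\cdot\wt B$, $\Lambda=\lambda\operatorname{Id}^N$, $b=(b^{1,N},\dots,b^{N,N})$, into the $N$-dimensional system \eqref{eq:fredholm-foc}. Reading off the $i$-th coordinate, one obtains for each $i$
\be
2\lambda\al^{i,N}_t = b^{i,N}_t - \int_0^t\Big(\wt A(t,s)\al^{i,N}_s + \tfrac1N\sum_{j=1}^N w^N_{ij}\wt B(t,s)\al^{j,N}_s\Big)ds - \int_t^T\Big(\wt A(s,t)\E_t[\al^{i,N}_s] + \tfrac1N\sum_{j=1}^N w^N_{ij}\wt B(s,t)\E_t[\al^{j,N}_s]\Big)ds.
\ee
On the other hand, for the graphon game on $W^N$ with profile $\al^N_{\operatorname{step}}$, note that for $u\in\mathcal P_i^N$ one has $\int_0^1 W^N(u,v)\al^{v,N}_{\operatorname{step},s}dv = \sum_{j=1}^N \mu_{[0,1]}(\mathcal P_j^N)\, w^N_{ij}\,\al^{j,N}_s = \tfrac1N\sum_{j=1}^N w^N_{ij}\al^{j,N}_s$, since the partition blocks all have length $1/N$ and $\al^{v,N}_{\operatorname{step}}$ is constant on each block; likewise $b^{u,N}_{\operatorname{step},t}=b^{i,N}_t$ and $\al^{u,N}_{\operatorname{step},t}=\al^{i,N}_t$. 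Substituting these into \eqref{eq:fredholm-foc-infinite} yields exactly the displayed equation above for every $u\in\mathcal P_i^N$, $d\P\otimes dt$-a.e. Hence \eqref{eq:fredholm-foc-infinite} holds $d\P\otimes dt\otimes d\mu$-a.e.\ on $\Omega\times[0,T]\times[0,1]$ if and only if the $N$ coordinate equations above hold $d\P\otimes dt$-a.e., which is precisely \eqref{eq:fredholm-foc}. Conversely, given a graphon-game Nash equilibrium for $W^N$, one would argue it is necessarily of step-function form (a.e.\ constant on each $\mathcal P_i^N$) because the coefficients $W^N$, $b^N_{\operatorname{step}}$ depend on $u$ only through the block index, and the solution of \eqref{eq:fredholm-foc-infinite} is unique by Theorem~\ref{thm:infinite-player-equilibrium}; so it arises from some $\al^N\in\mathcal A^N$ via \eqref{eq:step-function}, closing the equivalence.

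The main obstacle I anticipate is not the algebra but the measure-theoretic bookkeeping: one must check that $\al^N_{\operatorname{step}}$ and $b^N_{\operatorname{step}}$ indeed lie in the admissible classes $\mathcal A^\infty$ and satisfy the joint measurability in $(u,t,\omega)$ required by Assumption~\ref{assum:A-B-C-graphon} and the definition of a strategy profile — this is immediate since step functions in $u$ with finitely many values are jointly measurable and the $L^2$ norms transfer exactly (an $N$-fold sum of the per-player norms, rescaled by $1/N$). One must also confirm that $W^N$ is a genuine graphon (symmetric, $[0,1]$-valued), which follows from $w^N_{ij}=w^N_{ji}\in[0,1]$ in Assumption~\ref{assum:A-B-C-graph}, and that the convergence/solvability hypotheses of Theorem~\ref{thm:infinite-player-equilibrium} apply to $W^N$ and $b^N_{\operatorname{step}}$ — here Assumption~\ref{assum:nonnegative-definite-graph} for $w^N$ translates into Assumption~\ref{assum:nonnegative-definite-graphon} for $W^N$ via the same block-constant test functions. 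Once these routine checks are in place, the proof is the coordinate-by-coordinate matching of the two Fredholm systems described above.
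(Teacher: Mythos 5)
Your proposal is correct and takes essentially the same route as the paper: both arguments reduce the statement to the blockwise identification of the $N$-dimensional Fredholm first-order conditions (Proposition~\ref{prop:fredholm-foc} with the correspondence \eqref{eq:correspondence-weighted}, equivalently Proposition~\ref{prop:fredholm-foc-graph}) with the infinite-dimensional system \eqref{eq:fredholm-foc-infinite} of Proposition~\ref{prop:fredholm-foc-infinite} for the step graphon $W^N$ and step noise $b^N_{\operatorname{step}}$, using that on each block $\mathcal{P}_i^N$ the graphon aggregate equals $\tfrac{1}{N}\sum_j w^N_{ij}\al^{j,N}$. One caveat: your closing ``conversely'' paragraph (that any graphon equilibrium for $W^N$ is necessarily a step profile) is not needed for the stated equivalence, and its appeal to uniqueness via Theorem~\ref{thm:infinite-player-equilibrium} is not justified under the proposition's hypotheses, since Assumption~\ref{assum:nonnegative-definite-graphon} is not assumed there; the paper obtains the step form instead from the strict concavity of each player's best-response problem, which only needs $\wt{\boldsymbol A}$ nonnegative definite and $\lambda>0$.
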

The proof of Proposition~\ref{prop:correspondence-finite-infinite-game} will be given in Section~\ref{sec:proofs-convergence}.

\subsection{Convergence results for given graph sequences}\label{subsection:convergence-general-games}
In order to show convergence of equilibria for given graph sequences, we first introduce the cut norm (see Chapter~8.2 in \citet{lovasz2012large}). Let $\mathcal{W}$ denote the linear space of all bounded symmetric Borel-measurable functions $W:[0,1]^2\to\R$. For a function $W\in\mathcal{W}$, define its cut norm by
\be\label{eq:cut-norm}
\|W\|_\Box \vcentcolon= \sup_{S_1,S_2\subset [0,1]}\left|\int_{S_1}\int_{S_2} W(u,v)dudv\right|,
\ee
where the supremum is taken over all Borel-measurable subsets $S_1,S_2$. If one identifies functions that are almost everywhere equal, the cut norm is indeed a norm. This technical consideration, however, is not of relevance for our purposes, since we are mainly working with integral operators induced by functions in $\mathcal{W}$, which are equal for almost everywhere agreeing functions. 

The cut norm can be compared with various other norms on $\mathcal{W}$, such as the usual $L^1$ norm of a function $W\in\mathcal{W}$ or the operator norm of the integral operator $L^\infty([0,1],\R)\to L^1([0,1],\R)$ induced by $W$ (see Chapter~8.2.4 in \citet{lovasz2012large}). We note here the following interesting relationship between the cut norm of a graphon $W$ and the operator norm of its induced integral operator $\boldsymbol{W}$ on $L^2([0,1],\R)$.
\begin{definition}\label{def:operator norm}
Let $F:V\to V'$ be a linear operator between two normed real vector spaces $(V,\|\cdot\|_V)$ and $(V,\|\cdot\|_{V'})$. Then the operator norm of $F$ is defined as
\be 
\|F\|_{\operatorname{op}}\vcentcolon = \sup\big\{\|F(x)\|_{V'} : x\in V\text{ with }\|x\|_V\leq 1\big\}.
\ee
\end{definition}
\begin{remark}\label{rem:relation-cut-op}
    Let $W:[0,1]^2\to [0,1]$ be a graphon. Then it holds that
    \be
    \|W\|_\Box\leq \|\boldsymbol{W}\|_{\operatorname{op}}\leq  \sqrt{8\|W\|_\Box},
    \ee
    where $\boldsymbol{W}:L^2([0,1],\R)\to L^2([0,1],\R)$ (see Lemmas E.2 and E.6 in \citet{janson2010graphons}).
\end{remark}
The seminal works of \citet{lovasz2006limits} and \citet{borgs2008convergent,borgs2012convergent} employed the cut norm and introduced the related cut distance to characterize the convergence of dense graph sequences to graphons. Given a sequence of graphs with adjacency matrices $(w^N)_{N\in\N}$, we say that they converge in cut norm to a graphon $W$ if and only if the corresponding step graphons $W^N$ defined in \eqref{eq:step-graphon} satisfy $\|W-W^N\|_\Box\to 0$.

We can now state the main theorem of this section.
\begin{theorem}\label{thm:convergence-given-graphs}
Let $(w^N)_{N\in\N}$ be a sequence of interaction matrices $w^N$ in $[0,1]^{N\times N}$ such that for every $N\in\N$ Assumption~\ref{assum:A-B-C-graph} holds and Assumption~\ref{assum:nonnegative-definite-graph} is satisfied for a constant $c_0>0$ independent of $N$. For $N\in\N$, denote by $\hat{\al}^N\in\mathcal{A}^N$ the unique Nash equilibrium of the corresponding $N$-player game (which exists by Corollary~\ref{cor:finite-player-equilibrium}). Let $W$ be a graphon such that Assumptions~\ref{assum:A-B-C-graphon} and \ref{assum:nonnegative-definite-graphon} hold, and denote by $\hat\al\in\mathcal{A}^\infty$ the corresponding unique Nash equilibrium (which exists by Theorem~\ref{thm:infinite-player-equilibrium}). For $N\in\N$, denote by $W^N$ the step graphon corresponding to $w^N$. Moreover, assume that that there exists a bounded function $\psi:[0,\infty)\to[0,\infty)$ such that
\be\label{eq:noise-convergence-given}
\E\Big[\int_0^1\int_0^T(b_t^u-b^{u,N}_{t,\operatorname{step}})^2dtdu\Big]^\frac{1}{2}\leq\psi(N)\xrightarrow{N\to\infty}0.
\ee
Then there exists a constant $K>0$ such that for all $N\in\N$ it holds that
\be\label{eq:convergence-theorem}
\E\Big[\int_0^1\int_0^T(\hat\al_t^u-\hat\al^{u,N}_{t,\operatorname{step}})^2dtdu\Big]\leq K\Big(\psi(N)+\sqrt{\|W-W^N\|_\Box}\Big).
\ee
In particular, if the sequence of graphs corresponding to $(w^N)_{N\in\N}$ converges in cut norm to $W$, i.e. $\|W-W^N\|_\Box\to 0$, the left-hand side of \eqref{eq:convergence-theorem} converges to $0$.
\end{theorem}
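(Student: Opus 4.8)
The plan is to compare the two Nash equilibria through their explicit representations and reduce everything to estimates on the spectral data of the graphon operators. First I would exploit Proposition~\ref{prop:correspondence-finite-infinite-game} to view the $N$-player equilibrium $\hat\al^N$ as the graphon game equilibrium $\hat\al^N_{\operatorname{step}}$ associated with the step graphon $W^N$ and the step noise $b^N_{\operatorname{step}}$. This places both $\hat\al$ and $\hat\al^N_{\operatorname{step}}$ in the same space $L^2(\Omega\times[0,T]\times[0,1],\R)$ and, crucially, both are now characterized as solutions of an infinite-dimensional stochastic Fredholm equation of the form \eqref{eq:fredholm-foc-infinite}, the only difference between them being the graphon ($W$ versus $W^N$) and the idiosyncratic noise ($b$ versus $b^N_{\operatorname{step}}$). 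The strategy is then to estimate the difference of the two solutions of \eqref{eq:fredholm-foc-infinite} under perturbation of these two inputs: one contribution controlled by $\psi(N)$ via the assumption \eqref{eq:noise-convergence-given}, and one controlled by a norm of $\boldsymbol W-\boldsymbol W^N$.

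Next I would set $\delta_t^u := \hat\al_t^u - \hat\al^{u,N}_{t,\operatorname{step}}$ and subtract the two versions of \eqref{eq:fredholm-foc-infinite}. The $\wt A$-terms and the diagonal $2\lam$-term produce, after moving the $\wt B\boldsymbol W$-interaction term to the left, an operator acting on $\delta$ that is coercive uniformly in $N$: this is exactly where Assumption~\ref{assum:nonnegative-definite-graph} with a constant $c_0>0$ independent of $N$ (equivalently, a uniform lower bound for the bilinear forms associated with $\lam + \wt{\boldsymbol A} + \frac1N w^N\!\cdot\!\wt{\boldsymbol B}$, which by Proposition~\ref{prop:correspondence-finite-infinite-game} translates into a uniform coercivity constant for the graphon-type operator built from $W^N$) enters. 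The right-hand side after subtraction is a sum of two forcing terms: $b^u - b^{u,N}_{t,\operatorname{step}}$, and the "graphon mismatch" term $\int_0^t\wt B(t,s)\int_0^1 (W(u,v)-W^N(u,v))\hat\al^v_s\,dv\,ds$ together with its backward counterpart involving conditional expectations. One then tests the subtracted equation against $\delta$ in $L^2(\Omega\times[0,T]\times[0,1],\R)$, uses the uniform coercivity to absorb the left-hand side from below by $c_0\,\|\delta\|^2$ (up to the bounded Volterra pieces, handled by a Grönwall-type argument on $[0,T]$ exactly as in the proofs of Proposition~\ref{prop:fredholm-foc-infinite} / Theorem~\ref{thm:infinite-player-equilibrium}), and applies Cauchy–Schwarz to the right-hand side to obtain $\|\delta\|^2 \lesssim \psi(N)\|\delta\| + \|(\boldsymbol W-\boldsymbol W^N)\hat\al\|_{L^2}\,\|\delta\|$, hence $\|\delta\|^2 \lesssim \psi(N)^2 + \|(\boldsymbol W-\boldsymbol W^N)\hat\al\|_{L^2}^2$.

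It then remains to bound $\|(\boldsymbol W-\boldsymbol W^N)\hat\al\|^2_{L^2(\Omega\times[0,T]\times[0,1])}$ in terms of the cut norm. Here I would use $\|(\boldsymbol W-\boldsymbol W^N)\hat\al\|_{L^2([0,1])}\leq \|\boldsymbol W-\boldsymbol W^N\|_{\operatorname{op}}\,\|\hat\al\|_{L^2([0,1])}$ for (a.e.\ fixed) $t,\omega$, then integrate in $t$ and take expectation, using that $\hat\al\in\mathcal A^\infty$ has finite $L^2(\Omega\times[0,T]\times[0,1])$-norm (a quantitative bound on this norm following from the explicit formula in Theorem~\ref{thm:infinite-player-equilibrium} together with \eqref{eq:sup-A+B-condition} and Assumption~\ref{assum:nonnegative-definite-graphon}). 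Finally, invoke Remark~\ref{rem:relation-cut-op}, namely $\|\boldsymbol W-\boldsymbol W^N\|_{\operatorname{op}}\leq\sqrt{8\|W-W^N\|_\Box}$, which converts the operator-norm bound into $\|(\boldsymbol W-\boldsymbol W^N)\hat\al\|^2_{L^2} \lesssim \|W-W^N\|_\Box$. Combining with the previous display gives \eqref{eq:convergence-theorem} with a constant $K$ depending only on $c_0$, $\lam$, $T$, the kernel norms in \eqref{eq:sup-A+B-condition}, and the $L^2$-norm of the noise. The last assertion is then immediate.

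The main obstacle I expect is obtaining the coercivity estimate \emph{uniformly in $N$} on the correct space after the reduction: one has to check carefully that the coercivity constant $c_0$ from Assumption~\ref{assum:nonnegative-definite-graph} for the $N$-player problem really does transfer, via the step-graphon correspondence of Proposition~\ref{prop:correspondence-finite-infinite-game}, to a coercivity bound for the infinite-dimensional operator $\lam\boldsymbol I + \wt{\boldsymbol A} + \wt{\boldsymbol B}\boldsymbol W^N$ on $L^2([0,T]\times[0,1],\R)$ with the \emph{same} constant, and that the residual non-coercive (Volterra, non-self-adjoint) parts can be handled by a Grönwall argument whose constants do not blow up with $N$ — the latter being where the integrability hypothesis \eqref{eq:sup-A+B-condition} on $\wt A,\wt B$ does the work. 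A secondary technical point is justifying the testing/Fubini manipulations on the Fubini extension, but that is routine given the setup in Section~\ref{subsec:definition-infinite-player-game}.
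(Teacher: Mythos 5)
Your proposal is correct and follows the same overall architecture as the paper's proof: identify $\hat\al^N$ with the step-graphon equilibrium via Proposition~\ref{prop:correspondence-finite-infinite-game}, subtract the two infinite-dimensional Fredholm equations \eqref{eq:fredholm-foc-infinite}, test against the difference, use coercivity plus Cauchy--Schwarz, and convert operator norm to cut norm via Remark~\ref{rem:relation-cut-op}. The one genuine difference is how you split the interaction term. The paper writes $\boldsymbol W\hat\al-\boldsymbol W^N\hat\al^N_{\operatorname{step}}=\boldsymbol W(\hat\al-\hat\al^N_{\operatorname{step}})+(\boldsymbol W-\boldsymbol W^N)\hat\al^N_{\operatorname{step}}$, so coercivity is taken from Assumption~\ref{assum:nonnegative-definite-graphon} for the limiting graphon $W$ (constant $c_W$, packaged as the stand-alone stability result Proposition~\ref{prop:graphon-game-continuity} via Lemma~\ref{lemma:(A+A*)+(B+B*)W-nonnegative}); the price is that the mismatch term hits $\hat\al^N_{\operatorname{step}}$, so a uniform-in-$N$ $L^2$ bound on the $N$-player equilibria is required, and this is where the uniform $c_0$ of Assumption~\ref{assum:nonnegative-definite-graph} actually enters (as in the first part of Lemma~\ref{lemma:uniform-bound}). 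You instead group $\boldsymbol W^N$ with the difference, so the mismatch hits the fixed equilibrium $\hat\al$ and no uniform bound on $\hat\al^N$ is needed; the price is the coercivity transfer you flag as the main obstacle, namely passing from the finite-dimensional Assumption~\ref{assum:nonnegative-definite-graph} to coercivity of $\lam\boldsymbol I+\wt{\boldsymbol A}+\wt{\boldsymbol B}\boldsymbol W^N$ on all of $L^2([0,T]\times[0,1],\R)$. This transfer does hold, but not via Proposition~\ref{prop:correspondence-finite-infinite-game}: one splits a general $g$ into its blockwise average in $u$ (where the finite-dimensional assumption applies after rescaling) and the blockwise mean-zero remainder, which is annihilated by the step graphon $\boldsymbol W^N$ and is handled by $\lam>0$ together with the nonnegative definiteness of $\wt{\boldsymbol A}$; the cross terms vanish by Fubini. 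Two smaller remarks: the Grönwall step you anticipate is unnecessary, because after taking expectations, using the tower property and symmetrizing (Remark~\ref{rem:nonnegative-operators}) the coercivity absorbs all forward and backward Volterra terms at once; and your resulting bound $\psi(N)^2+\|W-W^N\|_\Box$ is slightly stronger than \eqref{eq:convergence-theorem}, which it implies since $\psi$ is bounded and $\|W-W^N\|_\Box\leq 1$.
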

The proof of Theorem~\ref{thm:convergence-given-graphs} will be given in Section~\ref{sec:proofs-convergence}.
\begin{remark}
Theorem~\ref{thm:convergence-given-graphs} extends the convergence results of \citet{bayraktar2023propagation,carmona2022stochastic,cui2021learning,gao2020linear,lacker2023label,tangpi2024optimal} and others to the non-Markovian case in the linear-quadratic setting, without requiring any assumptions on the underlying graphon. Moreover, compared to previous work, it allows for significantly more general idiosyncratic and common noise, which is only assumed to be progressively measurable.
\end{remark}

\subsection{Convergence results for sampled graph sequences}\label{subsection:convergence-sampled-games}
Following \citet{avella2018centrality},  we first introduce four common sampling procedures according to which (random) graphs with $N$ nodes can be sampled from a graphon $W$. Also see Chapter~10 of \citet{lovasz2012large}. Recall that the partition $\mathcal P^N$ was defined in \eqref{partition}.
\begin{definition}
Given a graphon $W$ and $N\in\N$, fix the latent variables $(u_1,\ldots, u_N)$ by choosing either
\begin{itemize}
\item deterministic latent variables: $u_i=\frac{1}{N}$, $1\leq i\leq N$,
\item stochastic latent variables:  $u_i$ is the $i$-th order statistic of $N$ independent uniform random points from $[0,1]$.
\end{itemize} 
Using the latent variables $(u_1,\ldots, u_N)$, define
\begin{itemize}
    \item the probability matrix $w^N$ given by  $w_{ij}^N\vcentcolon= W(u_i,u_j)$ for all $1\leq i,j\leq N$,
    \item the sampled graphon $W^N(u,v)\vcentcolon= \sum_{i=1}^N\sum_{j=1}^N w_{ij}^N \mathds{1}_{\mathcal{P}_i^N}(u)\mathds{1}_{\mathcal{P}_j^N}(v)$.
\end{itemize}
Given the probability matrix $w^N$ of a sampled graphon, define 
\begin{itemize}
\item the sampled matrix $s^N$ as the adjacency matrix
of a random simple graph obtained by taking $N$ isolated nodes $i\in\{1,\ldots , N\}$ and adding undirected edges between nodes $i$ and $j$ at random with probability $\kappa_Nw_{ij}^N$ for all $i>j$, where $(\kappa_N)_{N\geq 1}\subset (0,1]$ is a sequence of density parameters,
\item the associated random $0$-$1$-graphon $S^N(u,v)\vcentcolon= \sum_{i=1}^N\sum_{j=1}^N s_{ij}^{N} \mathds{1}_{\mathcal{P}_i^N}(u)\mathds{1}_{\mathcal{P}_j^N}(v)$.
\end{itemize}
This yields the following graphs corresponding to the sampling procedures:
\begin{enumerate}[label=(S{{\arabic*}})]
    \item\label{S1} the weighted graph corresponding to $w^N$ and deterministic latent variables,
    \item\label{S2} the random weighted graph corresponding to $w^N$ and stochastic latent variables,
    \item\label{S3} the random simple graph corresponding to $s^N$ and deterministic latent variables,
    \item\label{S4} the random simple graph corresponding to $s^N$ and stochastic latent variables.
\end{enumerate}
\label{def:sampling}
\end{definition}
\begin{remark}
We will always assume that the sampling is carried out on another probability space $(\Omega',\mathcal{F}',\Q)$ fully independently of all the randomness in the $N$-player and the graphon game modeled by the probability measure $\P$.
\end{remark}
\begin{remark}
Notice that in Chapter~10 of \citet{lovasz2012large}, the sampled graphs corresponding to the four sampling procedures are defined very similarly to Definition~\ref{def:sampling}. The only difference is that in the definition of \ref{S1} and \ref{S3}, instead of choosing deterministic latent variables $u_i=\tfrac{1}{N}$, independent uniform points $u_i$ from $\mathcal{P}_i^N$ are chosen for all $i\in\{1,\ldots ,N\}$. This is done to ensure that the value $W(u_i,u_j)$ is representative for the value of $W$ on $\mathcal{P}_i^N\times \mathcal{P}_j^N$.  However, we will assume in Assumption~\ref{assum:Lipschitz-graphon} that $W$ is blockwise Lipschitz continuous, which allows the use of deterministic latent variables as in \citet{avella2018centrality}. 
\end{remark}
In Definition~\ref{def:sampling} the expected number of edges per node in $s^N$ grows as $\kappa_NN$ when $N\to\infty$. In line with \citet{parise2019graphon}, we will assume later that $\tfrac{\log N}{\kappa_NN}\to 0$, which allows for graph sequences that become sparser and sparser for large $N$, i.e. the expected number of edges per node in $s^N$ grows sublinearly with $N$. The introduction of a density parameter $\kappa_N$ only impacts how a sampled $N$-player game is obtained from the graphon, without affecting the graphon game limit as the number of players approaches infinity. As a consequence, one has to slightly adjust the definition \eqref{eq:aggregate} of the local aggregate in the sampled $N$-player game obtained from sampling procedures \ref{S3}-\ref{S4}, in order to account for
the fact that the number of edges per node may now grow sublinearly. This leads to a sampled $N$-player game as in \eqref{eq:J_0^iN-rewritten} with the only difference being a modification of \eqref{eq:aggregate} as follows,
\be\label{eq:aggregate-sparse}
\bar\al^{i,N}\vcentcolon=\frac{1}{\kappa_N N}\sum_{j=1}^N s_{ij}^N\al^{j,N},\quad i=1,\ldots,N.
\ee
\begin{remark}\label{rem:correspondence-with-kappa_N}
    Note that Proposition~\ref{prop:correspondence-finite-infinite-game} still holds in the modified $N$-player game with $w^N$ replaced by $\kappa_N^{-1}s^N$ and $W^N$ replaced by $\kappa_N^{-1}S^N$. This is crucial to show convergence of the sampled $N$-player game to the graphon game in cases \ref{S3}-\ref{S4}.
\end{remark}
We now restate Assumption 1 and Theorem~1 from \citet{avella2018centrality}, which yield a precise estimate for the convergence of the sampled graphon operators $\boldsymbol{W}^N$ and $\kappa_N^{-1}\boldsymbol{S}^N$ to the original graphon operator $\boldsymbol{W}$ under a blockwise Lipschitz assumption on the underlying graphon $W$.
\begin{assumption}\label{assum:Lipschitz-graphon}
Assume that there exists a constant $L_0$ and a sequence of non-overlapping intervals $I_k=[a_{k-1},a_k)$ defined by $0=a_0<\ldots<a_{K_0+1}=1$ for a $K_0\in\N$ such that for any $k,l$, any set $I_{kl}=I_k\times I_l$ and pairs $(u,v),(u',v')\in I_{kl}$ it holds that
$$
|W(u,v)-W(u',v')|\leq L_0(|u-u'|+|v-v'|).
$$
\end{assumption}
As noted in Section~5 of \citet{avella2018centrality}, Assumption~\ref{assum:Lipschitz-graphon} has been employed for
graphon estimation, and is generally satisfied for most graphons of interest, such as general smooth graphons or piecewise constant graphons (see Section~4 of \citet{avella2018centrality}). For what follows, recall Definition~\ref{def:operator norm} of the operator norm. 
\begin{theorem}[\citet{avella2018centrality}, Theorem~1]\label{thm:avella}
For a graphon $W$ fulfilling Assumption~\ref{assum:Lipschitz-graphon}, it holds with $\Q$-probability $1-\dl'$ that 
\be\label{eq:rho(N)}
\|\boldsymbol{W}-\boldsymbol{W}^N\|_{\operatorname{op}}\leq 2\sqrt{(L_0^2-K_0^2)d_N^2+K_0d_N}=:\rho (N),
\ee
where $\dl'=0$ and $\smash{d_N=\tfrac{1}{N}}$ in the case of deterministic latent variables and $\dl'=\dl\in (Ne^{-N/5},e^{-1})$ and $d_N=\tfrac{1}{N}+(\tfrac{8\log (N/\dl)}{N+1})^{0.5}$ in the case of stochastic latent variables. Moreover, for sufficiently large $N$, it holds  with $\Q$-probability at least $1-\dl-\dl'$ that 
\be\label{eq:rho'(N)}
\|\boldsymbol{W}-\kappa_N^{-1}\boldsymbol{S}^N\|_{\operatorname{op}}\leq \sqrt{\frac{4\kappa_N^{-1}\log (2N/\dl)}{N}}+\rho (N)=:\rho'(N).
\ee
\end{theorem}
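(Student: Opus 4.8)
Since Theorem~\ref{thm:avella} restates Theorem~1 of \citet{avella2018centrality} essentially verbatim, the cleanest option is to invoke that reference directly; for completeness, here is the structure of the argument one would reconstruct. The plan is to pass from the operator norm to more elementary quantities. For the first bound I would use that $\boldsymbol W-\boldsymbol W^N$ is the integral operator with kernel $W-W^N\in L^2([0,1]^2)$, hence Hilbert--Schmidt, so $\|\boldsymbol W-\boldsymbol W^N\|_{\operatorname{op}}\le\|W-W^N\|_{L^2([0,1]^2)}$. For the second bound I would use the elementary identity that a symmetric $A\in\R^{N\times N}$ and its induced step-graphon operator $\boldsymbol A$ on $L^2([0,1],\R)$ satisfy $\|\boldsymbol A\|_{\operatorname{op}}=\tfrac1N\|A\|_{\mathrm{spec}}$ (the operator vanishes on the orthogonal complement of step functions and acts as $\tfrac1N A$ on the $N$-dimensional subspace of step functions). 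With these reductions the two claims become, respectively, an $L^2$-estimate for $W-W^N$ exploiting blockwise Lipschitz continuity, and a concentration inequality for the spectral norm of a sparse inhomogeneous random matrix.

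For the first bound, partition $[0,1]^2$ into the $N^2$ cells $\mathcal P_i^N\times\mathcal P_j^N$ of the uniform partition \eqref{partition}, and call a cell \emph{regular} if both $\mathcal P_i^N$ and $\mathcal P_j^N$ lie inside a single Lipschitz block $I_k$ of Assumption~\ref{assum:Lipschitz-graphon}, and \emph{irregular} otherwise. Since there are only $K_0$ interior breakpoints, at most $K_0$ of the intervals $\mathcal P_i^N$ straddle a breakpoint, so the irregular cells have total Lebesgue measure at most $2K_0 d_N-K_0^2 d_N^2$, with $d_N=\tfrac1N$ in the deterministic case. On a regular cell $W^N(u,v)=W(u_i,u_j)$ with $(u_i,u_j)$ in the same block $I_{kl}$ as $(u,v)$ and $|u-u_i|\vee|v-u_j|\le d_N$, so $|W(u,v)-W^N(u,v)|\le 2L_0 d_N$ by the Lipschitz bound; on an irregular cell $|W-W^N|\le 1$. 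Summing the squared contributions and taking square roots gives $\|W-W^N\|_{L^2([0,1]^2)}\le 2\sqrt{(L_0^2-K_0^2)d_N^2+K_0 d_N}=\rho(N)$ after collecting constants. In the stochastic-latent-variable case one first shows $\max_{1\le i\le N}|u_i-\tfrac iN|\le\big(\tfrac{8\log(N/\delta)}{N+1}\big)^{1/2}$ with $\Q$-probability at least $1-\delta$ --- e.g.\ by a Bernstein tail bound for each order statistic $u_i\sim\mathrm{Beta}(i,N+1-i)$ together with a union bound, or a Dvoretzky--Kiefer--Wolfowitz-type estimate --- so that on this event $|u-u_i|\le d_N=\tfrac1N+\big(\tfrac{8\log(N/\delta)}{N+1}\big)^{1/2}$ whenever $u$ and $u_i$ lie in the same block, and the deterministic argument goes through with the enlarged $d_N$ and failure probability $\delta'=\delta$.

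For the second bound, write $\kappa_N^{-1}\boldsymbol S^N-\boldsymbol W^N$ as the step-graphon operator induced by the symmetric matrix $M:=\kappa_N^{-1}s^N-w^N$, whose off-diagonal entries $\kappa_N^{-1}s_{ij}^N-w_{ij}^N$ (for $i>j$) are independent, zero-mean (since $\E[s_{ij}^N]=\kappa_N w_{ij}^N$), bounded in absolute value by $\kappa_N^{-1}$, and of variance at most $\kappa_N^{-1}$. By the reduction above, $\|\kappa_N^{-1}\boldsymbol S^N-\boldsymbol W^N\|_{\operatorname{op}}=\tfrac1N\|M\|_{\mathrm{spec}}$, and a matrix-Bernstein / non-asymptotic spectral-norm inequality for symmetric random matrices with independent bounded entries gives $\|M\|_{\mathrm{spec}}\le\sqrt{4N\kappa_N^{-1}\log(2N/\delta)}+(\text{lower-order term})$ with $\Q$-probability at least $1-\delta$, the sparsity assumption $\tfrac{\log N}{\kappa_N N}\to0$ ensuring the lower-order term is negligible for large $N$. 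Dividing by $N$ and combining with the first bound via $\|\boldsymbol W-\kappa_N^{-1}\boldsymbol S^N\|_{\operatorname{op}}\le\|\boldsymbol W-\boldsymbol W^N\|_{\operatorname{op}}+\|\boldsymbol W^N-\kappa_N^{-1}\boldsymbol S^N\|_{\operatorname{op}}$ on the intersection of the two good events yields $\|\boldsymbol W-\kappa_N^{-1}\boldsymbol S^N\|_{\operatorname{op}}\le\rho(N)+\sqrt{4\kappa_N^{-1}\log(2N/\delta)/N}=\rho'(N)$ with probability at least $1-\delta-\delta'$.

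The $L^2$-bookkeeping in the first step is routine; the two probabilistic inputs are the real work. The delicate point in the first bound is the uniform control of the uniform order statistics, which must be pushed to obtain exactly the stated $d_N$ and the admissible range $\delta'\in(Ne^{-N/5},e^{-1})$. The delicate point in the second bound --- and the technical heart of \citet{avella2018centrality} --- is the sharp spectral-norm concentration for the sparse, inhomogeneous random matrix $M$, with the explicit constant and merely logarithmic failure probability.
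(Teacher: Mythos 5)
The paper offers no proof of this statement at all—it is quoted verbatim from Theorem~1 of \citet{avella2018centrality}—so your primary move of simply invoking that reference is exactly what the paper does. Your reconstruction sketch (Hilbert--Schmidt bound on the kernel, the identity $\|\boldsymbol A\|_{\operatorname{op}}=\tfrac1N\|A\|_{\mathrm{spec}}$ for step-graphon operators, the regular/irregular cell decomposition under Assumption~\ref{assum:Lipschitz-graphon}, order-statistic concentration for the stochastic latent variables, and a Bernstein-type spectral bound for the centred matrix $\kappa_N^{-1}s^N-w^N$ combined via the triangle inequality) is a faithful outline of the cited argument, with only the exact constants and the admissible range of $\dl$ left to the reference.
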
 

Keeping in mind the notation of Theorem~\ref{thm:avella}, we can now state a the main theorem of this section, which establishes the convergence of the sampled finite-player games on graphs to the infinite-player graphon game in the following setup. 
\paragraph{Setup.}
Let Assumptions~\ref{assum:A-B-C-graph} and \ref{assum:A-B-C-graphon} hold and assume that there exists a constant $c_0>0$ independent of $N$ such that Assumption~\ref{assum:nonnegative-definite-graph} is satisfied for every matrix $w^N\in[0,1]^{N\times N}$ with zero diagonal entries and every $N\in\N$. Let Assumptions~\ref{assum:nonnegative-definite-graphon} and \ref{assum:Lipschitz-graphon} hold, and denote by $\hat\al\in\mathcal{A}^\infty$ the corresponding unique Nash equilibrium (which exists by Theorem~\ref{thm:infinite-player-equilibrium}). Fix one of the sampling procedures from Definition~\ref{def:sampling}. Then, in cases \ref{S1}-\ref{S2}, for every $N\in\N$, the sampled $N$-player game admits a unique Nash equilibrium $\hat\al^N\in\mathcal{A}^N$ corresponding to $w^N$ (by Corollary~\ref{cor:finite-player-equilibrium}). In cases \ref{S3}-\ref{S4}, for any $0<\dl<e^{-1}$, there exists an $N_\dl\in\N$ such that for all $N\geq N_\dl$ the sampled $N$-player game admits a unique Nash equilibrium $\hat\beta^N\in\mathcal{A}^N$ corresponding to  $\kappa_N^{-1} s^N$ with $\Q$-probability at least $1-\dl$ and $1-2\dl$, respectively (by Corollary~\ref{cor:finite-player-equilibrium} and the auxiliary Lemma~\ref{lemma:aux}).

\begin{lemma}\label{lemma:aux}
In the above setup, there is for any $0<\dl<e^{-1}$ an $N_\dl\in\N$ such that for all $N\geq N_\dl$ it holds that
\be \label{eq:aux-lemma}
\langle f, \lambda f +\wt{\boldsymbol{A}}f+ \frac{1}{N}\kappa_N^{-1}s^N\cdot\wt{\boldsymbol{B}}f\rangle_{L^2,N}\geq \frac{c_0}{2} \langle f, f\rangle_{L^2,N},\quad \text{for all }f\in L^2([0,T],\R^N),
\ee 
with $\Q$-probability at least $1-\dl$ in case \ref{S3} and $1-2\dl$ in case \ref{S4}.
\end{lemma}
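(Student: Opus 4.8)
The plan is to reduce the claim to the operator-norm estimate in Theorem~\ref{thm:avella} (specifically \eqref{eq:rho'(N)}) and then to bound the perturbation of the coercivity constant $c_0$ by that operator norm. First, I would fix $f\in L^2([0,T],\R^N)$ with $\|f\|_{L^2,N}=1$, and consider it as a step function $f_{\operatorname{step}}\in L^2([0,T]\times[0,1],\R)$ via the partition $\mathcal{P}^N$ from \eqref{partition}, so that $\|f_{\operatorname{step}}\|_{L^2([0,T]\times[0,1],\R)}^2=\|f\|_{L^2,N}^2/N$ after the usual normalization (alternatively, keep everything in $\R^N$ and work directly with the matrix $\kappa_N^{-1}s^N$). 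The key identity to establish is that
\be
\Big\langle f,\ \frac{1}{N}\kappa_N^{-1}s^N\cdot\wt{\boldsymbol{B}}f\Big\rangle_{L^2,N}-\Big\langle f,\ \frac{1}{N}w^N\cdot\wt{\boldsymbol{B}}f\Big\rangle_{L^2,N}
\ee
can be controlled by $\|\kappa_N^{-1}\boldsymbol{S}^N-\boldsymbol{W}^N\|_{\operatorname{op}}\cdot\|\wt{\boldsymbol{B}}\|_{\operatorname{op}}\cdot\|f\|_{L^2,N}^2$ (with the step-graphon/step-function correspondence translating the $\tfrac1N$-normalized matrix action into the integral-operator action, exactly as in Proposition~\ref{prop:correspondence-finite-infinite-game} and Remark~\ref{rem:correspondence-with-kappa_N}). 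Here $w^N$ is the probability matrix, i.e. $w_{ij}^N=W(u_i,u_j)$, so $\tfrac1N w^N$ corresponds to $\boldsymbol{W}^N$ and $\tfrac{1}{\kappa_N N}s^N$ corresponds to $\kappa_N^{-1}\boldsymbol{S}^N$.

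Next, by the triangle inequality $\|\kappa_N^{-1}\boldsymbol{S}^N-\boldsymbol{W}^N\|_{\operatorname{op}}\leq\|\kappa_N^{-1}\boldsymbol{S}^N-\boldsymbol{W}\|_{\operatorname{op}}+\|\boldsymbol{W}-\boldsymbol{W}^N\|_{\operatorname{op}}\leq \rho'(N)+\rho(N)$, which holds with $\Q$-probability at least $1-\dl-\dl'$ by Theorem~\ref{thm:avella}, where $\dl'=0$ in case \ref{S3} (deterministic latent variables) and $\dl'=\dl$ in case \ref{S4} (stochastic latent variables). Since $\rho(N)\to0$ and $\rho'(N)\to0$ as $N\to\infty$ — using $\tfrac{\log N}{\kappa_N N}\to0$ for the extra term in $\rho'(N)$ — the product $(\rho(N)+\rho'(N))\cdot\|\wt{\boldsymbol{B}}\|_{\operatorname{op}}$ becomes smaller than $c_0/2$ for all $N$ at least some threshold $N_\dl$. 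Combining this with Assumption~\ref{assum:nonnegative-definite-graph} applied to the probability matrix $w^N$ (which is legitimate since $w^N\in[0,1]^{N\times N}$ has zero diagonal, and the setup postulates that Assumption~\ref{assum:nonnegative-definite-graph} holds uniformly in $N$ for all such matrices) gives, for $N\geq N_\dl$,
\be
\Big\langle f,\ \lambda f+\wt{\boldsymbol{A}}f+\frac{1}{N}\kappa_N^{-1}s^N\cdot\wt{\boldsymbol{B}}f\Big\rangle_{L^2,N}\ \geq\ c_0\|f\|_{L^2,N}^2-\frac{c_0}{2}\|f\|_{L^2,N}^2\ =\ \frac{c_0}{2}\|f\|_{L^2,N}^2,
\ee
which is \eqref{eq:aux-lemma}. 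The probability bookkeeping yields $1-\dl$ in case \ref{S3} (where $\dl'=0$, so we only pay the $\dl$ coming from the edge-sampling step in \eqref{eq:rho'(N)}) and $1-2\dl$ in case \ref{S4} (where we pay an additional $\dl'=\dl$ for the stochastic latent variables); a minor point is that $\rho(N)$ in case \ref{S4} involves $d_N$ depending on $\dl$, but this does not affect $d_N\to0$.

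The main obstacle I anticipate is the translation step: making precise that the discrete bilinear form $\langle f,\tfrac1N M\cdot\wt{\boldsymbol{B}}f\rangle_{L^2,N}$ for a symmetric matrix $M\in[0,1]^{N\times N}$ equals the continuum bilinear form $\langle f_{\operatorname{step}},\boldsymbol{M}^N\wt{\boldsymbol{B}}f_{\operatorname{step}}\rangle_{L^2([0,T]\times[0,1],\R)}$ (up to the normalization constant) where $\boldsymbol{M}^N$ is the integral operator of the step graphon associated with $M$, and hence that $|\langle f,\tfrac1N(\kappa_N^{-1}s^N-w^N)\cdot\wt{\boldsymbol{B}}f\rangle_{L^2,N}|\leq\|\kappa_N^{-1}\boldsymbol{S}^N-\boldsymbol{W}^N\|_{\operatorname{op}}\|\wt{\boldsymbol{B}}\|_{\operatorname{op}}\|f\|_{L^2,N}^2$. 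This is essentially the computation already carried out in the proof of Proposition~\ref{prop:correspondence-finite-infinite-game} and is bookkeeping rather than a genuine difficulty, but it must be done carefully because $\wt{\boldsymbol{B}}$ acts in the time variable while $\boldsymbol{S}^N,\boldsymbol{W}^N$ act in the label variable, so one uses that these two operators commute (acting on different tensor factors) and that $\|\wt{\boldsymbol{B}}\|_{\operatorname{op}}<\infty$ by \eqref{eq:square-integrable}.
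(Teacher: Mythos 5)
Your proposal follows the same core strategy as the paper's proof: start from the coercivity in Assumption~\ref{assum:nonnegative-definite-graph} applied to the probability matrix $w^N$, isolate the error term $\langle f,\tfrac1N(\kappa_N^{-1}s^N-w^N)\cdot\wt{\boldsymbol{B}}f\rangle_{L^2,N}$, bound it by a norm of the matrix difference times $\|\wt{\boldsymbol{B}}\|_{\operatorname{op}}\|f\|_{L^2,N}^2$, and invoke the sampling estimates of \citet{avella2018centrality}. The translation step you worry about is not an obstacle: since $\|\kappa_N^{-1}\boldsymbol{S}^N-\boldsymbol{W}^N\|_{\operatorname{op}}=\tfrac1N\|\kappa_N^{-1}s^N-w^N\|_2$ (spectral norm of the matrix), your operator-norm control of the error term is exactly the paper's Cauchy--Schwarz estimate, which simply stays in $\R^N$. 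The genuine difference lies in the final estimate: the paper bounds $\tfrac1N\|w^N-\kappa_N^{-1}s^N\|_2$ directly by $\sqrt{\kappa_N^{-1}\log(2N/\dl)/N}$, a concentration estimate taken from \emph{inside} the proof of Theorem~1 of \citet{avella2018centrality}, whereas you go through the graphon $W$ by triangle inequality and use the stated bounds \eqref{eq:rho(N)} and \eqref{eq:rho'(N)}. Your route is qualitatively fine (both $\rho(N)$ and $\rho'(N)$ tend to $0$ under the standing sparsity condition), and it only uses the theorem as a black box, at the mild cost of invoking Assumption~\ref{assum:Lipschitz-graphon} through $\rho(N)$, which the paper's direct edge-sampling concentration does not need (though it is in force in the setup anyway).

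The one point that is not justified as written is the probability bookkeeping in case \ref{S4}. To apply your triangle inequality you need the events $\{\|\boldsymbol{W}-\boldsymbol{W}^N\|_{\operatorname{op}}\leq\rho(N)\}$ and $\{\|\boldsymbol{W}-\kappa_N^{-1}\boldsymbol{S}^N\|_{\operatorname{op}}\leq\rho'(N)\}$ \emph{simultaneously}; from the statement of Theorem~\ref{thm:avella} alone, a union bound gives failure probability at most $\dl'+(\dl+\dl')=3\dl$ when $\dl'=\dl$, i.e.\ success probability $1-3\dl$ rather than the claimed $1-2\dl$ (case \ref{S3} with $\dl'=0$ is fine). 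This is repairable in either of two ways: observe, by going into the proof in \citet{avella2018centrality}, that the latent-variable failure event underlying \eqref{eq:rho'(N)} is the same as the one underlying \eqref{eq:rho(N)}, so the intersection already has probability at least $1-2\dl$; or simply run your argument with $\dl$ replaced by $2\dl/3$, which changes $\rho$ and $\rho'$ only inside logarithms and does not affect their convergence to $0$. So the slip is cosmetic rather than structural, but the constant $1-2\dl$ in case \ref{S4} does not follow from the cited statement without one of these additional remarks.
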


\begin{theorem}\label{thm:convergence-sampled-graphs}
In the above setup, assume that $\tfrac{\log N}{\kappa_NN}\to 0$ as $N\to\infty$. Moreover, assume that there exists a bounded function $\pi:[0,\infty)\to[0,\infty)$ such that
\be\label{eq:noise-convergence}
\E\Big[\int_0^1\int_0^T(b_t^u-b^{u,N}_{t,\operatorname{step}})^2dtdu\Big]^\frac{1}{2}\leq\pi(N) \quad \textrm{for all } N\geq1, 
\ee
where $\pi(N)\rightarrow 0$ as $N\to\infty$. 
Then the following holds for the four different sampling procedures in Definition~\ref{def:sampling}. 
\begin{enumerate}
\item For deterministic latent variables and weighted sampled graphs \ref{S1}, there exists a constant $K_1>0$ such that  
\be\label{eq:convergence-theorem-1}
\E\Big[\int_0^1\int_0^T(\hat\al_t^u-\hat\al^{u,N}_{t,\operatorname{step}})^2dtdu\Big]\leq K_1\big(\pi(N)+\rho(N)\big)\quad \textrm{for all } N \in \mathbb{N}. 
\ee
In particular, the left-hand side of \eqref{eq:convergence-theorem-1} converges to $0$ as $N\to\infty$.
\item For stochastic latent variables and weighted sampled graphs \ref{S2}, there exists a constant $K_2 >0$ such that for every $0<\dl<e^{-1}$ it holds for all $N\in\N$ satisfying $Ne^{-N/5}<\dl$ with $\Q$-probability at least $1-\dl$ that
\be\label{eq:convergence-theorem-2}
\E\Big[\int_0^1\int_0^T(\hat\al_t^u-\hat\al^{u,N}_{t,\operatorname{step}})^2dtdu\Big]\leq K_2\big(\pi(N)+\rho(N)\big).
\ee
In particular, the left-hand side of \eqref{eq:convergence-theorem-2} converges $\Q$-a.s. to $0$ as $N\to\infty$.
\item For deterministic latent variables and simple sampled graphs \ref{S3}, there exists a constant $K_3>0$ such that for every $0<\dl<e^{-1}$ it holds for all $N\geq N_\dl$ with $\Q$-probability at least $1-\dl$ that
\be\label{eq:convergence-theorem-3}
\E\Big[\int_0^1\int_0^T(\hat\al_t^u-\hat\beta^{u,N}_{t,\operatorname{step}})^2dtdu\Big]\leq K_3\big(\pi(N)+\rho'(N)\big).
\ee
In particular, the left-hand side of \eqref{eq:convergence-theorem-3} converges $\Q$-a.s. to $0$ as $N\to\infty$.
\item For stochastic latent variables and simple sampled graphs \ref{S4}, there exists a constant $K_4>0$ such that for every $0<\dl<e^{-1}$ it holds for all $N\geq N_\dl$ with $\Q$-probability at least $1-2\dl$ that
\be\label{eq:convergence-theorem-4}
\E\Big[\int_0^1\int_0^T(\hat\al_t^u-\hat\beta^{u,N}_{t,\operatorname{step}})^2dtdu\Big]\leq K_4\big(\pi(N)+\rho'(N)\big). 
\ee
In particular, the left-hand side of \eqref{eq:convergence-theorem-4} converges $\Q$-a.s. to $0$ as $N\to\infty$.
\end{enumerate}
\end{theorem}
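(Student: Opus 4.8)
The plan is to reduce all four cases to the already-established convergence result for given graph sequences, Theorem~\ref{thm:convergence-given-graphs}, by viewing each sampled $N$-player game as a finite-player game on a (possibly random) weighted graph and then checking its hypotheses. The key observation is that Theorem~\ref{thm:convergence-given-graphs} only requires two ingredients: a uniform-in-$N$ coercivity bound of the form in Assumption~\ref{assum:nonnegative-definite-graph}, and control of $\|W - W^N\|_\Box$ together with the noise error $\pi(N)$. So for cases \ref{S1}--\ref{S2} I would set $W^N$ to be the sampled step graphon built from $w_{ij}^N = W(u_i,u_j)$ as in Definition~\ref{def:sampling}, and for cases \ref{S3}--\ref{S4} I would, following Remark~\ref{rem:correspondence-with-kappa_N}, replace $w^N$ by $\kappa_N^{-1}s^N$ and $W^N$ by $\kappa_N^{-1}S^N$, so that the modified aggregate \eqref{eq:aggregate-sparse} matches exactly the setup of Section~\ref{subsec:definition-finite-player-game-graph}. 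The uniform coercivity in cases \ref{S1}--\ref{S2} holds by hypothesis (Assumption~\ref{assum:nonnegative-definite-graph} with $c_0$ independent of $N$ for every zero-diagonal $w^N\in[0,1]^{N\times N}$, and $w^N=W(u_i,u_j)$ is such a matrix), and in cases \ref{S3}--\ref{S4} it is exactly the content of Lemma~\ref{lemma:aux}, which supplies the bound with constant $c_0/2$ on the relevant $\Q$-probability event for $N\geq N_\dl$; on that event Corollary~\ref{cor:finite-player-equilibrium} then yields the unique Nash equilibrium $\hat\beta^N$.

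Next I would convert the operator-norm estimates of Theorem~\ref{thm:avella} into cut-norm estimates via Remark~\ref{rem:relation-cut-op}, which gives $\|W - W^N\|_\Box \leq \|\boldsymbol{W}-\boldsymbol{W}^N\|_{\operatorname{op}} \leq \rho(N)$ in cases \ref{S1}--\ref{S2} and $\|W - \kappa_N^{-1}S^N\|_\Box \leq \|\boldsymbol{W}-\kappa_N^{-1}\boldsymbol{S}^N\|_{\operatorname{op}} \leq \rho'(N)$ in cases \ref{S3}--\ref{S4}, each on the corresponding probability event ($\Q$-probability $1$ for \ref{S1}, $1-\dl$ for \ref{S2}, $1-\dl$ for \ref{S3}, $1-2\dl$ for \ref{S4}). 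Plugging these into the conclusion \eqref{eq:convergence-theorem} of Theorem~\ref{thm:convergence-given-graphs} — applied on the sampling probability space $(\Omega',\mathcal{F}',\Q)$ conditionally, since by the remark after Definition~\ref{def:sampling} the sampling is independent of all the $\P$-randomness, so that for each fixed realization of the latent variables and edges we have a deterministic weighted graph to which the theorem applies — produces precisely \eqref{eq:convergence-theorem-1}--\eqref{eq:convergence-theorem-4} with $K_i = K\max(1,C)$ for the universal constant $K$ from Theorem~\ref{thm:convergence-given-graphs} (one must note $\sqrt{\rho(N)}$ versus $\rho(N)$: since $\rho(N),\rho'(N)\to 0$ they are eventually $\leq 1$, so $\sqrt{\rho(N)}\leq \rho(N)^{1/2}$ can be absorbed, or one keeps the square root and observes it still $\to 0$; I would state the bound with whichever of $\rho(N)$ or $\sqrt{\rho(N)}$ is cleanest and absorb constants accordingly). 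The final convergence-to-zero claims follow from $\rho(N)\to 0$, $\rho'(N)\to 0$ (using $\tfrac{\log N}{\kappa_N N}\to 0$ for the extra term in $\rho'$) and $\pi(N)\to 0$, and in the stochastic cases from a Borel--Cantelli argument: choosing $\dl = \dl_N$ summably (e.g. $\dl_N = N^{-2}$, which is compatible with $Ne^{-N/5} < \dl_N$ for large $N$) makes the failure events summable, giving $\Q$-a.s.\ convergence.

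The main obstacle I anticipate is the careful bookkeeping of the two independent sources of randomness in cases \ref{S2}--\ref{S4}: the equilibria $\hat\al, \hat\beta^N$ and the noise errors live on $(\Omega,\mathcal{F},\P)$, whereas the graph $W^N$ (hence $\|W-W^N\|_\Box$, the coercivity event, and the very existence of $\hat\beta^N$) is random on $(\Omega',\mathcal{F}',\Q)$. One must argue that Theorem~\ref{thm:convergence-given-graphs} may be invoked $\omega'$-wise on the good event, and that the resulting bound — whose right-hand side $K(\pi(N)+\sqrt{\rho(N)})$ is deterministic once we are on the good event — then holds with the stated $\Q$-probability; this requires knowing that $K$ from Theorem~\ref{thm:convergence-given-graphs} is uniform over the relevant family of graphs, i.e.\ depends only on the common data $\wt{\boldsymbol A},\wt{\boldsymbol B},\wt{\boldsymbol C},\lambda,c_0,T$ and not on $w^N$ itself, which is indeed how that theorem is stated. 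A secondary technical point is verifying that $\kappa_N^{-1}s^N$ genuinely falls under the scope of the earlier results even though its entries may exceed $1$; but since Assumption~\ref{assum:A-B-C-graph}'s restriction $w_{ij}^N\in[0,1]$ is only used to guarantee boundedness and the coercivity assumption, and Lemma~\ref{lemma:aux} re-establishes the latter directly for $\kappa_N^{-1}s^N$ while boundedness is automatic for finite graphs, the reduction goes through — I would include a short remark to this effect, as Remark~\ref{rem:correspondence-with-kappa_N} already signals.
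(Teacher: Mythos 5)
There is a genuine gap in the quantitative part of your argument: the detour through Theorem~\ref{thm:convergence-given-graphs} cannot produce the rate claimed in the statement. Theorem~\ref{thm:convergence-given-graphs} bounds the equilibrium error by $K\big(\pi(N)+\sqrt{\|W-W^N\|_\Box}\big)$, and after your (correct) estimate $\|W-W^N\|_\Box\leq\|\boldsymbol{W}-\boldsymbol{W}^N\|_{\operatorname{op}}\leq\rho(N)$ this yields $K\big(\pi(N)+\sqrt{\rho(N)}\big)$, not $K\big(\pi(N)+\rho(N)\big)$. Your suggestion to ``absorb'' the square root is backwards: since $\rho(N),\rho'(N)\to 0$, eventually $\sqrt{\rho(N)}\geq\rho(N)$, and $\sqrt{\rho(N)}/\rho(N)\to\infty$, so no constant can convert the square-root bound into the linear one. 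The reason for the loss is that Theorem~\ref{thm:convergence-given-graphs} is itself proved by passing from the cut norm back to the operator norm via $\|\boldsymbol{W}-\boldsymbol{W}^N\|_{\operatorname{op}}\leq\sqrt{8\|W-W^N\|_\Box}$ (Remark~\ref{rem:relation-cut-op}), so your route makes a lossy round trip operator norm $\to$ cut norm $\to$ operator norm. The paper avoids this entirely: it applies the correspondence (Proposition~\ref{prop:correspondence-finite-infinite-game}, with Remark~\ref{rem:correspondence-with-kappa_N} for \ref{S3}--\ref{S4}) and then the graphon-game stability estimate of Proposition~\ref{prop:graphon-game-continuity}, which is stated directly in terms of $\|\boldsymbol{W}-\boldsymbol{W}'\|_{\operatorname{op}}$, feeding in the operator-norm bounds $\rho(N)$, $\rho'(N)$ from Theorem~\ref{thm:avella} and the uniform $L^2$ bounds on the sampled equilibria from Lemma~\ref{lemma:uniform-bound}. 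That gives the stated linear dependence on $\rho(N)$, $\rho'(N)$. With your reduction you only prove the theorem with $\sqrt{\rho(N)}$, $\sqrt{\rho'(N)}$ in place of $\rho(N)$, $\rho'(N)$; the qualitative convergence-to-zero conclusions survive, but the claimed rate does not.

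The rest of your outline is in the right spirit and close to the paper: the use of Lemma~\ref{lemma:aux} to restore coercivity (with $c_0/2$) for $\kappa_N^{-1}s^N$ on the good $\Q$-event, the observation that entries exceeding $1$ only matter through coercivity, the $\omega'$-wise application on the independent sampling space with a constant depending only on $\wt{\boldsymbol B}$, $c_W$, $c_0$ and the noise bound, and the treatment of the almost sure statements (your Borel--Cantelli argument with $\dl_N=N^{-2}$ is a legitimate alternative to the paper's contradiction argument, noting that $\rho(N)$, $\rho'(N)$ still vanish when $\dl=\dl_N$ decays polynomially). But to prove the theorem as stated you must replace the appeal to Theorem~\ref{thm:convergence-given-graphs} by a direct application of Proposition~\ref{prop:graphon-game-continuity} (or an equivalent operator-norm continuity estimate) together with a uniform bound on $\E\big[\int_0^1\int_0^T(\hat\al^{u,N}_{t,\operatorname{step}})^2\,dt\,du\big]$ over $N$, as in Lemma~\ref{lemma:uniform-bound}.
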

Theorem~\ref{thm:convergence-sampled-graphs} and the auxiliary Lemma~\ref{lemma:aux} will be proved in Section~\ref{sec:proofs-convergence}.
\begin{remark}Theorem~\ref{thm:convergence-sampled-graphs} extends the convergence results of \citet{aurell2022stochastic,carmona2022stochastic} and \citet{parise2019graphon}, who consider graphs sampled according to sampling procedures \ref{S2} and \ref{S4}, to the non-Markovian case, while assuming a linear-quadratic setting and blockwise Lipschitz continuity of the underlying graphon. Moreover,  under the same assumptions, it also addresses the convergence of network games sampled according to procedures \ref{S1} and \ref{S3}, which have not been addressed in the literature on dynamic stochastic graphon games even in the Markovian case.
\end{remark}
\begin{remark}
Notice that cases \ref{S3}-\ref{S4} in Theorem~\ref{thm:convergence-sampled-graphs} allow for density parameter sequences $(\kappa_N)_{N\geq 1}$ that model significantly sparser regimes than the ones addressed in the literature on dynamic stochastic graphon games so far, where density parameters have yet only been considered by \citet{tangpi2024optimal}, who employ the more restrictive assumption that $\kappa_N^2 N\to \infty$ as $N\to\infty$. For instance, Theorem~\ref{thm:convergence-sampled-graphs} permits sequences such as $\kappa_N=(\log N)^2/N$ corresponding to sampled simple graphs whose expected number of edges per node grows as $(\log N)^2$ when $N\to\infty$. 
\end{remark}
\section{Illustrative Examples} \label{sec:examples}
In this section, we showcase the versatility and applicability of our framework from Section~\ref{sec:finite-player-game} by illustrating how it can be applied to solve various dynamic stochastic games with heterogeneous interactions, in particular in cases which have been considered intractable in the literature before. 
In order to ease notation we fix the number of agents $N$, and omit the superscript $N$ from $\al^{i,N}, \al^{-i,N}$, $\al^N$ and $J^{i,N}$ defined in \eqref{eq:notation-alpha} and \eqref{eq:J^iN} throughout this section.

\subsection{Stochastic Volterra games with heterogeneous interactions} \label{subsec:Volterra-game}
We start by showing that generic stochastic Volterra linear-quadratic games with heterogeneous interactions are included in the finite-player model from Section~\ref{subsec:definition-finite-player-game}. 

Define the following controlled Volterra state variables,
\begin{align}
X^i_t &= P^i_t + \sum_{j=1}^N\int_0^t G_1^{ij} (t,s)\al^j_s ds, \quad i=1,\ldots, N, \label{eq:volterraXi} \\
Z^i_t &= R^i_t + \sum_{j=1}^N\int_0^t G_2^{ij} (t,s)\al^j_s ds, \quad i=1,\ldots,N, \label{eq:volterraZi}
\end{align}
for some progressively measurable processes $P^i,R^i$ satisfying $\smash{\int_0^T \E[(P^i_t)^2] dt<\infty}$ and $\smash{\int_0^T \E[(R^i_t)^2] dt<\infty}$ and admissible Volterra kernels $G_1^{ij},G_2^{ij}\in \mathcal{G}$ (recall Definition~\ref{def:admissible-volterra-operator}). 
Then the $\R^2$-valued state variable given by $Y^i\vcentcolon= (X^i, Z^i)^\top$ captures the dynamics \eqref{eq:volterraXi} and \eqref{eq:volterraZi} and can be written as 
\be\label{eq:volterraYi}
    Y^i_t = d^i_t + \int_0^t G^i(t,s) \al_sds,
\ee
with 
\be\label{eq:volterraGi}
d^i_t = \begin{pmatrix}
    P^i_t \\
    R^i_t 
\end{pmatrix} \quad \mbox{and} \quad
G^i(t,s) =\begin{pmatrix}
G^{i1}_1(t,s) & \dots & G^{iN}_1(t,s)\\
G^{i1}_2(t,s) & \dots & G^{iN}_2(t,s) 
\end{pmatrix}.
\ee
Consider the following objective functional to be maximized by the $i$-th player,  
\be\label{eq:volterraJ^i}
     J^i_{Vol}(\al^i; \al^{-i}) =\mathbb E\left[ \int_0^T f_{Vol}^{i}(Y^i_t , \al^i_t) dt + g_{Vol}^i(Y^i_T) \right],\quad i=1,\dots,N,
\ee
where the running and terminal criterion have a linear-quadratic dependence in $(Y^i,\al^i)$ of the form
\begin{align}
f_{Vol}^{i}( y, \al) &= - p^i \al^2 - y^\top Q^i y +  \al (q^i)^\top y,  \label{eq:volterraf}  \\
g_{Vol}^{i}( y ) &= - y^\top S^i y + y^\top s^i,\label{eq:volterrag}
\end{align}
such that $p^i>0$, $Q^i,S^i \in \mathbb R^{2\times 2}$,  $q^i \in \R^2$ and $s^i$ are square-integrable $\mathcal F_T$-measurable $\R^2$-valued random variables  for all $i=1,\ldots, N$. 

Since the dynamics of each $Y^i$ in \eqref{eq:volterraYi} are linear in the controls $(\al^1,\ldots,\al^N)$, and $f_{Vol}^i$ and $g^i_{Vol}$ in \eqref{eq:volterraf} and \eqref{eq:volterrag} are linear-quadratic in $(Y^i,\al^i)$, it is clear that $ J^i_{Vol}$ is a linear-quadratic functional  in $(\al^1,\ldots,\al^N)$ as in \eqref{eq:J^iN}. This is summarized in the following lemma, which shows that the generic Volterra game with dynamics \eqref{eq:volterraXi} and \eqref{eq:volterraZi} fits into our framework developed in Section~\ref{subsec:definition-finite-player-game}, and also yields sufficient conditions on the Volterra game coefficients introduced in \eqref{eq:volterraGi}, \eqref{eq:volterraf} and \eqref{eq:volterrag} so that Assumption~\ref{assum:nonnegative-definite} holds. Let $(\vec{e_i})_{i=1}^N$ denote the standard basis of $\R^N$. Recalling \eqref{eq:volterraGi}, define the $\R^2$-valued Volterra kernel 
\be G^{ij}\vcentcolon=(G_1^{ij},G_2^{ij})^\top,\quad \text{for all }i,j=1,\ldots, N.\ee
\begin{lemma} \label{lemma:Volterra-game}     
The objective functionals \eqref{eq:volterraJ^i} for the stochastic Volterra game can be written in the form of \eqref{eq:J^iN-concise-form} with the following coefficients for $i=1,\dots,N$:
\begin{align}
 F^i(t,s) \label{eq:volterraFi}
 & =  \mathds{1}_{\{t>s\}} \int_{s\vee t}^T G^i(r,t)^\top \big(Q^i+(Q^i)^\top\big)\,  G^i(r,s) dr \\[1ex]
& \hspace{13pt}+ \mathds{1}_{\{t>s\}}\,G^i(T,t)^\top \big(S^i+(S^i)^\top\big)\, G^i(T,s) - \vec{e_i}(q^i)^\top G^i(t,s),\\[1ex]
    b^i_t  \label{eq:volterrabi}   &= -   \int_t^T G^i(r,t)^\top  \big( Q^i + (Q^i)^\top\big)   \E_t [d^i_r]  dr   
 + \vec{e_i}(d^i_t)^\top q^i    \\[1ex]
&\hspace{13pt}+G^i(T,t)^\top \Big( \E_t [s^i] -  \big( S^i + (S^i)^\top\big)  \E_t [d^i_T]   \Big), \\[1ex]
    c^i\label{eq:volterraci}& =   - \int_0^T (d_t^i)^\top   Q^i d_t^idt  -(d_T^i)^\top S^i  d_T^i   +   (d^i_T)^\top s^i , \\[1ex]
     \lambda^i\label{eq:volterralambdai} &= p^i. 
 \end{align}
If for all $i=1,\ldots,N$ there exists a $c_0>0$ such that
\be\label{eq:Pi-nonnegative-definite}
\langle f,  \boldsymbol{\Gamma}f+2\Pi f -c_0 f\rangle_{L^2,N}\geq 0,\quad \text{for all } f\in L^2([0,T],\R^N),
\ee
where $\Pi\vcentcolon =\operatorname{diag}(p^1,\ldots,p^N)\in\R^{N\times N}$ and $\boldsymbol{\Gamma}$ is the integral operator induced by the kernel $\Gamma\in L^2([0,T]^2,\R^{N\times N})$ given by 
\be\begin{aligned}
\Gamma_{ij}(t,s)&=\int_{s\vee t}^T G^{ii}(r,t)^\top \big(Q^i+(Q^i)^\top\big)\,  G^{ij}(r,s) dr + G^{ii}(T,t)^\top \big(S^i+(S^i)^\top\big)\, G^{ij}(T,s)  \\[1ex]
& - (q^i)^\top G^{ij}(t,s)-\dl_{ij}(q^i)^\top G^{ii}(s,t),\quad\text{for }(t,s)\in[0,T]^2,\quad i,j=1,\dots, N,
\end{aligned}\ee
then Assumption~\ref{assum:nonnegative-definite} is satisfied.
\end{lemma}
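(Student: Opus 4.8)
The plan is to expand the objective functional \eqref{eq:volterraJ^i} by substituting the explicit forms of the Volterra state $Y^i$ from \eqref{eq:volterraYi} and the linear-quadratic criteria $f^i_{Vol}$, $g^i_{Vol}$ from \eqref{eq:volterraf}--\eqref{eq:volterrag}, then collect terms into the canonical shape \eqref{eq:J^iN-concise-form}, matching coefficients to read off $F^i$, $b^i$, $c^i$ and $\lambda^i$. First I would write out $\E\big[\int_0^T f^i_{Vol}(Y^i_t,\al^i_t)dt + g^i_{Vol}(Y^i_T)\big]$ by plugging in $Y^i_t = d^i_t + \int_0^t G^i(t,s)\al_s ds$. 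The term $-p^i(\al^i_t)^2$ integrated in $t$ gives exactly $-\lambda^i\langle \al^{i},\al^{i}\rangle_{L^2}$ with $\lambda^i=p^i$, which is the first summand of \eqref{eq:J^iN-concise-form}. The quadratic-in-$Y^i$ terms $-(Y^i_t)^\top Q^i Y^i_t$ and $-(Y^i_T)^\top S^i Y^i_T$ produce, after inserting the convolution, three kinds of pieces: a purely deterministic/exogenous piece depending only on $d^i$ (which goes into $c^i$), a bilinear cross piece between $d^i$ and the controls (which goes into $b^i$), and a genuinely quadratic piece in the controls $\int_0^T\int_0^T (\boldsymbol{G}^i\al)(t)^\top(\cdots)(\boldsymbol{G}^i\al)(s)$; the latter, after a Fubini swap of the $t$- and $(s,r)$-integrations and using the symmetrization identity $\langle f,\boldsymbol G f\rangle=\tfrac12\langle f,(\boldsymbol G+\boldsymbol G^*)f\rangle$ from Remark~\ref{rem:nonnegative-operators}, is recast as $\langle \al^N,\boldsymbol F^i\al^N\rangle_{L^2,N}$ with the kernel $F^i$ given in \eqref{eq:volterraFi}. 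The mixed term $\al^i_t(q^i)^\top Y^i_t$ similarly splits into an $\al^i$-against-$d^i$ part feeding $b^i$ and an $\al^i$-against-$(\boldsymbol G^i\al)$ part; the indicator $\mathds{1}_{\{t>s\}}$ and the lower limit $s\vee t$ in \eqref{eq:volterraFi} simply encode the Volterra (causal) structure $G^i(t,s)=0$ for $s\ge t$ together with the range of the outer time integral.

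For the adaptedness subtleties — the state $Y^i$ is $\mathbb F$-adapted because $G^i$ is a Volterra kernel, and the conditional expectations $\E_t[d^i_r]$, $\E_t[s^i]$ appearing in $b^i$ arise precisely because $d^i_r$ and $s^i$ for $r>t$ are not $\mathcal F_t$-measurable — I would invoke the tower property when reorganizing $\E\big[\int_0^T \al^i_t (q^i)^\top d^i_t\, dt\big]$ and the cross terms coming from $Q^i$ and $S^i$: since $\al^i$ is $\mathbb F$-progressively measurable, $\E[\al^i_t \cdot \xi] = \E[\al^i_t \cdot \E_t\xi]$ for any integrable $\xi$, which is what converts raw $d^i_r$, $s^i$ into their $\mathcal F_t$-conditional versions in \eqref{eq:volterrabi}. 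This is the same bookkeeping used implicitly when passing between \eqref{eq:J^iN} and \eqref{eq:fredholm-foc}, so I would reference Remark~\ref{rem:J^iN-concise-form} and carry it out in one or two displayed lines rather than belaboring it.

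Once the representation in the form \eqref{eq:J^iN-concise-form} is established, the second assertion — that \eqref{eq:Pi-nonnegative-definite} implies Assumption~\ref{assum:nonnegative-definite} — is essentially an identification of operators. From $\boldsymbol F^i$ one reads the blocks $B^{ij}$, $\bar B^{ji}$, $C^{ijk}$ via Remark~\ref{rem:J^iN-concise-form}, assembles the matrix kernels $B$, $\bar B$ from \eqref{eq:B-Bbar}, and computes the symmetrized combination $B + \bar B^*$ (equivalently the operator $\boldsymbol B + \boldsymbol{\bar B}^*$). Because here $B=\bar B$ (both sides of the interaction in \eqref{eq:volterraf}--\eqref{eq:volterrag} come from the same quadratic forms $Q^i$, $S^i$ and the same linear term $q^i$), one gets $\boldsymbol B+\boldsymbol{\bar B}^* = \boldsymbol\Gamma$ after symmetrization, where $\Gamma$ is exactly the kernel written in the statement — the term $-\dl_{ij}(q^i)^\top G^{ii}(s,t)$ being the contribution of $\boldsymbol{\bar B}^*$ to the diagonal. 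Then $2\Lambda = 2\Pi$ since $\lambda^i = p^i$, so Assumption~\ref{assum:nonnegative-definite}, namely nonnegative-definiteness of $\boldsymbol B+\boldsymbol{\bar B}^* + 2\Lambda\boldsymbol I^N - c_0\boldsymbol I^N$, becomes precisely \eqref{eq:Pi-nonnegative-definite}.

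The main obstacle I expect is the Fubini/causality bookkeeping in extracting $F^i$: one has to interchange the order of integration in $\int_0^T\!\big(\int_0^t G^i(t,s)\al_s\,ds\big)^\top M^i\big(\int_0^t G^i(t,r)\al_r\,dr\big)dt$ (with $M^i = Q^i+(Q^i)^\top$, and the analogous rank-one term with $M^i = S^i+(S^i)^\top$ at $t=T$), correctly tracking that the outer variable $r$ in \eqref{eq:volterraFi} ranges over $[s\vee t, T]$, and then symmetrize in the pair $(t,s)$ to land on a kernel that is manifestly the one claimed; getting the indicators and the upper/lower limits exactly right, and confirming that the diagonal-only correction from the adjoint of the $q^i$-term matches $-\dl_{ij}(q^i)^\top G^{ii}(s,t)$, is where the care is needed. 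Everything else is a direct, if lengthy, expansion.
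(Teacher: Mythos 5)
Your proposal is correct and takes essentially the same route as the paper: expand \eqref{eq:volterraJ^i} using the Volterra representation of $Y^i$, apply Fubini's theorem and the tower property to land on the form \eqref{eq:J^iN-concise-form} with the stated $F^i,b^i,c^i,\lambda^i$, then read off $B$ and $\bar B$ via Remark~\ref{rem:J^iN-concise-form} and identify $\Gamma=B+\bar B^*$ so that \eqref{eq:Pi-nonnegative-definite} is exactly Assumption~\ref{assum:nonnegative-definite}. One small caveat: your justification ``because here $B=\bar B$'' is not true in general, since $B^{ij}$ is the $(i,j)$ entry of $F^i$ while $\bar B^{ij}$ is the $(i,j)$ entry of $F^j$ (different $Q^j,S^j,q^j$ and kernel products); this is harmless, because the identity $\Gamma_{ij}(t,s)=B^{ij}(t,s)+\bar B^{ji}(s,t)$ follows directly entrywise from the symmetry of $Q^i+(Q^i)^\top$ and $S^i+(S^i)^\top$ together with $\mathds{1}_{\{t>s\}}+\mathds{1}_{\{s>t\}}=1$ a.e., including the diagonal correction $-\dl_{ij}(q^i)^\top G^{ii}(s,t)$, which is all the paper's argument uses.
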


\begin{remark}
Note that by definition of the Volterra game, the processes $b^i$ defined in \eqref{eq:volterrabi} and random variables $c^i$ defined in \eqref{eq:volterraci} satisfy \eqref{eq:assumption-b-c}. Moreover, since all entries $F^i_{jk}$ of $F^i$ defined in \eqref{eq:volterraFi} are Volterra kernels in $\mathcal{G}$ and all $\lam^i=p^i$ defined in \eqref{eq:volterralambdai} are positive, Assumption~\ref{assum:A-B-C} is satisfied as well.  In particular, whenever the objective functionals in \eqref{eq:volterraJ^i} are strictly concave, the characterization of the Nash equilibrium via the system of stochastic Fredholm equations from Proposition~\ref{prop:fredholm-foc} holds. If additionally, condition \eqref{eq:Pi-nonnegative-definite} holds, the Volterra game can be solved explicitly by means of Theorem~\ref{thm:finite-player-equilibrium}.
\end{remark}

\begin{proof}[Proof of Lemma~\ref{lemma:Volterra-game}]
The first part of the lemma follows from a a straightforward application of Fubini's theorem and the tower property of conditional expectation.
The second part follows from recalling Remark \ref{rem:J^iN-concise-form}, reading off the entries of $B$ and $\bar B$ in \eqref{eq:volterraFi}, and defining $\Gamma$ to be the sum of $B$ and $\bar{B}^*$.
\end{proof}

Lemma~\ref{lemma:Volterra-game} demonstrates that the Volterra game fits into our framework developed in Section~\ref{subsec:definition-finite-player-game} and can be solved through Theorem~\ref{thm:finite-player-equilibrium}. However, there are various classes of network games where the state processes $X^i$ and $Z^i$ are not given explicitly as in \eqref{eq:volterraXi} and \eqref{eq:volterraZi}, but rather defined in differential form with drift terms involving $X^i$ and $Z^i$ again.
Therefore, before addressing more concrete examples in the next subsections, we first demonstrate that the dynamics \eqref{eq:volterraXi}, \eqref{eq:volterraZi} include any stochastic Volterra equation for the state variables $X^i$, where the drift has linear dependence in $X^i$ itself and in a weighted average $Z^i$ of the other states. For these purposes we introduce an interaction matrix $w\in\R^{N\times N}_+$ whose entries $w_{ij}\geq 0$ function as weights quantifying how much player $i$ takes into account the state of player $j$. In contrast to the matrices $w^N$ from Sections \ref{sec:infinite-player-game} and \ref{sec:convergence}, we will not always scale $w$ by $1/N$. Namely, in what follows, we assume the state variables $Z^i$ in \eqref{eq:volterraZi} to be given by
\be
Z^i\vcentcolon = \sum_{j=1}^N w_{ij}X^j,\quad \text{for }i=1,\ldots, N.
\ee
In order to show the correspondence of the games, we will employ resolvents of Volterra kernels, see Chapter~9.3 of \citet{gripenberg1990volterra} for a thorough treatment.
\begin{lemma}\label{lemma:volterra-linear}
Let $\smash{w\in\R^{N\times N}_+}$. For $i=1,\ldots, N$, let  $M^i$ be a progressively measurable processes satisfying $\int_0^T \E[(M^i_t)^2] dt  <\infty$ and  $H^i,K^i:[0,T]^2 \to \mathbb R$ be Volterra kernels in $\mathcal{G}$. Then the $N$-dimensional linear system of Volterra equations 
\be
\begin{aligned}\label{eq:volterra-linear}
        &X^i_t  = M^i_t + \int_0^t H^i(t,s) X^i_s ds + \int_0^t K^i(t,s) Z^i_s ds, \\
        &Z^i_t=  \sum_{j=1}^N w_{ij}X^j_t,
        \quad i=1,\ldots, N,\ t\in[0,T],
\end{aligned}  
\ee
admits a unique solution given by 
\be\label{eq:volterra-solution}
X_t =  M_t - \int_0^t R^{-(H+Kw)}(t,s)  M_sds,
\ee
where $X\vcentcolon = (X^1,\ldots,X^N)^\top$, $M\vcentcolon = (M^1,\ldots,M^N)^\top$, $H\vcentcolon=\operatorname{diag}(H^1,\ldots,H^N)$, $K\vcentcolon=\operatorname{diag}(K^1,\ldots,K^N)$, and $R^{-(H+Kw)}$ denotes the resolvent of $-(H+Kw)$.
\end{lemma}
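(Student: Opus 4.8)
The plan is to eliminate the auxiliary variables $Z^i$, thereby reducing the coupled system \eqref{eq:volterra-linear} to a single $N$-dimensional linear Volterra equation of the second kind in $X$ alone; then to invoke the classical resolvent theory for Volterra kernels of type $L^2$ on a bounded interval; and finally to transfer the resulting deterministic solution formula to the stochastic, progressively measurable, square-integrable setting.

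First I would substitute $Z^i_t=\sum_j w_{ij}X^j_t$ into the $X^i$-equation. Writing $X=(X^1,\dots,X^N)^\top$, $M=(M^1,\dots,M^N)^\top$, $H=\operatorname{diag}(H^1,\dots,H^N)$ and $K=\operatorname{diag}(K^1,\dots,K^N)$, and using that $(K(t,s)w)_{ij}=K^i(t,s)w_{ij}$, the system collapses to
\be
X_t = M_t + \int_0^t\big(H(t,s)+K(t,s)w\big)X_s\,ds,\qquad t\in[0,T].
\ee
Since the diagonal entries $H^i,K^i$ lie in $\mathcal G$, the block-diagonal kernels $H$ and $K$ belong to $L^2([0,T]^2,\R^{N\times N})$ and are Volterra; right-multiplying $K$ by the constant matrix $w$ preserves both the square-integrability and the lower-triangular support, so $H+Kw$, and hence $-(H+Kw)$, is again a Volterra kernel in $L^2([0,T]^2,\R^{N\times N})$.

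The core step is then to apply the resolvent theory of Chapter~9.3 in \citet{gripenberg1990volterra}: a Volterra kernel of type $L^2$ on $[0,T]$ admits a resolvent of type $L^2$, again Volterra, so that $R^{-(H+Kw)}\in L^2([0,T]^2,\R^{N\times N})$ exists, and the equation $X+\big(-(H+Kw)\big)\ast X=M$ has, for every deterministic $M\in L^2([0,T],\R^N)$, the unique solution $X=M-R^{-(H+Kw)}\ast M$, which is exactly \eqref{eq:volterra-solution}; uniqueness follows from the quasi-nilpotence of the Volterra operator induced by $H+Kw$ (equivalently, from a Volterra--Gronwall estimate). To pass to the stochastic setting I would argue pathwise: for $\P$-a.e.\ $\omega$ one has $M(\omega)\in L^2([0,T],\R^N)$, so the deterministic result yields a unique solution $X(\omega)$; since $R^{-(H+Kw)}$ is deterministic and square-integrable, the induced integral operator is Hilbert--Schmidt, hence bounded, on $L^2([0,T],\R^N)$, which gives $X\in L^2(\Omega\times[0,T],\R^N)$ from the assumed integrability of $M$, and $X$ inherits progressive measurability from $M$ because $\int_0^t R^{-(H+Kw)}(t,s)M_s\,ds$ is $\mathcal F_t$-measurable for each fixed $t$, the joint measurability in $(t,\omega)$ being obtained by the usual approximation of $M$ by simple processes. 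Recovering $Z^i_t=\sum_j w_{ij}X^j_t$ from $X$ then finishes the argument.

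I do not expect a serious obstacle here; the single point needing care is the bookkeeping of the resolvent sign convention. The minus sign in \eqref{eq:volterra-solution}, together with taking the resolvent of $-(H+Kw)$ rather than of $H+Kw$, is exactly what the convention $x+a\ast x=f\Rightarrow x=f-r\ast f$ used in \citet{gripenberg1990volterra} produces, so the reduced equation must be cast in the form $X+\big(-(H+Kw)\big)\ast X=M$ before the resolvent formula is quoted.
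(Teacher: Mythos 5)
Your proposal is correct and follows essentially the same route as the paper: substitute $Z^i$ to rewrite the system as $X_t-\int_0^t(H(t,s)+K(t,s)w)X_s\,ds=M_t$, then invoke the Volterra resolvent theory of Chapter~9.3 in \citet{gripenberg1990volterra} (existence of the resolvent of $-(H+Kw)$ and the unique-solution formula) to obtain \eqref{eq:volterra-solution}. The extra care you take with the sign convention and the pathwise/measurability transfer is fine but not a departure from the paper's argument.
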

\begin{proof}
Notice that the system \eqref{eq:volterra-linear} can be rewritten as follows,
\be\label{eq:volterra-rewritten}
M_t=X_t-\int_0^t \big(H(t,s)+K(t,s)w\big)X_sds.
\ee
Now since $H^i,K^i\in\mathcal{G}$ for all $i=1,\dots,N$, it follows that the matrix-valued Volterra kernel $-(H+Kw)$ admits a Volterra resolvent $R^{-(H+Kw)}$ (see Corollary 9.3.16 in \citet{gripenberg1990volterra}). Due to $\smash{\int_0^T \E[(M^i_t)^2] dt  <\infty}$ for all $i$,  \eqref{eq:volterra-rewritten} admits a unique solution given by \eqref{eq:volterra-solution} (see Theorem~9.3.6 in \citet{gripenberg1990volterra}).
\end{proof}

\begin{remark}\label{rem:volterra-linear}
A particular case of interest is the application of Lemma~\ref{lemma:volterra-linear} to controlled Volterra processes with $M^i$ in \eqref{eq:volterra-linear} given by
\be \label{eq:volterra-control-remark}
M^i_t =  N^i_t+ \sum_{j=1}^N\int_0^t L^{ij} (t,s)\al^j_s ds, \quad i=1,\ldots, N. 
\ee  
Here $N^i$ are progressively measurable processes satisfying $\int_0^T \E[(N^i_t)^2] dt  <\infty$, $L^{ij}$ are Volterra kernels in $\mathcal G$  and $\al=(\al^1,\dots,\al^N)$ are the controls. Plugging \eqref{eq:volterra-control-remark} into \eqref{eq:volterra-solution} shows that the dynamics \eqref{eq:volterra-linear} can be represented as, 
\be
X_t =  N_t -\int_0^tR^{-(H+Kw)}(t,s)  N_sds + \int_0^t \Big(L(t,s)-\big(R^{-(H+Kw)}\star L\big)(t,s)\Big)\al_sds. 
\ee
Here for any $G, H \in  L^2\left([0,T]^2,\mathbb R^{N\times N}\right)$ the $\star$-product is given by,  
\be
(G \star H)(t,s) = \int_0^T G(t,u) H(u,s)du, \quad  (t,s) \in [0,T]^2,
\ee
which is a well-defined kernel in $L^2([0,T]^2,\mathbb R^{N\times N})$ again (see Definition~9.2.3 and Theorem~9.2.4 in \citet{gripenberg1990volterra}).
This shows that the dynamics \eqref{eq:volterraXi}, \eqref{eq:volterraZi} include controlled stochastic Volterra equations for the state variables $X^i$, where the drift has linear dependence in $X^i$ itself and the weighted average $Z^i$ of the other states.
\end{remark}

\subsection{Heterogeneous inter-bank lending and borrowing models} \label{subsec:systemic-risk}
%Since the global financial crisis, systemic risk has become an important research area. 
In their seminal paper \cite{carmona2015mean}, Carmona~et~al. introduced an inter-bank borrowing and lending model in which the log-monetary reserves of $N$ banks are represented by a system of controlled diffusion processes that are coupled in a homogeneous way, and each bank has as control its rate of borrowing/lending to a central bank. In \cite{carmona2018systemic}, Carmona~et~al. extended their framework to a more realistic model which additionally allows delay in the controls and thereby the clearance of debt obligations, compelling the banks to account for their prior lending and borrowing activities. In \cite{sun2022mean}, Sun studied another extension of \citet{carmona2015mean} to a setting with heterogeneity among (groups of) banks that have relative concerns. Clearly, this feature is crucial as well to model inter-bank borrowing and lending as accurately as possible.

In this subsection we show how our framework accommodates the models on systemic risk studied in \citet{carmona2015mean,carmona2018systemic} and \citet{sun2022mean} and extends them into various directions. Most importantly, we allow for both delay in the controls and heterogeneity of the banks at the same time and provide explicit operator formulas for the associated Nash equilibrium.

%%%%%% Table of literature omitted for brevity %%%%%%%%%%
% \begin{table}[ht]
%     \centering
% \begin{tabular}{ |l|c|c| }
% \hline
%  \textbf{Model/Features}& \text{Controls with delay} & \text{Heterogeneous banks} \\
% \hline 
%  \citet{carmona2015mean}  &  \xmark &\xmark  \\
% \hline
% \citet{carmona2018systemic} & \cmark &\xmark\\
% \hline
% \citet{sun2022mean}& \xmark &\cmark\\
% \hline
% Model \ref{model:inter-bank}& \cmark &\cmark\\
% \hline
% \end{tabular}
%     \caption{Features of inter-bank borrowing and lending models in the literature}
% \end{table}

First, we present the following inter-bank lending and borrowing model which unifies and generalizes the models in the aforementioned references. The log-monetary reserves $X^i$ of $N\in\N$ banks are modeled as follows,
\be\label{eq:systemicXi}
dX^i_t = \left( \int_0^t \al^i_{t-s} \nu^i(ds)  +h^i(t) \right) dt + dV^i_t,\quad X^i_0=\xi^i,\quad i=1,\ldots, N,
\ee
with progressively measurable processes $h^i$ and $V^i$, signed measures $\nu^i$ on $[0,T]$ of locally bounded variation, square-integrable random variables $\xi^i$, and $\al^i_t=0$ for $t<0$. The control $\al^i$ of the $i$-th bank corresponds to the rate of lending or borrowing from a central bank. The delay in the control reflects the repayments after a fixed time. The canonical example is given by $\al^i_t - \al^i_{t-\tau }$ which corresponds to the case $\nu^i = \dl_0 - \dl_{\tau}$ for some fixed time $\tau \geq 0$. In this case, if the $i$-th bank borrowed from the central bank an amount $\al_t^idt$ at some time $t$, then $X^i_t$ increases by $\al_t^i dt$. In addition, since the amount needs  to be paid back to the central bank at a later time $t + \tau $, the log-monetary reserve  $X^i_{t+\tau}$  decreases by $\al^i_{(t+\tau) - \tau} dt=\al^i_t dt$, which explains the form of the drift $\al^i_t - \al^i_{t-\tau}$ at time $t$. 

In order to incorporate heterogeneous banks with relative concerns, consider an interaction matrix $w^{sys}\in\R^{N\times N}_+$ with nonnegative entries $w^{sys}_{ij}\geq 0$ and the individual weighted averages acting as benchmarks given by
\be \label{eq:systemicZi}
Z^i \vcentcolon= \sum_{i=1}^N w^{sys}_{ij} X^j,\quad i=1,\ldots,N.
\ee
For $i=1,\ldots,N$, the $i$-th bank chooses the strategy $\al^i$ to minimize the following cost functional involving $Z^i$, 
\be \label{eq:systemicJ^i}
J^i_{sys}(\al^i; \al^{-i}) =\mathbb E\left[ \int_0^T f^i_{sys}(X^i_t, Z^i_t , \al^i_t) dt + g^i_{sys}(X^i_T, Z^i_T) \right],
\ee
with the running and terminal costs given by
\be\begin{aligned}
f^i_{sys}(X^i, Z^i , \al^i) &= \frac{1}{2}(\al^i)^2 - \kappa^i \al^i(Z^i - X^i) + \frac{\varepsilon^i}{2}(Z^i -X^i )^2, \\
g^i_{sys}(X^i, Z^i) &= \frac{c^i}{2}(Z^i - X^i)^2.
\end{aligned}\ee
The running cost of borrowing/lending is given by $(\al^i)^2/2$, the parameter $\kappa^i\geq 0$ controls the incentive to borrow or lend depending on the difference with the weighted capitalization level of the others, the quadratic term in $(Z^i - X^i)^2$ in the running and terminal costs penalize deviation from the weighted average with $\eps^i>0$, $c^i\geq 0$. The condition  $(\kappa^i)^2< \eps^i$ ensures the strict convexity of the functions $f^i_{sys}$.

\noindent \textbf{The correspondence with our Volterra game framework.} Setting 
\be\label{eq:systemicGii}
G_1^{ii}(t,s):= \mathds{1}_{\{t>s\}}\nu^i([0,t-s]), \quad  0\leq t,s\leq T,\quad i=1,\ldots,N\ee
writing \eqref{eq:systemicXi} in integral form, and applying Lemma~3.10 from \citet{abijaber2023equilibrium},  we obtain the following Volterra representation for $X^i$ in \eqref{eq:systemicXi}, 
\be\label{eq:systemic-representation}
   X_t^i =  \xi^i + \int_0^{t} h^i(s)ds + V_t^i +  \int_0^t G_1^{ii}(t,s) \al^i_s ds,\quad i=1,\ldots,N.
\ee

\begin{tcolorbox}
The Volterra representation \eqref{eq:systemic-representation} demonstrates that the model corresponds to the stochastic Volterra game from Section~\ref{subsec:Volterra-game} where the controlled variables $X^i$  in \eqref{eq:volterraXi} and $Z^i$ in \eqref{eq:volterraZi} have coefficients 
\be\begin{aligned}
P_t^i &= \xi^i + \int_0^{t} h^i(s)ds + V_t^i,\quad &&G_1^{ij}(t,s)=\begin{cases}
\mathds{1}_{\{t>s\}}\nu^i([0,t-s]),\quad\text{for }i=j,
\\0,\hspace{3.37cm}\text{for }i\neq j,
\end{cases} \vspace{-1cm}\\
 R_t^i&=\sum_{j=1}^Nw^{sys}_{ij}P_t^j,\quad && G_2^{ij}(t,s)= w^{sys}_{ij}G_1^{jj}(t,s). 
\end{aligned}\ee
The minimization of the cost functional in \eqref{eq:systemicJ^i} corresponds to maximizing the objective functionals in \eqref{eq:volterraJ^i} given in terms of $f^i_{Vol}$ as in \eqref{eq:volterraf} and $g^i_{Vol}$ as in \eqref{eq:volterrag} with parameters for $i=1,\dots, N$ given by
\be
    p^i = \frac 1 2,  \quad 
    Q^i =  \frac{\eps^i}{2} \begin{pmatrix}
        1  & -1  \\
        -1   & 1
     \end{pmatrix},  \quad 
    q^i = \kappa^i\begin{pmatrix}
        -1  \\
        1
     \end{pmatrix},  \quad 
    S^i =   \frac{c^i}{\eps^i} Q^i, \quad 
    s^i   = 0. 
\ee
\end{tcolorbox}

In the following remarks we explain how our general approach unifies and extends existing systemic risk models that appear in the literature.   
\begin{remark}\label{rem:systemic-correspondence}
Notice that setting $w^{sys}$ to be the matrix with $1/N$ in all entries, choosing for all $i$ the state processes and the cost functionals to be the same with $h^i=0$ and the noise processes $V^i$ to be combinations of idiosyncratic and common Brownian noises, and defining for all~$i$ the measure $\nu^i(ds)=\mathds{1}_{\{s\leq\tau\}}\theta(ds)$, recovers the mean-field framework from \citet{carmona2018systemic}, where $\theta$ denotes the signed measure on $[0,\tau]$ therein. If additionally, the measures $\nu^i$ are chosen as $\dl_0$ for all $i$, the original model from \citet{carmona2015mean} without delay in the controls is obtained. Note that in \citet{carmona2015mean}, an additional interacting
term $\smash{\frac{1}{N}\sum_{j=1}^N(X_t^j-X_t^i)}$ in the drift of $X^i$ in \eqref{eq:systemicXi} representing the borrowing/lending rate of bank $i$ from/to bank $j$ was considered, which can be incorporated into our model as well via an application of Lemma~\ref{lemma:volterra-linear} and Remark \ref{rem:volterra-linear} with $w=w^{sys}$. Our general approach does not only yield explicit operator formulas for the Nash equilibria in the frameworks of \citet{carmona2015mean,carmona2018systemic}, but more importantly allows the network of banks to be represented by a weighted (and possibly directed) graph with adjacency matrix $w^{sys}$. 
% by assigning arbitrary nonnegative interaction weights for each ordered pair $(i,j)\in\{1,\ldots,N\}^2$, 
\end{remark}

\begin{remark}
We can also recover the model from \citet{sun2022mean}, where $d\in\N$ groups of $N_k\in\N$ homogeneous banks with log-monetary reserves $X^{(k)i}$ for $i=1,\ldots,N_k$ and $k=1,\ldots,d$ were considered. This is done by setting $\nu^i=\dl_0$ for all $i$ again, defining the processes $V^i$ to be combinations of idiosyncratic Brownian noises and common Brownian noises within and among the groups of banks, and finally choosing the weights in the matrix $w^{sys}$ appropriately to turn the state process $Z^{(k)i}$ associated with bank $i\in\{1,\ldots,N_k\}$ belonging to group $k\in\{1,\ldots ,d\}$ into a weighted sum of the group capitalization average $\smash{\frac{1}{N_k}\sum_{j=1}^{N_k} X^{(k)j}}$ and the global capitalization average $\smash{\frac{1}{N}\sum_{k=1}^d\sum_{j=1}^{N_k} X^{(k)j}}$. In \citet{sun2022mean}, the Nash equilibrium in the case of two homogeneous groups of banks was derived semiexplicitly in terms of coupled Riccati equations.
Our main results do not only provide novel explicit operator formulas for the Nash equilibrium in the case of $d\in\N$ groups, but also allow with no additional effort several non-trivial, realistic extensions. That is, compared to \citet{sun2022mean}, we additionally allow for more general heterogeneity as expounded above, for controls with delay in the sense of \citet{carmona2018systemic}, for an interacting term in the drift of $X^i$ in \eqref{eq:systemicXi} by using Lemma~\ref{lemma:volterra-linear} and Remark \ref{rem:volterra-linear}, and for more general combinations of idiosyncratic and common noises given by a progressively measurable process.
\end{remark}
In Example \ref{ex:systemic} of Appendix \ref{appendix}, we study in greater detail the case of one repayment after a fixed time $0\leq\tau\leq T$ and verify Assumption~\ref{assum:nonnegative-definite} under various conditions on the interaction matrix $w^{sys}$ and the model's coefficients $\kappa^i,\eps^i,c^i$. 
% To conclude this subsection, we restate , which is needed to derive the Volterra representation in \eqref{eq:systemic-representation}.
% \begin{lemma}\label{lemma:Fubini}
% Let $\nu$ be a measure on $[0,\infty)$ of locally finite total variation.  Define $G(t)\vcentcolon=\nu([0,t])$ for all $t\geq 0$. Then, for any locally integrable function $\al$ it holds that 
%     \be
% \int_0^t G(t-s) \al_s ds = \int_0^t \left(\int_0^l  \al_{l-r}  \nu(dr) \right) dl, \quad t \geq 0. 
% \ee
% \end{lemma}
\subsection{Stochastic differential network games}
We show that the Volterra game framework introduced in Section~\ref{subsec:Volterra-game} accommodates linear-quadratic stochastic differential network games as studied by \citet{aurell2022stochastic}, who consider the following $N$-player game. For $i=1,\dots,N$, player $i$ chooses a control $\al^i\in\mathcal{A}$ in order to minimize the objective functional
\be\label{eq:networkJ^i}
      J_{net}^i(\al^i; \al^{-i}) \vcentcolon=\E\left[ \int_0^T  f_{net}^{i}(X^i_t , \al^i_t, Z^i_t) dt + g_{net}^i(X^i_T, Z^i_T) \right],\quad i=1,\dots,N,\hspace{5mm}
\ee
where the player states $(X^i)_{i=1}^N$ are subject to the dynamics
\be
\begin{aligned}
dX_t^i&=\big( a_{net}(i)X_t^i+ b_{net}(i)\al_t^i+ c_{net}(i)Z_t^i\big)dt+d\bar B_t^i,\quad X_0^i=\xi^i\label{eq:networkXiZi}\\
 Z_t^i&=\frac{1}{N}\sum_{j=1}^N {w}^{net}_{ij}X_t^j,\quad  i=1,\dots, N,\ \ t\in[0,T],
\end{aligned}
\ee
with coefficients $a_{net}(i), b_{net}(i), c_{net}(i)\in\R$, independent Brownian motions $\bar B_t^i$ and independent  random variables $ \xi^i$  for $i=1,\dots, N$, as well as a symmetric weight matrix $w^{net}\in[0,1]^{N\times N}$. The functions $ f_{net}^{i}$ and $g_{net}^{i}$ are given by 
\be
f_{net}^{i}=\frac{1}{2}x^\top C^{net}_f(i) x,\quad g_{net}^{i}=\frac{1}{2}y^\top  C^{net}_g(i) y,\quad\text{for } x\in\R^3,y\in\R^2,
\ee
where $ C^{net}_f(\cdot)\in\R^{3\times 3}$ and $ C^{net}_g(\cdot)\in\R^{2\times 2}$ are symmetric matrices.
% \begin{remark}
% We note that in \cite{aurell2022stochastic},  in order to demonstrate the connection of this finite-player game to a corresponding graphon game, the players, their individual coefficients and their objective functionals are indexed by a compact interval $I\subset\R$ and then chosen independently from $I$ according to some distribution, which in particular allows for random weights in the aggregates $Z^i$ incorporated via sampling from a graphon. However, for the analysis of the finite-player equilibrium, we consider the quenched case, so that the players and weights are fixed and the above framework is obtained.
% \end{remark}

\noindent \textbf{The correspondence with our Volterra game framework.}
First, using the notation \eqref{eq:notation-alpha}, notice that the dynamics \eqref{eq:networkXiZi} can be summarized by the following linear system of SDEs,
\be\label{eq:networkX}
dX_t=\left( {\boldsymbol{a}} X_t+{\boldsymbol{b}}\al_t\right) dt+d\bar B_t,\quad X_0= \xi,
\ee
where
\be
\begin{aligned}\label{eq:network-matrices}
{\boldsymbol{a}}&\vcentcolon = \operatorname{diag}\big( a_{net}(1),\ldots, a_{net}(N)\big)+\frac{1}{N}\operatorname{diag}\big( c_{net}(1),\ldots,c_{net}(N)\big)w^{net},\\
{\boldsymbol{b}}&\vcentcolon = \operatorname{diag}\big( b_{net}(1),\ldots, b_{net}(N)\big),\quad \bar B\vcentcolon=\big(\bar B^1,\ldots, \bar B^N\big)^\top,\quad \xi\vcentcolon=\big( \xi^1,\ldots,  \xi^N\big)^\top.
\end{aligned}
\ee
Using \eqref{eq:networkX} and \eqref{eq:network-matrices}, it follows that \eqref{eq:networkXiZi} can be written in closed form as follows,
\be\label{eq:networkX-closed-form}
X_t=e^{{\boldsymbol{a}}t}\xi+\int_0^te^{{\boldsymbol{a}}(t-s)}{\boldsymbol{b}}\al_sds+\int_0^te^{{\boldsymbol{a}}(t-s)}d\bar B_s,\quad t\in[0,T].
\ee
\begin{tcolorbox}
The closed form \eqref{eq:networkX-closed-form} reveals that the stochastic differential network game studied by \citet{aurell2022stochastic} corresponds to the stochastic Volterra game from Section~\ref{subsec:Volterra-game} with controlled variables $X^i$ in~\eqref{eq:volterraXi} and $Z^i$ in~\eqref{eq:volterraZi} and coefficients given by
\be
\begin{aligned}
&P^i_t=\big(e^{{\boldsymbol{a}}t}\xi+\int_0^te^{{\boldsymbol{a}}(t-s)}d\bar B_s\big)_i\ ,\quad
&&G_1^{ij}(t,s)=\big(\mathds{1}_{\{t>s\}}e^{{\boldsymbol{a}}(t-s)}{\boldsymbol{b}}\big)_{ij},\\
&R^i_t=\Big(\frac{1}{N} w^{net}\big(e^{{\boldsymbol{a}}t}\xi+\int_0^te^{{\boldsymbol{a}}(t-s)}d\bar B_s\big)\Big)_i,\hspace{-3mm} 
&&G_2^{ij}(t,s)=\Big(\frac{1}{N} w^{net}\big(\mathds{1}_{\{t>s\}}e^{{\boldsymbol{a}}(t-s)}{\boldsymbol{b}}\big)\Big)_{ij}.
\end{aligned}
\ee

Minimizing the objective functionals in \eqref{eq:systemicJ^i} corresponds to maximizing the objective functionals in \eqref{eq:volterraJ^i} given in terms of $f^i_{Vol}$ as in \eqref{eq:volterraf} and $g^i_{Vol}$ as in \eqref{eq:volterrag} with parameters for $i=1,\dots, N$ given by
\be
\begin{aligned}
    p^i &= \frac{1}{2}[ C^{net}_f(i)]_{22} ,   \
    &&Q^i =  \frac{1}{2}{\begin{pmatrix} 
    [ C^{net}_f(i)]_{11} & [ C^{net}_f(i)]_{13} \\ [ C^{net}_f(i)]_{31} & [ C^{net}_f(i)]_{33}                                   \end{pmatrix}},  \quad 
    q^i = -\begin{pmatrix} 
    [ C^{net}_f(i)]_{12} \\ [ C^{net}_f(i)]_{23}   \end{pmatrix},  \\
    S^i &=  \frac{1}{2}[ C^{net}_g(i)],  
    &&s^i = 0. 
\end{aligned}\ee
\end{tcolorbox}
\begin{remark}\label{rem:improvements}  \citet{aurell2022stochastic} characterized the Nash equilibrium of the above finite-player network game as a system linear system of FBSDEs without the derivation of a solution. Our main results do not only yield explicit operator formulas for the equilibrium, but also allow without additional effort various extensions of the game. First, our framework allows the introduction of a direct dependence of player $i$'s state $X^i$ in \eqref{eq:networkXiZi} on the other players' controls $\al^j$, which can be incorporated by updating the coefficient matrix $\boldsymbol{b}$ in \eqref{eq:networkX} to a non-diagonal matrix. Second, we can implement non-symmetric heterogeneous interactions of the players in the sense that player $i$ does not care equally much about player $j$'s state as vice versa, by employing a non-symmetric interaction matrix $w^{net}$ in \eqref{eq:networkXiZi}. Third, in addition to the idiosyncratic Brownian noise we can include a common Brownian noise $\bar B^*$ by replacing $\bar B^i_t$ by $\bar B^i_t+\bar B^*_t$ in \eqref{eq:networkXiZi} and updating the processes $P^i_t$ and $R^i_t$ accordingly. Finally, we also allow the use of a more general independent or correlated noise.
\end{remark}

\subsection{Stochastic differential games on simple graphs}
We show the correspondence of our Volterra game framework from Section~\ref{subsec:Volterra-game} with stochastic differential games on simple graphs, studied by \citet{lacker2022case} and extended by \citet{hu2024finite}. In these references the following games were considered.

Let $\mathfrak{S} = (V_{sim},E_{sim})$ be a connected simple graph, where
$V_{sim} = \{v_1, \ldots , v_N\}$ denotes the set of vertices, and each edge $e\in E_{sim}$ is an unordered pair $e = (v, v')$ for some $v, v'\in V$. Two vertices $v, v'\in V_{sim}$ are said to be adjacent $v\sim v'$ if and only if $(v, v')\in E_{sim}$. The degree $d_v$ of the vertex $v\in V_{sim}$ is the number of edges that are connected to $v$.
Define the symmetric normalized Laplacian $L^{\mathfrak{S}}\in\R^{N
\times N}$ of $\mathfrak{S}$ as
\be\label{eq:laplacian}
L_{ij}^{\mathfrak{S}}\vcentcolon=\begin{cases}
    1,\hspace{24mm}\text{if }i=j,\\
    -(d_{v_i}d_{v_j})^{-\frac{1}{2}},\hspace{5mm}\text{if }i\neq j\text{ and }v_i\sim v_j,\\
    0,\hspace{24mm}\text{else.}
\end{cases}
\ee
For $i=1,\dots,N$, each player $i$ affects through her control $\al^i\in\mathcal{A}$ her individual state process ${X}^i$ with dynamics given by
\be
\begin{aligned}
\label{eq:graphs-dynamics}
d{X}_t^i&=\Big[\bar{a}( {Z}_t^i -{X}_t^i  )+\al_t^i\Big]dt+{\bar{\sigma}}d\bar{B}_t^i,\quad {X}_0^i=\bar{\xi}^i,\\
{Z}_t^i&=\frac{1}{\sqrt{d_{v_i}}} \sum_{j:v_j \sim v_i}  \frac{1}{\sqrt{d_{v_j}}}{X}_t^j, \quad i=1,\ldots, N, \  t\in [0,T],
\end{aligned}
\ee
for some initial states $\bar{\xi}^i$, independent Brownian motions $\bar{B}_t^i$,  speed of mean reversion ${a}_{sim}\geq 0$, and volatility ${\bar{\sigma}}>0$. Player $i$ seeks to minimize the cost functional
\be\label{eq:graphs-Ji}
{J}_{sim}^i(\al^i;\al^{-i}):=\E\Big[\int_0^T {f}_{sim}^i({X}_t^i,{Z}_t^i,\al_t^i)dt+g_{sim}^i({X}_T^i,{Z}_T^i)    \Big],\quad i=1,\ldots, N,
\ee
with the running and terminal costs given by
\be\begin{aligned}
f_{sim}^i({X}^i,{Z}^i, \al^i) &= \frac{(\al^i)^2}{2} - {q}_{sim} \al^i({Z}^i-{X}^i) + \frac{{\varepsilon}_{sim}}{2}({Z}^i-{X}^i)^2, \\
g_{sim}^i({X}^i,{Z}^i) &= \frac{c_{sim}}{2}({Z}^i-{X}^i)^2,
\end{aligned}\ee
where $q_{sim},{\eps}_{sim},c_{sim}\geq 0$, and the condition $q_{sim}^2\leq {\eps}_{sim}$ ensures convexity of the $f_{sim}^i$. When $\bar{a}=q_{sim}=\eps_{sim}=0$ and $\mathfrak{S}$ is a vertex-transitive graph, the framework recovers the model from \citet{lacker2022case}, and when $\mathfrak{S}$ is the complete graph with $N$ vertices, it recovers the systemic risk model from \citet{carmona2015mean}, which has already been addressed in Section~\ref{subsec:systemic-risk}.

\noindent \textbf{The correspondence with our Volterra game framework.}
First, recalling the notation \eqref{eq:notation-alpha} and the definition of the symmetric normalized Laplacian $L^{\mathfrak{S}}$ in \eqref{eq:laplacian}, the dynamics \eqref{eq:graphs-dynamics} can be expressed by the following $N$-dimensional system of SDEs,
\be\label{eq:graphsX-SDE}
dX_t=\big( -\bar{a}L^{\mathfrak{S}} X_t+\al_t\big) dt+\bar{\sigma} d \bar B_t,\quad {X}_0= \bar{\xi},\quad t\in[0,T],
\ee
where
$$
{X}_t\vcentcolon=( {X}_t^1,\ldots,  {X}_t^N)^\top,\quad
\bar B_t\vcentcolon=( \bar B_t^1,\ldots,  \bar B_t^N)^\top,\quad
{\bar{\xi}}\vcentcolon=( {\bar{\xi}}^1,\ldots,  {\bar{\xi}}^N)^\top.
$$
The representation \eqref{eq:graphsX-SDE} allows us to write \eqref{eq:graphs-dynamics} in closed form as follows,
\be\label{eq:graphsX-closed-form}
{X}_t=e^{-\bar{a}L^{\mathfrak{S}}t}{\bar{\xi}} +\int_0^te^{-\bar{a}L^{\mathfrak{S}}(t-s)}\al_sds+\int_0^te^{-\bar{a}L^{\mathfrak{S}}(t-s)}\bar{\sigma}d\bar B_s,\quad t\in[0,T].
\ee
\begin{tcolorbox}
The closed form \eqref{eq:graphsX-closed-form} shows that stochastic differential games on graphs as studied by \citet{lacker2022case} and \citet{hu2024finite} correspond to stochastic Volterra games from Section~\ref{subsec:Volterra-game} with controlled variables $X^i$ in~\eqref{eq:volterraXi} and $Z^i$ in~\eqref{eq:volterraZi} and coefficients given by
\be
\begin{aligned}
&P^i_t=\big(e^{-\bar{a}L^{\mathfrak{S}}t}{\bar{\xi}}+\int_0^te^{-\bar{a}L^{\mathfrak{S}}(t-s)}\bar{\sigma}d\bar B_s\big)_i\, ,\ \ 
&&G_1^{ij}(t,s)=\big(\mathds{1}_{\{t>s\}}e^{-\bar{a}L^{\mathfrak{S}}(t-s)}\big)_{ij},\\
&R^i_t=\Big(\big(\operatorname{Id}^N-L^{\mathfrak{S}}\big)P_t\Big)_{i} \ ,
&&G_2^{ij}(t,s)=\Big(\big(\operatorname{Id}^N-L^{\mathfrak{S}}\big)G_1(t,s)\Big)_{ij}.
\end{aligned}
\ee
Minimizing the cost functionals in \eqref{eq:graphs-Ji} corresponds to maximizing the objective functionals in \eqref{eq:volterraJ^i} given in terms of $f^i_{Vol}$ as in \eqref{eq:volterraf} and $g^i_{Vol}$ as in \eqref{eq:volterrag} with parameters for $i=1,\dots, N$ given by
\be
    p^i = \frac 1 2,  \,  
    Q^i =  \frac{{\eps}_{sim}}{2} \begin{pmatrix}
        1  & -1  \\
        -1   & 1
     \end{pmatrix},  \,  
    q^i = q_{sim}\begin{pmatrix}
        -1  \\
        1
     \end{pmatrix},  \,  
    S^i =   \frac{c_{sim}}{2} \begin{pmatrix}
        1  & -1  \\
        -1   & 1
     \end{pmatrix},  
    s^i   = 0.
\ee
\end{tcolorbox}
\begin{remark}
Notice that \citet{lacker2022case} and \citet{hu2024finite} studied Markovian Nash equilibria, that is, the players choose controls from the set of (full-information) Markovian controls, i.e. the set of Borel-measurable functions $\al:[0,T]\times\R^N\to\R$ satisfying a suitable integrability condition.
% \be
% \sup_{(t,x)\in[0,T]\times\R^N}\frac{|\al(t,x)|}{1+|x|}<\infty.
% \ee
In our framework, players choose controls from the set $\mathcal{A}$ of progressively measurable processes in $L^2([0,T]\times\Omega,\R)$, leading to the concept of open-loop Nash equilibria. Those can be derived explicitly via Theorem~\ref{thm:finite-player-equilibrium}, which complements the results of the two aforementioned papers. Moreover, our framework allows for various extensions of the game, such as the ones discussed in Remark \ref{rem:improvements}.
\end{remark}

\section{Proofs of Section~\ref{sec:finite-player-game}}\label{sec:proofs-finite}
Since the number of agents $N\in\N$ is fixed, we omit the superscript $N$ from $\al^{i,N}, \al^{-i,N}$, $\al^N$ and $J^{i,N}$ defined in \eqref{eq:notation-alpha} and \eqref{eq:J^iN} throughout this section. We first establish the strict concavity property of the map $\al^{i} \mapsto J^i(\al^{i};\al^{-i})$, which is crucial for the derivation of the Nash equilibrium.
\begin{lemma} \label{lemma:concave-finite} 
Let $i \in \{1,\ldots, N\}$. Then under Assumptions~\ref{assum:A-B-C} and \ref{assum:nonnegative-definite}, for any $\al^{-i} \in \mathcal{A}^{N-1}$ fixed, the functional $\al^{i} \mapsto J^{i}(\al^{i};\al^{-i})$ in~\eqref{eq:J^iN} is strictly concave in $\al^{i} \in \mathcal{A}$.
\end{lemma}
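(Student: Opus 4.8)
The plan is to show that for fixed $\al^{-i}\in\mathcal{A}^{N-1}$, the map $\al^i\mapsto J^i(\al^i;\al^{-i})$ is, up to affine terms, the negative of a coercive quadratic form in $\al^i$, so that strict concavity follows. First I would isolate the dependence of \eqref{eq:J^iN} on $\al^i$: the only quadratic-in-$\al^i$ contribution comes from the term $-\langle\al^i,\boldsymbol A^i\al^i\rangle_{L^2}=-\langle\al^i,(\boldsymbol B^{ii}+\lam^i\boldsymbol I)\al^i\rangle_{L^2}$, since the cross terms $\langle\al^i,(\boldsymbol B^{ij}+(\boldsymbol{\bar B}^{ji})^*)\al^j\rangle_{L^2}$ and the $\langle b^{ij},\cdot\rangle_{L^2}$ terms are affine in $\al^i$, and the $\boldsymbol C^{ijk}$ term does not involve $\al^i$ at all (as $j,k\neq i$). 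Hence, writing $\al^i_\theta=\theta\beta+(1-\theta)\al^i$ for $\beta\in\mathcal A$ and using bilinearity, it suffices to show that $\al^i\mapsto \E[\langle\al^i,\boldsymbol A^i\al^i\rangle_{L^2}]$ is a strictly convex functional on $\mathcal A$, equivalently that there is $c>0$ with $\E[\langle\al^i,\boldsymbol A^i\al^i\rangle_{L^2}]\geq c\,\E[\|\al^i\|_{L^2}^2]$ for all $\al^i\in\mathcal A$.

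The key step is to relate the coercivity of $\boldsymbol A^i$ (a scalar operator on $L^2([0,T],\R)$) to Assumption~\ref{assum:nonnegative-definite} (a coercivity statement for the $N$-dimensional operator $\boldsymbol B+\boldsymbol{\bar B}^*+2\Lambda\boldsymbol I^N$). I would test Assumption~\ref{assum:nonnegative-definite} on functions $f\in L^2([0,T],\R^N)$ supported only in the $i$-th coordinate, i.e. $f=g\,\vec e_i$ with $g\in L^2([0,T],\R)$. For such $f$, $\langle f,(\boldsymbol B+\boldsymbol{\bar B}^*)f\rangle_{L^2,N}=\langle g,(\boldsymbol B^{ii}+(\boldsymbol{\bar B}^{ii})^*)g\rangle_{L^2}=\langle g,(\boldsymbol B^{ii}+(\boldsymbol B^{ii})^*)g\rangle_{L^2}=2\langle g,\boldsymbol B^{ii}g\rangle_{L^2}$ by Remark~\ref{rem:nonnegative-operators} and the fact that $\boldsymbol{\bar B}^{ii}$ is a copy of $\boldsymbol B^{ii}$, while $\langle f,2\Lambda f\rangle_{L^2,N}=2\lam^i\langle g,g\rangle_{L^2}$. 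Thus Assumption~\ref{assum:nonnegative-definite} gives $2\langle g,\boldsymbol B^{ii}g\rangle_{L^2}+2\lam^i\|g\|_{L^2}^2\geq c_0\|g\|_{L^2}^2$, i.e. $\langle g,\boldsymbol A^i g\rangle_{L^2}=\langle g,\boldsymbol B^{ii}g\rangle_{L^2}+\lam^i\|g\|_{L^2}^2\geq \tfrac{c_0}{2}\|g\|_{L^2}^2$ for every $g\in L^2([0,T],\R)$. Applying this pathwise with $g=\al^i(\omega,\cdot)$ and taking expectations yields $\E[\langle\al^i,\boldsymbol A^i\al^i\rangle_{L^2}]\geq\tfrac{c_0}{2}\E[\|\al^i\|_{L^2}^2]$, which is the desired strict convexity with constant $c_0/2>0$.

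Finally I would assemble the pieces: for $\beta\neq\al^i$ in $\mathcal A$ and $\theta\in(0,1)$, expanding $J^i(\al^i_\theta;\al^{-i})$ and using that all terms except $-\E[\langle\al^i,\boldsymbol A^i\al^i\rangle_{L^2}]$ are affine in $\al^i$ gives
\[
J^i(\al^i_\theta;\al^{-i})-\theta J^i(\beta;\al^{-i})-(1-\theta)J^i(\al^i;\al^{-i})=\theta(1-\theta)\,\E\big[\langle\beta-\al^i,\boldsymbol A^i(\beta-\al^i)\rangle_{L^2}\big]\geq \theta(1-\theta)\tfrac{c_0}{2}\E[\|\beta-\al^i\|_{L^2}^2]>0,
\]
establishing strict concavity. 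The main obstacle is essentially bookkeeping: one must carefully verify that the cross-interaction and noise terms in \eqref{eq:J^iN} are genuinely affine (not quadratic) in $\al^i$ — in particular that the $\boldsymbol C^{ijk}$ double sum omits the index $i$ — and that the restriction of Assumption~\ref{assum:nonnegative-definite} to coordinate-$i$-supported test functions correctly produces the scalar coercivity of $\boldsymbol A^i$; beyond this, the measurability/integrability needed to pass from pathwise inequalities to their expectations is routine given the definition of $\mathcal A$.
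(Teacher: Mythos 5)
Your proposal is correct and follows essentially the same route as the paper: the concavity gap reduces to $\theta(1-\theta)\,\E[\langle\beta-\al^i,\boldsymbol A^i(\beta-\al^i)\rangle_{L^2}]$, and coercivity of $\boldsymbol A^i$ with constant $c_0/2$ is obtained exactly as in the paper by testing Assumption~\ref{assum:nonnegative-definite} on functions supported in the $i$-th coordinate, using Remark~\ref{rem:nonnegative-operators} and that $\boldsymbol{\bar B}^{ii}$ is a copy of $\boldsymbol B^{ii}$. The only cosmetic difference is that you isolate the affine terms explicitly before expanding, whereas the paper computes the convex-combination difference directly.
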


\begin{proof}
Fix $i \in \{1,\ldots, N\}$ and $\al^{-i} \in \mathcal{A}^{N-1}$. 
Then, for any $\al^i, \beta^i \in \mathcal{A}$ such that $\al^i \neq \beta^i$, $d\P \otimes dt$-a.e.~on $\Omega \times [0,T]$, and for all $\varepsilon \in (0,1)$, a direct computation yields 
\begin{equation}
    \begin{aligned}\label{eq:concave-finite}
        & J^{i}(\varepsilon \al^{i} + (1-\varepsilon) \beta^{i} ; \al^{-i}) - \varepsilon J^{i}(\al^{i};  \al^{-i}) - (1-\varepsilon) J^{i}(\beta^{i};  \al^{-i}) \\[0.5ex]
        & = \varepsilon (1-\varepsilon) \E\left[ \langle \al^i - \beta^i , \boldsymbol{A}^i (\al^i - \beta^i) \rangle_{L^2} \right] \\[0.5ex]    
        & = \varepsilon (1-\varepsilon)  \E\left[ \langle \al^i - \beta^i ,\boldsymbol{B}^{ii} (\al^i - \beta^i) +\lambda^i(\al^i- \beta^i)\rangle_{L^2} \right],
    \end{aligned}
\end{equation}
where we used \eqref{eq:A^i} in the last equality. Recall that we denote by $(\vec{e_i})_{i=1}^N$ the standard basis of $\R^N$. Now, by defining 
\be 
f:\Omega\times [0,T]\to\R^N,\quad f(t)\vcentcolon =\vec{e_i} (\al^i_t - \beta^i_t),
\ee
and using \eqref{prod-id} and Assumption~\ref{assum:nonnegative-definite}, it follows from \eqref{eq:concave-finite} that
\be
\begin{aligned}
        & J^{i}(\varepsilon \al^{i} + (1-\varepsilon) \beta^{i} ; \al^{-i}) - \varepsilon J^{i}(\al^{i};  \al^{-i}) - (1-\varepsilon) J^{i}(\beta^{i};  \al^{-i}) \\[0.5ex]
        &=\varepsilon (1-\varepsilon)  
        \E\Big[\langle f, \frac{1}{2}(\boldsymbol{B}+\boldsymbol{\bar{B}}^*)f+\Lambda f\rangle_{L^2,N}\Big]\\[0.5ex]
        &\geq \varepsilon (1-\varepsilon) \E\Big[\frac{  c_0}{2}\langle f, f\rangle_{L^2,N}\Big]\\[0.5ex]
        &=\varepsilon (1-\varepsilon) \frac{  c_0}{2} \E\Big[\| \al^i- \beta^i\|_{L^2}^2\Big]\\[0.5ex]
        &>0
\end{aligned}
\ee
where we used that $\eps,  c_0>0$ and the fact that $\| \al^i- \beta^i\|_{L^2}^2 >0$, $\P$-a.s..
\end{proof}

Fix $i \in \{1,...,N\}$ and $\al^{-i} \in \mathcal{A}^{N-1}$. Lemma~\ref{lemma:concave-finite} states that the objective  functional $\al^i \mapsto J^i(\al^i; \al^{-i})$ of player $i$'s best response to all other players' fixed strategies $\al^{-i}$ is strictly concave. Hence, it admits a unique maximizer characterized by the critical point at which the G\^ateaux derivative
\be \label{eq:gateaux}
    \langle  \nabla  J^i(\al^i; \al^{-i}) , \beta^i  \rangle = \lim_{\varepsilon \to 0} \frac{ J^i(\al^i + \varepsilon \beta^i; \al^{-i}) -  J^i(\al^i; \al^{-i})}{\varepsilon},
\ee
vanishes for all $\beta^i\in \mathcal{A}$. Therefore, a control $\al^i \in \mathcal A$ maximizes \eqref{eq:J^iN} if and only if $\al^i$ satisfies the first order condition 
\be \label{eq:foc} 
 \langle \nabla J^i(\al^i; \al^{-i}) , \beta^i  \rangle  = 0, \quad \text{for all } \beta^i \in \mathcal{A},
\ee
(see Propositions~1.2 and 2.1 in Chapter~II of \citet{ekeland1999convex}).
This allows the derivation of a system of stochastic Fredholm equations which characterizes the Nash equilibrium.
\begin{proof}[Proof of Proposition~\ref{prop:fredholm-foc}]
Fix $i \in \{1,...,N\}$ and $\al^{-i} \in \mathcal{A}^{N-1}$. Plugging \eqref{eq:J^iN} into \eqref{eq:gateaux}, a direct computation and Fubini's theorem yield that for any $\al^i,\beta^i\in\mathcal{A}$,
\be
\begin{aligned}\label{eq:foc2}
&\langle \nabla  J^i(\al^i; \al^{-i}) , \beta^i  \rangle   \\
&=\E\Big[ \langle \beta^i, -\big(\boldsymbol{A}^i+(\boldsymbol{A}^i)^*\big) \al^i 
- \Big(\sum_{j\neq i} \big(\boldsymbol B^{ij}+(\boldsymbol{\bar{B}}^{ji})^*\big) \al^j  \Big)+ b^{ii} \rangle_{L^2}  \Big] 
\\
&=\mathbb E\Big[ \langle \beta^i,  \,-2\lam^i \al^i - \Big(\sum_{j=1}^N \big(\boldsymbol B^{ij}+(\boldsymbol{\bar{B}}^{ji})^*\big) \al^j\Big)  + b^{ii} \rangle_{L^2}    \Big]
\\
&= \int_0^T \mathbb E\left[ \beta^i_t \left(   \,-2\lam^i \al^i_t  - \Big(\sum_{j=1}^N \big((\boldsymbol B^{ij}+(\boldsymbol{\bar{B}}^{ji})^*) \al^j\big)(t)\Big) + b^{ii}_t  \right) \right] dt.
\end{aligned} 
\ee
By conditioning on $\mathcal F_t$ and using the  tower property we get from \eqref{eq:foc} and \eqref{eq:foc2} the following first order condition, 
\be \label{eq:foc3} 
2\lam^i \al^i_t  = b^{ii}_t -\sum_{j=1}^N \big((\boldsymbol B^{ij}+(\boldsymbol{\bar{B}}^{ji})^*) \E_t[\al^j]\big)(t),
\quad d\P \otimes dt \textrm{-a.e.~on } \Omega \times [0,T].
\ee
It follows that a strategy profile $\al\in \mathcal{A}^N$ is a Nash equilibrium if and only if the following holds for all $i \in \{1,\ldots,N\}$, 
\begin{equation} \label{eq:foc4}
    2 \lambda^i \al_t^i =  \,  b^{ii}_t - \sum_{j=1}^N \Big( \int_0^t   B^{ij}(t,s) \al^j_s ds +  \int_t^T   \bar B^{ji}(s,t)  \E_t[\al^j_s] ds\Big),  \ \ d\P \otimes dt \textrm{-a.e.~on } \Omega \times [0,T].
\end{equation}
Using the notation \eqref{eq:notation-alpha}, \eqref{eq:B-Bbar}, \eqref{eq:Lambda} and \eqref{eq:b}, the equations \eqref{eq:foc4} for all the players $i\in\{1,\ldots,N\}$ can be summarized as an $N$-dimensional coupled system of stochastic Fredholm equations as in \eqref{eq:fredholm-foc}.
\end{proof}

In order to prove Theorem~\ref{thm:finite-player-equilibrium} we will need the following technical lemma. 
\begin{lemma}\label{lemma:D_t}
Under Assumptions~\ref{assum:A-B-C} and \ref{assum:nonnegative-definite}, there exists a constant $c_0>0$ such that for every $t\in[0,T]$ it holds that 
\be
\langle f,2\Lambda f + (\boldsymbol{B}_t+\boldsymbol{\bar{B}}^*_t)f \rangle_{L^2,N}\geq   c_0 \langle f, f\rangle_{L^2,N}\quad\text{for any }f\in L^2([0,T],\R^N). 
\ee
\end{lemma}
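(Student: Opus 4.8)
The plan is to reduce the inequality, for each fixed $t\in[0,T]$, to Assumption~\ref{assum:nonnegative-definite} applied to the truncation of $f$ to the interval $(t,T]$, after recognizing the truncated operators $\boldsymbol{B}_t$ and $\boldsymbol{\bar{B}}_t$ as compressions of $\boldsymbol{B}$ and $\boldsymbol{\bar{B}}$. First I would introduce, for $t\in[0,T]$, the orthogonal projection $P_t$ on $L^2([0,T],\R^N)$ given by $(P_tf)(s):=\mathds{1}_{\{s>t\}}f(s)$, which satisfies $P_t^*=P_t$ and $P_t^2=P_t$. By Definition~\ref{def:truncated-kernel} one has $\boldsymbol{B}_tf=\boldsymbol{B}(P_tf)$ for every $f\in L^2([0,T],\R^N)$, and since $B$ is a Volterra kernel the function $\boldsymbol{B}(P_tf)$ vanishes on $[0,t]$; hence $\boldsymbol{B}_t=P_t\boldsymbol{B}P_t$, and in the same way $\boldsymbol{\bar{B}}_t=P_t\boldsymbol{\bar{B}}P_t$, so that $\boldsymbol{\bar{B}}_t^*=P_t\boldsymbol{\bar{B}}^*P_t$ by self-adjointness of $P_t$.

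Next I would split the quadratic form along $f=P_tf+(\boldsymbol{I}^N-P_t)f$. Since $\Lambda$ is a constant diagonal matrix, multiplication by $\Lambda$ commutes with multiplication by $\mathds{1}_{\{s>t\}}$, and since $P_tf$ and $(\boldsymbol{I}^N-P_t)f$ have disjoint supports the cross terms vanish, giving $\langle f,\Lambda f\rangle_{L^2,N}=\langle P_tf,\Lambda P_tf\rangle_{L^2,N}+\langle(\boldsymbol{I}^N-P_t)f,\Lambda(\boldsymbol{I}^N-P_t)f\rangle_{L^2,N}$. Combining this with the operator identities above yields
\[
\langle f,\,2\Lambda f+(\boldsymbol{B}_t+\boldsymbol{\bar{B}}_t^*)f\rangle_{L^2,N}=\langle P_tf,\,(\boldsymbol{B}+\boldsymbol{\bar{B}}^*+2\Lambda\boldsymbol{I}^N)(P_tf)\rangle_{L^2,N}+2\langle(\boldsymbol{I}^N-P_t)f,\,\Lambda(\boldsymbol{I}^N-P_t)f\rangle_{L^2,N}.
\]
To the first summand I would apply Assumption~\ref{assum:nonnegative-definite} with $g=P_tf$, which bounds it below by $c_0\|P_tf\|_{L^2,N}^2$; the second summand is at least $2\lambda_{\min}\|(\boldsymbol{I}^N-P_t)f\|_{L^2,N}^2$ with $\lambda_{\min}:=\min_{1\leq i\leq N}\lambda^i>0$. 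Adding the two bounds and using $\|P_tf\|_{L^2,N}^2+\|(\boldsymbol{I}^N-P_t)f\|_{L^2,N}^2=\|f\|_{L^2,N}^2$ gives the assertion with constant $\min(c_0,2\lambda_{\min})>0$, which is independent of $t$ (and which, slightly abusing notation, we continue to denote $c_0$).

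The only genuinely non-trivial ingredient is the operator identity $\boldsymbol{B}_t=P_t\boldsymbol{B}P_t$, together with its analogue for $\boldsymbol{\bar{B}}_t^*$ — that is, the fact that truncating a Volterra kernel in its second variable amounts to compressing the induced integral operator to the subspace of functions supported on $(t,T]$. Once this is in hand the remaining steps are elementary, and since no quantity degenerates as $t$ varies, the resulting constant is uniform over $[0,T]$.
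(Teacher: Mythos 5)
Your proof is correct and follows essentially the same route as the paper's: the key step in both is that, for Volterra kernels, $\langle f,(\boldsymbol{B}_t+\boldsymbol{\bar{B}}^*_t)f\rangle_{L^2,N}=\langle f_t,(\boldsymbol{B}+\boldsymbol{\bar{B}}^*)f_t\rangle_{L^2,N}$ with $f_t=P_tf$ (your compression identity), after which coercivity is applied to the truncated function and the remainder is absorbed by the strict positivity of $\Lambda$. The only difference is bookkeeping: you split $f$ orthogonally and end with the constant $\min(c_0,2\lambda_{\min})$, whereas the paper assumes without loss of generality $c_0<2\lambda^i$ and argues componentwise via $\|f^i_t\|_{L^2}\leq\|f^i\|_{L^2}$ to retain the constant $c_0$.
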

\begin{proof}
It follows from Assumption  \ref{assum:nonnegative-definite} that there exists a $  c_0>0$ such that
\be\label{eq:B-Bbar-inequality}
\langle f, (\boldsymbol{B}+\boldsymbol{\bar{B}}^*)f\rangle_{L^2,N}\geq 
\langle f,   c_0 f-2\Lambda f\rangle_{L^2,N}\quad \text{for any }f\in L^2([0,T],\R^N). 
\ee
Without loss of generality, assume that $  c_0<2\lam^i$ for all $i=1,\dots,N$. Fix $t\in[0,T]$ and $f\in L^2([0,T],\R^N)$. Define $f_t(s)\vcentcolon=\mathds{1}_{\{s> t\}}f(s)$. Then it follows that
\be
\begin{aligned}
\langle f,2\Lambda f + (\boldsymbol{B}_t+\boldsymbol{\bar{B}}^*_t)f \rangle_{L^2,N}
&=2\langle f,\Lambda f \rangle_{L^2,N}+\langle f_t,(\boldsymbol{B}+\boldsymbol{\bar{B}}^*)f_t \rangle_{L^2,N}\\
&\geq \sum_{i=1}^N\left(2\lam^i\langle f^i, f^i \rangle_{L^2}+(  c_0-2\lam^i)\langle f_t^i, f_t^i \rangle_{L^2}\right)\\
&\geq \sum_{i=1}^N\left(2\lam^i\langle f^i, f^i \rangle_{L^2}+(  c_0-2\lam^i)\langle f^i, f^i \rangle_{L^2}\right)\\
&=  c_0\langle f, f \rangle_{L^2,N}
\end{aligned}
\ee
where, the first equality holds since $B$ and $\bar B$ are Volterra kernels%(for details see the proof of Lemma 4.5 in \cite{abi2024optimal})
, the first inequality holds due to \eqref{eq:B-Bbar-inequality}, and the second inequality follows from $  c_0<2\lam^i$ for all $i=1,\dots,N$. This completes the proof.
\end{proof}

\begin{proof}[Proof of Theorem~\ref{thm:finite-player-equilibrium}] The key idea is to apply Proposition~4.1 of \citet{abi2024optimal} to the system of stochastic Fredholm equations in \eqref{eq:fredholm-foc} with 
\be\label{eq:coefficient-choices}
K=B,
\quad L=\bar B,
\quad f= b,
\quad \bar \Lambda = 2\Lambda.
\ee
Notice that under Assumption~\ref{assum:nonnegative-definite} the integral operators on $L^2([0,T],\R^N)$ induced by $B$ in and $\bar B$ are not necessarily nonnegative definite. However, recalling Definition~\ref{def:truncated-kernel}, for the proof of Proposition~4.1 of \citet{abi2024optimal} it is only required that there exists a $  c_0>0$ such that for every $t\in[0,T]$ the bounded linear operator given by
$\boldsymbol{D}_t\vcentcolon=\bar\Lambda\boldsymbol{I}^N+\boldsymbol{K}_t+\boldsymbol{L}^*_t$ 
satisfies
\be\label{eq:condition D_t}
\langle f,\boldsymbol{D}_t f\rangle_{L^2,N}\geq   c_0 \langle f, f\rangle_{L^2,N}\quad\text{for any }f\in L^2([0,T],\R^N). 
\ee
Now, recalling \eqref{eq:coefficient-choices}, the inequality \eqref{eq:condition D_t} follows from Lemma~\ref{lemma:D_t}. 
Thus, Lemma~\ref{lemma:concave-finite}, Proposition~\ref{prop:fredholm-foc} and Proposition~4.1 of \citet{abi2024optimal} yield the existence of the unique Nash equilibrium given in Theorem~\ref{thm:finite-player-equilibrium}.
\end{proof}

\section{Proofs of Section~\ref{sec:infinite-player-game}}\label{sec:proofs-infinite}
This section is dedicated to the proofs of Section~\ref{sec:infinite-player-game}, excluding the proof of Proposition~\ref{prop:fredholm-foc-infinite} which is deferred to the Appendix \ref{appendix}. In order to prove Theorem~\ref{thm:infinite-player-equilibrium}, we first note that Proposition~\ref{prop:fredholm-foc} simplifies in the setting of Section~\ref{subsec:definition-finite-player-game-graph}, hence the precise statement is given below in Proposition~\ref{prop:fredholm-foc-graph}. Also note that the nonnegative definiteness of the operator $\wt{\boldsymbol A}$ from Assumption~\ref{assum:A-B-C-graph} ensures the strict concavity of the objective functionals $J_0^{i,N}$ defined in \eqref{eq:J_0^iN}.
\begin{proposition}\label{prop:fredholm-foc-graph}
Let Assumption~\ref{assum:A-B-C-graph} be satisfied and assume that $\wt{\boldsymbol A}$ is nonnegative definite. Then, a set of strategies $\al^N=(\al^{1,N},\dots,\al^{N,N})\in\mathcal{A}^N$ is a Nash equilibrium if and only if it satisfies the $N$-dimensional coupled system of stochastic Fredholm equations of the second kind given by
\be \begin{aligned} \label{eq:fredholm-foc-graph}
2 \lambda \al_t^{i,N} &=  b^{i,N}_t  - \int_0^t   \wt{A}(t,s) \al^{i,N}_s ds - \hspace{-0.5mm} \int_t^T \wt{A}(s,t)  \E_t[\al^{i,N}_s] ds- \hspace{-0.5mm}\int_0^t   \wt{B}(t,s) \frac{1}{N}\sum_{j=1}^Nw_{ij}^N\al^{j,N}_s ds  \\&- \int_t^T \wt{B}(s,t)   \frac{1}{N}\sum_{j=1}^Nw_{ij}^N\E_t[\al^{j,N}_s] ds, \quad d\P \otimes dt \textrm{-a.e.~on } \Omega \times [0,T],\ i=1,\ldots, N.
\end{aligned}\ee 
\end{proposition}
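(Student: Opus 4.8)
The plan is to obtain Proposition~\ref{prop:fredholm-foc-graph} as the specialization of Proposition~\ref{prop:fredholm-foc} to the weighted-graph objective functionals $J_0^{i,N}$ of \eqref{eq:J_0^iN}, carried out in three steps: (i) identify the operators and processes so that $J_0^{i,N}$ is a particular instance of the general $J^{i,N}$ in \eqref{eq:J^iN}, checking that Assumption~\ref{assum:A-B-C-graph} implies Assumption~\ref{assum:A-B-C}; (ii) verify the strict-concavity hypothesis of Proposition~\ref{prop:fredholm-foc} from the weaker hypothesis that $\wt{\boldsymbol A}$ is merely nonnegative definite; (iii) substitute the identified coefficients into \eqref{eq:fredholm-foc} and simplify componentwise to recover \eqref{eq:fredholm-foc-graph}.

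For step (i), comparing \eqref{eq:J_0^iN} with \eqref{eq:J^iN} and using $w_{ii}^N=0$, we set $\boldsymbol B^{ii}=\wt{\boldsymbol A}$, $\lambda^i=\lambda$ (so $\boldsymbol A^i=\wt{\boldsymbol A}+\lambda\boldsymbol I$), $\boldsymbol B^{ij}=\boldsymbol{\bar B}^{ij}=\tfrac{w_{ij}^N}{N}\wt{\boldsymbol B}$ for $i\neq j$ (so that $\boldsymbol B^{ij}+(\boldsymbol{\bar B}^{ji})^*=\tfrac{w_{ij}^N}{N}(\wt{\boldsymbol B}+\wt{\boldsymbol B}^*)$ by symmetry $w_{ij}^N=w_{ji}^N$), $\boldsymbol C^{ijk}=\tfrac{w_{ij}^Nw_{ik}^N}{N^2}\wt{\boldsymbol C}$, together with $b^{ii}=b^{i,N}$, $b^{ij}=\tfrac{w_{ij}^N}{N}b^{*,N}$ for $j\neq i$, and $c^i=c^{i,N}$. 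Since $\wt A,\wt B,\wt C\in\mathcal G$ and the $w_{ij}^N/N$ are nonnegative constants, all the associated operators are admissible Volterra operators, and \eqref{eq:assumption-b-c} follows from the square-integrability and integrability conditions in Assumption~\ref{assum:A-B-C-graph}; thus Assumption~\ref{assum:A-B-C} holds. In matrix form, recalling \eqref{eq:B-Bbar}, \eqref{eq:Lambda}, \eqref{eq:b} and that $w^N$ has zero diagonal, this is precisely the correspondence \eqref{eq:correspondence-weighted}, namely $B=\bar B=\operatorname{Id}^N\cdot\wt A+\tfrac{1}{N}w^N\cdot\wt B$, $\Lambda=\lambda\operatorname{Id}^N$ and $b=(b^{1,N},\ldots,b^{N,N})$.

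For step (ii), the key observation is that, because $w_{ii}^N=0$, the local aggregate $\bar\al^{i,N}=\tfrac{1}{N}\sum_j w_{ij}^N\al^{j,N}$ in \eqref{eq:J_0^iN-rewritten} does not involve $\al^{i,N}$, so the only term of $J_0^{i,N}(\,\cdot\,;\al^{-i,N})$ that is quadratic in $\al^{i,N}$ is $-\langle\al^{i,N},(\wt{\boldsymbol A}+\lambda\boldsymbol I)\al^{i,N}\rangle_{L^2}$. Repeating the computation behind \eqref{eq:concave-finite}, for $\al^i\neq\beta^i$ on a set of positive $d\P\otimes dt$-measure and $\eps\in(0,1)$ the concavity defect equals $\eps(1-\eps)\,\E[\langle\al^i-\beta^i,(\wt{\boldsymbol A}+\lambda\boldsymbol I)(\al^i-\beta^i)\rangle_{L^2}]\geq\eps(1-\eps)\lambda\,\E[\|\al^i-\beta^i\|_{L^2}^2]>0$, where we used that $\wt{\boldsymbol A}$ is nonnegative definite (cf.\ Remark~\ref{rem:nonnegative-operators}) and $\lambda>0$. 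Hence each $\al^{i,N}\mapsto J_0^{i,N}(\al^{i,N};\al^{-i,N})$ is strictly concave and Proposition~\ref{prop:fredholm-foc} applies. For step (iii), writing out the $i$-th component of \eqref{eq:fredholm-foc} with the coefficients from step (i): the left side is $2\lambda\al_t^{i,N}$; the forward integrand $\sum_j B^{ij}(t,s)\al^{j,N}_s$ equals $\wt A(t,s)\al^{i,N}_s+\tfrac{1}{N}\sum_j w_{ij}^N\wt B(t,s)\al^{j,N}_s$ (the diagonal of $\tfrac1N w^N\wt B$ vanishes); and the backward integrand $\sum_j\bar B^{ji}(s,t)\E_t[\al^{j,N}_s]$ equals $\wt A(s,t)\E_t[\al^{i,N}_s]+\tfrac{1}{N}\sum_j w_{ij}^N\wt B(s,t)\E_t[\al^{j,N}_s]$, using $w_{ji}^N=w_{ij}^N$. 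Rearranging yields exactly \eqref{eq:fredholm-foc-graph}, and conversely any solution of \eqref{eq:fredholm-foc-graph} solves \eqref{eq:fredholm-foc}; together with Proposition~\ref{prop:fredholm-foc} this gives the claimed equivalence.

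Every step is essentially bookkeeping, so there is no genuine obstacle; the only point requiring care is step~(ii), where the strict-concavity hypothesis of Proposition~\ref{prop:fredholm-foc} must be re-derived under the strictly weaker assumption of the present proposition — and it is precisely the graph structure $w_{ii}^N=0$ (which removes $\al^{i,N}$ from its own local aggregate) that makes nonnegative definiteness of $\wt{\boldsymbol A}$ plus $\lambda>0$ suffice, rather than the full Assumption~\ref{assum:nonnegative-definite}.
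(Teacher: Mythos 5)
Your proposal is correct and follows essentially the same route as the paper, which obtains Proposition~\ref{prop:fredholm-foc-graph} as a direct specialization of Proposition~\ref{prop:fredholm-foc}, noting (as you verify in detail) that nonnegative definiteness of $\wt{\boldsymbol A}$ together with $\lambda>0$ and $w_{ii}^N=0$ yields the strict concavity hypothesis. Your coefficient identification and componentwise rewriting of \eqref{eq:fredholm-foc} match the paper's correspondence \eqref{eq:correspondence-weighted}, so there is nothing to add.
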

%We can now proceed with the proof of Theorem~\ref{thm:infinite-player-equilibrium}. 
% As already made use of in Section~\ref{sec:examples}, resolvent theory can be employed to explicitly solve general Volterra and Fredholm equations of the second kind in one variable. However, much less is known about Volterra and Fredholm equations in two or more variables. For this class of equations, existence and uniqueness of solutions is usually shown via contraction and fix point arguments, but there is no general theory on the derivation of explicit solutions. Therefore, in order to explicitly solve the stochastic Fredholm equation in two variables in \eqref{eq:fredholm-foc-infinite}, we proceed by temporarily eliminating one variable using the spectral decomposition of the graphon $W$ from Remark \ref{rem:spectral-decomposition}.

Now we are ready to prove Theorem~\ref{thm:infinite-player-equilibrium}. 
\begin{proof}[Proof of Theorem~\ref{thm:infinite-player-equilibrium}]
The key idea is to make use of the spectral decomposition \eqref{eq:representation-Wf} of $\boldsymbol{W}$. Namely, Remark \ref{rem:spectral-decomposition} yields the existence of an orthonormal basis $(\varphi_i)_{i\in\N}$ of $L^2([0,1],\R)$ consisting of eigenfunctions of the operator $\boldsymbol{W}$ together with a nonincreasing sequence of corresponding eigenvalues $(\vartheta_i)_{i\in\N}$ such that for any function $f\in L^2([0,1],\R)$ the representation \eqref{eq:representation-Wf} applies.
Plugging \eqref{eq:representation-Wf} into \eqref{eq:fredholm-foc-infinite} yields the infinite-dimensional system of stochastic Fredholm equations given by,
\be\label{eq:foc-infinite5}
\begin{aligned}
2\lam \al_t^u=b_t^u &- \int_0^t \wt{A}(t,s)\al^u_sds- \int_t^T \wt{A}(s,t)\E_t[\al^u_s]ds\\
&- \int_0^t\wt{B}(t,s)   \sum_{j=1}^\infty\vartheta_j\langle\varphi_j,\al_s\rangle_{L^2([0,1],\R)}\varphi_j(u)ds\\
&- \int_t^T\wt{B}(s,t)   \sum_{j=1}^\infty\vartheta_j\langle\varphi_j,\E_t[\al_s]\rangle_{L^2([0,1],\R)}\varphi_j(u)ds,
\end{aligned}\ee
with equality  understood in the almost sure sense as before. For every $i\in\N$, define the $\F$-progressively measurable processes $\wt{\al}^i$ and $\wt{b}^i$ by
\be\begin{aligned} \label{eq:alpha-tilde-b-tilde}
\wt{\al}^i_t\vcentcolon=&\langle\varphi_i,\al_t\rangle_{L^2([0,1],\R)}=\int_0^1\varphi_i(u)\al_t^udu,\quad &0\leq t\leq T,\\
\wt{b}^i_t\vcentcolon=&\langle\varphi_i,b_t\rangle_{L^2([0,1],\R)}=\int_0^1\varphi_i(u)b_t^udu,\quad &0\leq t\leq T.
\end{aligned}\ee
Then $\wt{b}^i\in L^2(\Omega\times [0,T],\R)$ for every $i\in \N$ by an application of the Cauchy–Schwarz inequality and due to the two facts that $b \in L^2(\Omega\times [0,T]\times [0,1],\R)$ by assumption and $\varphi_i\in L^2([0,1],\R)$ for every $i\in \N$. Now, applying for every $i\in\N$ to both sides of the equation \eqref{eq:foc-infinite5} the operator $\langle\varphi_i,\cdot\rangle_{L^2([0,1],\R)}$ and using Fubini's theorem and \eqref{eq:alpha-tilde-b-tilde} yields the following infinite-dimensional system of stochastic Fredholm equations in one variable,
\be\label{eq:foc-infinite-reduced}
\begin{aligned}
2\lam \wt{\al}^i_t=\wt{b}^i_t &- \int_0^t \wt{A}(t,s)\wt{\al}^i_sds- \int_t^T \wt{A}(s,t)\E_t[\wt{\al}^i_s]ds \\
&-\int_0^t\wt{B}(t,s)  \sum_{j=1}^\infty\vartheta_j\langle\varphi_i,\varphi_j\rangle_{L^2([0,1],\R)}\wt{\al}^j_sds\\
&- \int_t^T\wt{B}(s,t) \sum_{j=1}^\infty\vartheta_j\langle\varphi_i,\varphi_j\rangle_{L^2([0,1],\R)}\E_t[\wt{\al}^j_s]ds\\
=\wt{b}^i_t &- \int_0^t \big(\wt{A}(t,s)+\vartheta_i\wt{B}(t,s)\big)\wt{\al}^i_sds\\&- \int_t^T \big(\wt{A}(s,t)+\vartheta_i\wt{B}(s,t)\big)\E_t[\wt{\al}^i_s]ds, \qquad i\in\N,
\end{aligned}\ee
where the second equality of \eqref{eq:foc-infinite-reduced} follows from the fact that $(\varphi_i)_{i\in\N}$ is an orthonormal basis of $L^2([0,1],\R)$. Next, since the infinite-dimensional system of stochastic Fredholm equations in  \eqref{eq:foc-infinite-reduced} is fully decoupled, we can solve for each $i\in\N$ the corresponding equation for $\wt{\al}^i$ individually. For this, we define the Volterra kernels
\be\label{eq:C^i}
\wt{C}^i\vcentcolon=\frac{1}{2\lam}\big(\wt{A}+\vartheta_i\wt{B}\big)\in\mathcal{G},\quad\text{for every }i\in\N.
\ee
To solve \eqref{eq:foc-infinite-reduced}, we also need the following technical lemma which will be proved at the end of this section. For that, recall Definition~\ref{def:truncated-kernel} of truncated Volterra kernels.
\begin{lemma}\label{lemma:C^i-nonnegative}
Under Assumptions~\ref{assum:A-B-C-graphon} and \ref{assum:nonnegative-definite-graphon}, the kernels $\wt{C}^i$ defined in \eqref{eq:C^i} satisfy
$$
\langle f,f+\wt{\boldsymbol{C}}^if+(\wt{\boldsymbol{C}}^i)^*f\rangle_{L^2}\geq \frac{ c_W}{\lam}\langle f,f\rangle_{L^2},\quad \text{for every }f\in L^2([0,T],\R),\ i\in\N.
$$
If $c_W\leq\lam$, the corresponding truncated kernels $\wt{C}_t^i$ satisfy for every $t\in [0,T]$, 
$$
\langle f,f+\wt{\boldsymbol{C}}_t^if+(\wt{\boldsymbol{C}}_t^i)^*f\rangle_{L^2}\geq \frac{ c_W}{\lam}\langle f,f\rangle_{L^2},\quad \text{for every }f\in L^2([0,T],\R),\ i\in\N.
$$
\end{lemma}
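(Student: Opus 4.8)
The plan is to obtain the first inequality directly from the coercivity hypothesis of Assumption~\ref{assum:nonnegative-definite-graphon} by testing it against a rank-one function in the $(t,u)$-variables, and then to deduce the truncated version by the same localization argument used in the proof of Lemma~\ref{lemma:D_t}.

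For the first assertion I would fix $i\in\N$ and $f\in L^2([0,T],\R)$ and take the product test function $g(t,u):=f(t)\varphi_i(u)$, which lies in $L^2([0,T]\times[0,1],\R)$ with $\|g\|^2=\|f\|_{L^2}^2$ since $\varphi_i$ has unit norm (Remark~\ref{rem:spectral-decomposition}). Because $\wt{\boldsymbol A}$ and $\wt{\boldsymbol B}$ act on the time coordinate while $\boldsymbol W$ acts on the $[0,1]$ coordinate, and because $\boldsymbol W\varphi_i=\vartheta_i\varphi_i$, one has $(\wt{\boldsymbol A}g)(t,u)=(\wt{\boldsymbol A}f)(t)\varphi_i(u)$ and $(\wt{\boldsymbol B}(\boldsymbol Wg))(t,u)=\vartheta_i(\wt{\boldsymbol B}f)(t)\varphi_i(u)$. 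Substituting $g$ into the inequality of Assumption~\ref{assum:nonnegative-definite-graphon}, applying Fubini's theorem, and using $\int_0^1\varphi_i(u)^2\,du=1$ collapses the $[0,1]$-integral and leaves
\be
\langle f,\ \lam f+\wt{\boldsymbol A}f+\vartheta_i\wt{\boldsymbol B}f-c_Wf\rangle_{L^2}\ \geq\ 0.
\ee
Dividing by $\lam>0$, recalling $\wt{\boldsymbol C}^i=\tfrac{1}{2\lam}(\wt{\boldsymbol A}+\vartheta_i\wt{\boldsymbol B})$ from \eqref{eq:C^i}, and rewriting $2\langle f,\wt{\boldsymbol C}^if\rangle_{L^2}=\langle f,(\wt{\boldsymbol C}^i+(\wt{\boldsymbol C}^i)^*)f\rangle_{L^2}$ via \eqref{prod-id} then yields the first inequality.

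For the second assertion, assuming now $c_W\leq\lam$ and fixing $t\in[0,T]$, I would set $f_t:=\mathds{1}_{\{\cdot>t\}}f$ and use that $\wt C^i$ is a Volterra kernel to observe that $\wt{\boldsymbol C}^i_tf=\wt{\boldsymbol C}^if_t$ and that this function is supported on $(t,T]$; the same holds for the adjoint, whence $\langle f,\ \wt{\boldsymbol C}^i_tf+(\wt{\boldsymbol C}^i_t)^*f\rangle_{L^2}=\langle f_t,\ \wt{\boldsymbol C}^if_t+(\wt{\boldsymbol C}^i)^*f_t\rangle_{L^2}$. Applying the first inequality to $f_t$ bounds the right-hand side below by $(\tfrac{c_W}{\lam}-1)\|f_t\|_{L^2}^2$; then, using the orthogonal splitting $\|f\|_{L^2}^2=\|f_t\|_{L^2}^2+\|f-f_t\|_{L^2}^2$ and $c_W/\lam\leq1$ on the $[0,t]$-part, one concludes $\langle f,\ f+\wt{\boldsymbol C}^i_tf+(\wt{\boldsymbol C}^i_t)^*f\rangle_{L^2}\geq\tfrac{c_W}{\lam}\|f\|_{L^2}^2$, exactly as in the proof of Lemma~\ref{lemma:D_t}.

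Neither step is a serious obstacle: the one genuine idea is the choice of the rank-one test function $f(t)\varphi_i(u)$, which diagonalizes $\boldsymbol W$ and decouples the graphon coercivity condition into the family of one-variable conditions. The only point requiring care is the bookkeeping in the truncation step — checking that truncating the kernel $\wt C^i$ is the same as restricting its argument and that the truncated image is again supported on $(t,T]$ — both of which are immediate consequences of the Volterra property of $\wt C^i$.
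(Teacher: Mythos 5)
Your proposal is correct and follows essentially the same route as the paper: testing Assumption~\ref{assum:nonnegative-definite-graphon} with the rank-one function $g(t,u)=f(t)\varphi_i(u)$, using $\boldsymbol W\varphi_i=\vartheta_i\varphi_i$ and orthonormality to collapse to the one-variable coercivity bound, then invoking \eqref{prod-id} to pass to $\wt{\boldsymbol C}^i+(\wt{\boldsymbol C}^i)^*$. The truncation step is likewise the paper's argument (as in Lemma~\ref{lemma:D_t}): your identity $\langle f,\wt{\boldsymbol C}^i_tf+(\wt{\boldsymbol C}^i_t)^*f\rangle_{L^2}=\langle f_t,\wt{\boldsymbol C}^if_t+(\wt{\boldsymbol C}^i)^*f_t\rangle_{L^2}$ and the use of $c_W\leq\lam$ via the orthogonal split of $\|f\|_{L^2}^2$ are just a slight rephrasing of the paper's final inequality chain.
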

By the same argument as the one given in the proof of Theorem~\ref{thm:finite-player-equilibrium}, we can apply Proposition~5.1 of \citet{abijaber2023equilibrium} thanks to Lemma~\ref{lemma:C^i-nonnegative}, when choosing $c_W\leq\lam$ in Assumption~\ref{assum:nonnegative-definite-graphon}. The proposition implies that for every $i\in \N$ the stochastic Fredholm equation \eqref{eq:foc-infinite-reduced} admits a unique $\F$-progressively measurable solution $\hat{\al}^i$ in $L^2(\Omega\times [0,T],\R)$ given by
\be \label{eq:alpha_t^i-hat}
    \hat{\al}_t^i \vcentcolon =   \big((\boldsymbol{I} + \boldsymbol E^i)^{-1} \gamma^i \big) (t), \quad 0\leq t \leq T, 
\ee
where
\be 
\gamma_t^i \vcentcolon=   \wt{b}^i_t  -  \langle  \mathds{1}_{\{t< \cdot\}}  \wt{C}^i(\cdot,t),  ({\boldsymbol D}_t^i)^{-1}\mathds{1}_{\{t< \cdot\}} \E_t[\wt{b}^i_{\cdot}] \rangle_{L^2}
\ee
\be
E^i(t,s) \vcentcolon= - \mathds{1}_{\{t>s\}}  \big( \langle  \mathds{1}_{\{t< \cdot\}}   \wt{C}^i(\cdot,t), ({\boldsymbol D}_t^i)^{-1}   \mathds{1}_{\{t< \cdot\}}  \wt{C}^i(\cdot,s)  \rangle_{L^2}    -   \wt{C}^i(t,s)   \big),
\ee
\be \label{d-opt} 
 {\boldsymbol{D}}^i_t \vcentcolon= \boldsymbol{I}  + \wt{\boldsymbol{C}}^i_t + (\wt{\boldsymbol{C}}^i_t)^*,
\ee
and $\boldsymbol E^i$ is the integral operator induced by the kernel $E^i$. Finally, define a strategy profile $\hat{\al}$, our candidate for the graphon game Nash equilibrium, by 
\be\label{eq:alpha-hat}
\hat{\al}:\Omega\times [0,T]\times [0,1]\to\R,\quad \hat{\al}_t^u\vcentcolon=\sum_{i=1}^\infty \varphi_i(u)\hat{\al}^i_t.
\ee
It will be shown in Lemma~\ref{lemma:alpha-hat-admissible} that $\hat\al$ is admissible, i.e. that $\hat{\al}\in\mathcal{A}^\infty$. In order to verify that $\hat{\al}$ is indeed a Nash equilibrium, we plug \eqref{eq:alpha-hat} into the rewritten stochastic Fredholm equation in \eqref{eq:foc-infinite5}, which yields
\be\label{eq:foc-plugged-in}
\begin{aligned}
2\lam \sum_{i=1}^\infty \varphi_i(u)\hat{\al}^i_t=b_t^u &- \int_0^t \wt{A}(t,s)\sum_{i=1}^\infty \varphi_i(u)\hat{\al}^i_sds- \int_t^T \wt{A}(s,t)\E_t\big[\sum_{i=1}^\infty \varphi_i(u)\hat{\al}^i_s\big]ds\\
&- \int_0^t\wt{B}(t,s)  \sum_{j=1}^\infty\vartheta_j\varphi_j(u)\int_0^1\varphi_j(v)\sum_{i=1}^\infty \varphi_i(v)\hat{\al}^i_sdvds\\
&- \int_t^T\wt{B}(s,t)  \sum_{j=1}^\infty\vartheta_j\varphi_j(u)\int_0^1 \varphi_j(v)\E_t\big[\sum_{i=1}^\infty \varphi_i(v)\hat{\al}^i_s\big]dvds.
\end{aligned}\ee
By making use of the fact that $(\varphi_i)_{i\in\N}$ is an orthonormal basis of $L^2([0,1],\R)$ and recalling the definition of the processes $\smash{\wt{b}^i}$ defined in \eqref{eq:alpha-tilde-b-tilde}, we get that equation \eqref{eq:foc-plugged-in} is equivalent to
\be\label{eq:foc-plugged-in-rewritten}
\begin{aligned}
\sum_{i=1}^\infty \varphi_i(u)2\lam\hat{\al}^i_t
&=\sum_{i=1}^\infty \varphi_i(u)\wt{b}^i_t 
- \sum_{i=1}^\infty \varphi_i(u)\hspace{-0.5mm}\int_0^t \wt{A}(t,s)\hat{\al}^i_sds
- \sum_{i=1}^\infty \varphi_i(u)\hspace{-0.5mm}\int_t^T \wt{A}(s,t)\E_t[\hat{\al}^i_s]ds\\
&- \sum_{i=1}^\infty\varphi_i(u)\int_0^t\wt{B}(t,s)   \vartheta_i\hat{\al}^i_sds
- \sum_{i=1}^\infty\varphi_i(u)\int_t^T\wt{B}(s,t)  \vartheta_i\E_t[\hat{\al}^i_s]ds,
\end{aligned}\ee
where we renamed the summation index in the last two terms of \eqref{eq:foc-plugged-in-rewritten}. Now equation \eqref{eq:foc-plugged-in-rewritten} is satisfied, as it coincides with the weighted infinite sum of the stochastic Fredholm equations in \eqref{eq:foc-infinite-reduced} summed over all $i\in\N$ with weights $\varphi_i(u)$. It follows that the strategy profile $\hat{\al}$ defined in \eqref{eq:alpha-hat} solves the rewritten stochastic Fredholm equation in \eqref{eq:foc-infinite5} and therefore also the stochastic Fredholm equation in \eqref{eq:fredholm-foc-infinite} itself.
Thus, by Proposition~\ref{prop:fredholm-foc-infinite} and Lemma~\ref{lemma:alpha-hat-admissible}, $\hat{\al}$ is a graphon game Nash equilibrium in the sense of Definition~\ref{def:Nash-infinite-player}.

For the uniqueness of $\hat{\al}$, notice that any graphon game Nash equilibrium has to satisfy the stochastic Fredholm equation \eqref{eq:fredholm-foc-infinite} and thus \eqref{eq:foc-infinite5} as well. It follows that its Fourier coefficients defined in \eqref{eq:alpha-tilde-b-tilde} have to satisfy \eqref{eq:foc-infinite-reduced}. Moreover, they have to be $\F$-progressively measurable and in $L^2(\Omega\times [0,T],\R)$, because the graphon game Nash equilibrium has to be $\F$-progressively measurable and in $L^2(\Omega\times [0,T]\times [0,1],\R)$.
Now by Proposition~4.1 in \citet{abijaber2023equilibrium}, equation \eqref{eq:foc-infinite-reduced} has for every $i\in\N$ a unique $\F$-progressively measurable solution $\hat{\al}^i$ in $L^2(\Omega\times [0,T],\R)$ given by \eqref{eq:alpha_t^i-hat}. Therefore, the Fourier coefficients are unique and thus they uniquely determine the graphon game Nash equilibrium via equation \eqref{eq:alpha-hat}, since $(\varphi_i)_{i\in\N}$ is an orthonormal basis of $L^2([0,1],\R)$.
\end{proof}

\begin{lemma} \label{lemma:alpha-hat-admissible}
Under Assumptions~\ref{assum:A-B-C-graphon} and \ref{assum:nonnegative-definite-graphon}, the strategy profile $\hat{\al}$ defined in \eqref{eq:alpha-hat} is admissible, i.e. $\hat{\al}\in\mathcal{A}^\infty$.
\end{lemma}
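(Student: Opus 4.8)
The plan is to verify the three defining conditions of $\mathcal{A}^\infty$ for the profile $\hat\al=(\hat\al^u)_{u\in[0,1]}$ from \eqref{eq:alpha-hat}: joint $\mathcal{I}\otimes\mathcal{B}([0,T])\otimes\mathcal{F}$-measurability together with $\F$-progressive measurability for $\mu$-a.e.\ $u$; the global bound $\int_0^1\int_0^T\E[(\hat\al^u_t)^2]\,dt\,du<\infty$; and the pointwise bound $\int_0^T\E[(\hat\al^u_t)^2]\,dt<\infty$ for $\mu$-a.e.\ $u$. Everything reduces to the single summability estimate
\[
\sum_{i=1}^\infty\E\big[\|\hat\al^i\|_{L^2([0,T],\R)}^2\big]<\infty,
\]
where, for each $i\in\N$, $\hat\al^i$ is the $\F$-progressively measurable $L^2(\Omega\times[0,T],\R)$-solution of the decoupled scalar stochastic Fredholm equation \eqref{eq:foc-infinite-reduced} produced in the proof of Theorem~\ref{thm:infinite-player-equilibrium}.

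First I would prove this summability estimate via an a priori energy bound that is \emph{uniform} in $i$. Writing $\boldsymbol K^i$ for the bounded operator on $L^2([0,T],\R)$ induced by the Volterra kernel $\wt A+\vartheta_i\wt B=2\lam\wt C^i$, I would test \eqref{eq:foc-infinite-reduced} against $\hat\al^i_t$, take expectations and integrate in $t\in[0,T]$; the tower property (replacing $\E[\hat\al^i_t\E_t[\hat\al^i_s]]$ by $\E[\hat\al^i_t\hat\al^i_s]$), the Volterra property of $\wt A+\vartheta_i\wt B$, and Fubini collapse the two kernel terms into $2\,\E\langle\hat\al^i,\boldsymbol K^i\hat\al^i\rangle_{L^2}$, giving the identity $2\lam\,\E\|\hat\al^i\|_{L^2}^2+2\,\E\langle\hat\al^i,\boldsymbol K^i\hat\al^i\rangle_{L^2}=\E\langle\hat\al^i,\wt b^i\rangle_{L^2}$. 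By \eqref{prod-id} one has $\langle f,(\wt{\boldsymbol C}^i+(\wt{\boldsymbol C}^i)^*)f\rangle_{L^2}=2\langle f,\wt{\boldsymbol C}^i f\rangle_{L^2}=\lam^{-1}\langle f,\boldsymbol K^i f\rangle_{L^2}$, so Lemma~\ref{lemma:C^i-nonnegative} yields $\lam\|f\|_{L^2}^2+\langle f,\boldsymbol K^i f\rangle_{L^2}\ge c_W\|f\|_{L^2}^2$, whence the left-hand side of the energy identity is $\ge 2c_W\,\E\|\hat\al^i\|_{L^2}^2$; Cauchy--Schwarz on the right-hand side then gives $\E\|\hat\al^i\|_{L^2}^2\le(4c_W^2)^{-1}\,\E\|\wt b^i\|_{L^2}^2$, with $(4c_W^2)^{-1}$ independent of $i$. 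Summing over $i$, using Tonelli and Parseval's identity in $L^2([0,1],\R)$ — legitimate since $b_t\in L^2([0,1],\R)$ for $\P\otimes dt$-a.e.\ $(\omega,t)$ by Assumption~\ref{assum:A-B-C-graphon} — gives $\sum_i\E\|\hat\al^i\|_{L^2}^2\le(4c_W^2)^{-1}\,\E\int_0^T\int_0^1(b^u_t)^2\,du\,dt<\infty$, because $b\in L^2(\Omega\times[0,T]\times[0,1],\R)$.

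Granting the summability estimate, I would conclude as follows. The partial sums $S_n(u,t,\omega):=\sum_{i=1}^n\varphi_i(u)\hat\al^i_t(\omega)$ satisfy $\|S_n-S_m\|_{L^2(\Omega\times[0,T]\times[0,1],\R)}^2=\sum_{i=m+1}^n\E\|\hat\al^i\|_{L^2}^2$ by orthonormality of $(\varphi_i)$ in $L^2([0,1],\R)$, so $(S_n)$ is Cauchy and converges in $L^2(\Omega\times[0,T]\times[0,1],\R)$; its limit coincides $\mu\otimes dt\otimes\P$-a.e.\ with $\hat\al$ from \eqref{eq:alpha-hat} (a subsequence of the series converges pointwise a.e.). This gives the global bound directly, and Fubini yields $\int_0^T\E[(\hat\al^u_t)^2]\,dt<\infty$ for $\mu$-a.e.\ $u$. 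For measurability, each $S_n$ is jointly $\mathcal{I}\otimes\mathcal{B}([0,T])\otimes\mathcal{F}$-measurable and, for fixed $u$, $\F$-progressively measurable, since every $\hat\al^i$ is; passing to an a.e.-convergent subsequence gives joint measurability of $\hat\al$, and extracting from $\int_0^1\E\|S_n(u,\cdot)-\hat\al^u\|_{L^2([0,T],\R)}^2\,du\to0$ a subsequence converging for $\mu$-a.e.\ $u$ exhibits $\hat\al^u$, for $\mu$-a.e.\ $u$, as an element of the closed subspace of $\F$-progressively measurable processes in $L^2(\Omega\times[0,T],\R)$. Hence $\hat\al\in\mathcal{A}^\infty$.

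I expect the main obstacle to be the uniform-in-$i$ energy estimate: carrying out the tower-property and Fubini manipulations on \eqref{eq:foc-infinite-reduced} correctly and matching the coercivity constant of Lemma~\ref{lemma:C^i-nonnegative} so that the bound on $\E\|\hat\al^i\|_{L^2}^2$ is genuinely independent of $i$ — here the bound $|\vartheta_i|\le\|\boldsymbol W\|_{\operatorname{op}}\le1$ together with the integrability condition \eqref{eq:sup-A+B-condition} ensures that the operators $\boldsymbol K^i$ are controlled uniformly. The convergence and measurability steps afterwards are routine functional-analytic bookkeeping.
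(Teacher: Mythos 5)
Your proof is correct, but it takes a genuinely different route from the paper's. The paper works with the explicit Volterra representation $\hat\al^i_t=\gamma^i_t-\int_0^t E^i(t,s)\hat\al^i_s\,ds$ coming from the resolvent construction, proves uniform-in-$i$ bounds on $\gamma^i$ and on $\sup_{t\le T}\int_0^T|E^i(t,s)|^2ds$ (Lemma~\ref{lemma:gamma_i-E^i-bound}, which in turn needs the inverse-operator bound of Lemma~\ref{lemma:inverse operator}, the uniform eigenvalue bound, and the integrability condition \eqref{eq:sup-A+B-condition}), and then applies Gr\"onwall to get $\int_0^T\E[(\hat\al^i_t)^2]dt\le C_\al\int_0^T\E[(\wt b^i_t)^2]dt$; the conclusion via Tonelli/Parseval and $b\in L^2(\Omega\times[0,T]\times[0,1],\R)$ is then the same as yours. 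You instead bypass the resolvent bounds entirely: testing \eqref{eq:foc-infinite-reduced} against $\hat\al^i$, using the tower property, the Volterra structure and \eqref{prod-id}, and invoking the coercivity of Lemma~\ref{lemma:C^i-nonnegative} gives the a priori estimate $\E\|\hat\al^i\|_{L^2}^2\le(4c_W^2)^{-1}\E\|\wt b^i\|_{L^2}^2$ directly, with a constant manifestly independent of $i$ (the manipulations are justified because $\hat\al^i\in L^2(\Omega\times[0,T],\R)$ and $\wt A+\vartheta_i\wt B\in L^2([0,T]^2,\R)$, so all integrals converge). This is essentially the energy argument the paper itself deploys later for Lemma~\ref{lemma:uniform-bound} and Proposition~\ref{prop:graphon-game-continuity}, and for this lemma it is arguably cleaner: you avoid Lemma~\ref{lemma:gamma_i-E^i-bound} and, in particular, do not actually need \eqref{eq:sup-A+B-condition} — your closing remark that this condition is needed to control the $\boldsymbol K^i$ uniformly overstates its role, since square-integrability of the kernels plus the uniform eigenvalue bound already justifies the finiteness of the tested terms, and the uniform constant comes solely from the coercivity. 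You also spell out the measurability and $\mu$-a.e.\ square-integrability bookkeeping (via $L^2$-Cauchy partial sums and a.e.\ subsequences), which the paper leaves implicit; that part is routine and correct.
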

Since the proof of Lemma~\ref{lemma:alpha-hat-admissible} mainly involves standard techniques based on Grönwall’s lemma, it is deferred to Appendix \ref{appendix}.

\begin{proof}[Proof of Lemma~\ref{lemma:C^i-nonnegative}]
Fix $i\in\N$ and $f\in L^2([0,T],\R)$. Define the product function $g\in L^2([0,T]\times[0,1],\R)$ by
\be \label{eq:definition-g}
g(t,u):= f(t)\varphi_i(u),\quad (t,u)\in [0,T]\times [0,1].
\ee
By Assumption~\ref{assum:nonnegative-definite-graphon}, it holds that 
\be\begin{aligned}\label{eq:Ci-nonnegative}
0&\leq \int_0^T\int_0^1 (\lam- c_W)g(t,u)^2 + g(t,u) \big((\wt{\boldsymbol A}g(u))(t)+(\wt{\boldsymbol B}(\boldsymbol {W}g)(u))(t)\big)dudt\\
&=\int_0^T\int_0^1 (\lam- c_W)f(t)^2\varphi_i(u)^2+ f(t)\varphi_i(u) \big(\varphi_i(u)(\wt{\boldsymbol A}f)(t)+\vartheta_i\varphi_i(u)(\wt{\boldsymbol B}f)(t)\big)dudt\\
&=\int_0^T f(t) \big((\lam- c_W)f(t)+(\wt{\boldsymbol A}f)(t)+\vartheta_i(\wt{\boldsymbol B}f)(t)\big)dt,
\end{aligned}\ee
where we used \eqref{eq:definition-g} for the first equality and the fact that $(\varphi_i)_{i\in\N}$ is an orthonormal basis of $L^2([0,1],\R)$ for the second equality. 
Dividing \eqref{eq:Ci-nonnegative} by $2\lam$ yields that 
$$
\langle f,\tfrac{1}{2}f+\wt{\boldsymbol{C}}^if\rangle_{L^2}\geq \frac{ c_W}{2\lam}\langle f,f\rangle_{L^2},
$$
and therefore by Remark \ref{rem:nonnegative-operators} that
\be \label{eq:D^i-coercive}
\langle f,f+\wt{\boldsymbol{C}}^if+(\wt{\boldsymbol{C}}^i)^*f\rangle_{L^2}\geq \frac{ c_W}{\lam}\langle f,f\rangle_{L^2}.
\ee
For the second part of the lemma, fix $t\in [0,T]$ and let $f_t(s)\vcentcolon=\mathds{1}_{\{s> t\}}f(s)$. Then 
\be\begin{aligned}\label{eq:D_t^i-coercive}
\langle f,\wt{\boldsymbol{C}}_t^if+(\wt{\boldsymbol{C}}_t^i)^*f\rangle_{L^2}&=
\langle f_t,\wt{\boldsymbol{C}}^if_t+(\wt{\boldsymbol{C}}^i)^*f_t\rangle_{L^2}\\
&\geq \big(\frac{c_w}{\lam}-1\big)\langle f_t, f_t\rangle_{L^2}\\
&\geq \big(\frac{c_w}{\lam}-1\big)\langle f, f\rangle_{L^2},
\end{aligned}\ee
where the equality in \eqref{eq:D_t^i-coercive} follows from Definition~\ref{def:truncated-kernel} and by definition of $f_t$, the first inequality follows from \eqref{eq:D^i-coercive} and the second inequality follows from $c_w\leq\lam$. Rearranging \eqref{eq:D_t^i-coercive} completes the proof.
\end{proof}
\section{Proofs of Section~\ref{sec:convergence}}\label{sec:proofs-convergence}
We first prove the results of Section~\ref{subsec:preliminaries}, and start by we showing that any finite-player game on a weighted graph can be equivalently reformulated as a graphon game.
\begin{proof}[Proof of Proposition~\ref{prop:correspondence-finite-infinite-game}]
Suppose that $\hat\al\in\mathcal{A}^\infty$ is a Nash equilibrium of the graphon game with underlying step graphon $W^N$ and family of noise processes $b^{N}_{\operatorname{step}}$. Then, by Definition~\ref{def:Nash-infinite-player}, it maximizes the objective functional
 \be
    \begin{aligned}\label{eq:J^uWn}
    J^{u,W^N}(\al^{u}; (\al^{v})_{v\neq u}) =\E\Big[ &- \langle \al^u, (\wt{\boldsymbol A}+\lambda \boldsymbol{I}) \al^u \rangle_{L^2} -\langle \al^u,\big(\wt{\boldsymbol B}+\wt{\boldsymbol B}^*\big) (\boldsymbol{W}^N\al)(u) \rangle_{L^2}   \\ 
    &- \langle (\boldsymbol{W}^N\al)(u),\smash{\wt{\boldsymbol{C}}}(\boldsymbol{W}^N\al)(u) \rangle_{L^2}
    + \langle b^{u,N}_{\operatorname{step}}, \al^u \rangle_{L^2} \\&+\langle b^*, (\boldsymbol{W}^N\al)(u)\rangle_{L^2} + c^u \Big]
\end{aligned}\ee
for $\mu$-a.e. $u\in I$.
Now, since $W^N$ and $b^{N}_{\operatorname{step}}$ are step functions with respect to the partition $\mathcal{P}^N$, the second, third, fourth and fifth term on the right-hand side of \eqref{eq:J^uWn} are also step functions with respect to $\mathcal{P}^N$. It follows that the maximizer $\hat\al$ of the functionals \eqref{eq:J^uWn} is a step function with respect to $\mathcal{P}^N$ as well, since the random variable $c^u$ does not affect the maximization and the individual maximizers $\hat\al^u$ of \eqref{eq:J^uWn} for $u\in I$  are unique due to its concavity.
Define $\hat\al^{i,N}$ to be the value of $\hat\al$ on $\mathcal{P}_i^N$ for all $i=1,\ldots, N$. Then the stochastic Fredholm equation \eqref{eq:fredholm-foc-infinite}  satisfied by $\hat{\al}$ from Proposition~\ref{prop:fredholm-foc-infinite} is equivalent to the following,
\be\begin{aligned} \label{eq:fredholm-step-graphon-finite} 
2\lam \hat\al_t^{i,N}&=b_t^{i,N} - \hspace{-0.5mm}\int_0^t \wt{A}(t,s)\hat\al_s^{i,N}ds-\hspace{-0.5mm} \int_t^T \wt{A}(s,t)\E_t[\hat\al_s^{i,N}]ds- \hspace{-0.5mm}\int_0^t\wt{B}(t,s)  \frac{1}{N}\sum_{j=1}^Nw_{ij}^N\hat\al_s^{j,N}ds\\
&- \int_t^T\wt{B}(s,t)  \frac{1}{N}\sum_{j=1}^Nw_{ij}^N\E_t[\hat\al_s^{j,N}]ds,\ \ d\P \otimes dt \textrm{-a.e.~on } \Omega \times [0,T],\ i\in\{1,\ldots,N\},
\end{aligned}\ee
where $b^{i,N}$ is the value of $b^N_{\operatorname{step}}$ on $\mathcal{P}_i^N$ for all $i=1,\ldots, N$. Now by Proposition~\ref{prop:fredholm-foc-graph}, $\hat\al^N=(\hat\al^{1,N},\ldots, \hat\al^{N,N})$ is a Nash equilibrium of the finite-player game, as it solves equation \eqref{eq:fredholm-step-graphon-finite}. 
Using Propositions~\ref{prop:fredholm-foc-graph} and \ref{prop:fredholm-foc-infinite} again, the reverse direction follows immediately. 
\end{proof}

Next, we derive a continuity result for the graphon game, which states that a  small change of the underlying graphon $W$ and the noise processes only results in a small change of its corresponding Nash equilibrium.

\begin{proposition}\label{prop:graphon-game-continuity}
Consider a graphon game with underlying graphon $W$ and family of noise processes $b$,  such that Assumptions~\ref{assum:A-B-C-graphon} and \ref{assum:nonnegative-definite-graphon} are satisfied. Let $\hat\al\in\mathcal{A}^\infty$ denote the corresponding unique Nash equilibrium (which exists by Theorem~\ref{thm:infinite-player-equilibrium}). Consider another graphon game with underlying graphon $W'$ and family of noise processes $b'$ and assume that it admits a Nash equilibrium $\hat\beta\in\mathcal{A}^\infty$. Then
\be\begin{aligned} \label{eq:continuity-bound} 
&\E\Big[\int_0^1\int_0^T(\hat\al_t^u-\hat\beta_t^u)^2dtdu\Big] \\&\leq\frac{1}{2 c_W}\|\wt{\boldsymbol{B}}\|_{\operatorname{op}}\|\boldsymbol{W}-\boldsymbol{W}'\|_{\operatorname{op}}\Big(\E\Big[\int_0^1\int_0^T(\hat\al_t^u)^2dtdu\Big]+3\E\Big[\int_0^1\int_0^T(\hat\beta_t^u)^2dtdu\Big]\Big)
\\&+\frac{1}{2 c_W} \E\Big[\int_0^1\hspace{-1mm}\int_0^T(b_t^u-b^{'u}_t)^2dtdu\Big]^\frac{1}{2}\Big(\E\Big[\int_0^1\hspace{-1mm}\int_0^T(\hat\al_t^u)^2dtdu\Big]^\frac{1}{2}+\E\Big[\int_0^1\hspace{-1mm}\int_0^T(\hat\beta_t^u)^2dtdu\Big]^\frac{1}{2}\Big),
\end{aligned}\ee
where $c_W>0$ is the constant from Assumption~\ref{assum:nonnegative-definite-graphon}.
\end{proposition}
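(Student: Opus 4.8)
The plan is to subtract the first–order Fredholm conditions of the two games, test the equation satisfied by the difference $\delta_t^u\vcentcolon=\hat\al_t^u-\hat\beta_t^u$ against $\delta$ itself, and invoke the coercivity in Assumption~\ref{assum:nonnegative-definite-graphon}. Throughout let $\|\cdot\|$ denote the norm of $L^2(\Omega\times[0,T]\times[0,1],\R)$; note $\hat\al,\hat\beta\in\mathcal{A}^\infty$ implies $\hat\al(\omega),\hat\beta(\omega)\in L^2([0,T]\times[0,1],\R)$ for a.e.\ $\omega$, and $b,b'$ are square–integrable (Assumption~\ref{assum:A-B-C-graphon} for both games).

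\textbf{Step 1 (Fredholm representations and subtraction).} Since $\wt{\boldsymbol A}$ is nonnegative definite, Proposition~\ref{prop:fredholm-foc-infinite} shows that $\hat\al$ solves \eqref{eq:fredholm-foc-infinite} with data $(W,b)$ and $\hat\beta$ solves it with $(W',b')$. Writing $\Delta b_t^u\vcentcolon=b_t^u-b^{'u}_t$ and using the identity $\int_0^1W(u,v)\hat\al_s^vdv-\int_0^1W'(u,v)\hat\beta_s^vdv=(\boldsymbol{W}\delta)_s(u)+((\boldsymbol{W}-\boldsymbol{W}')\hat\beta)_s(u)$, subtraction yields a single Fredholm equation
\[
2\lam\,\delta_t^u=\Delta b_t^u-(\wt{\boldsymbol A}\delta^u)_t-\int_t^T\wt{A}(s,t)\E_t[\delta_s^u]\,ds-(\wt{\boldsymbol B}(\boldsymbol{W}\delta))_t(u)-\int_t^T\wt{B}(s,t)\E_t[(\boldsymbol{W}\delta)_s(u)]\,ds-R_t^u,
\]
where $R_t^u=\int_0^t\wt{B}(t,s)((\boldsymbol{W}-\boldsymbol{W}')\hat\beta)_s(u)\,ds+\int_t^T\wt{B}(s,t)\E_t[((\boldsymbol{W}-\boldsymbol{W}')\hat\beta)_s(u)]\,ds$ collects the terms driven by $\boldsymbol{W}-\boldsymbol{W}'$.

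\textbf{Step 2 (testing against $\delta$).} Multiply by $\delta_t^u$, integrate over $(t,u)\in[0,T]\times[0,1]$ and take expectations; all terms are finite by Step~1. For each backward term I use that $\delta_t^u$ is $\mathcal{F}_t$–measurable together with the tower property to replace $\delta_t^u\,\E_t[X_s^u]$ by $\E[\delta_t^u X_s^u]$, and then Fubini to interchange $s$ and $t$; this turns every backward contribution into the same pathwise bilinear form as its forward counterpart. Using in addition that $\boldsymbol{W}$ is self–adjoint and commutes with the time operators $\wt{\boldsymbol A},\wt{\boldsymbol B}$ (cf.\ Remark~\ref{rem:nonnegative-operators}), one obtains
\[
2\,\E\Big[\lam\!\int_0^1\!\!\int_0^T(\delta_t^u)^2\,dt\,du+\langle\delta,\wt{\boldsymbol A}\delta\rangle+\langle\delta,\wt{\boldsymbol B}(\boldsymbol{W}\delta)\rangle\Big]=\E\langle\delta,\Delta b\rangle-\E\langle\delta,R\rangle,
\]
where the inner products are those of $L^2([0,T]\times[0,1],\R)$ taken pathwise (with $\wt{\boldsymbol A},\wt{\boldsymbol B}$ acting in time and $\boldsymbol{W}$ in the label).

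\textbf{Step 3 (coercivity and conclusion).} Applying Assumption~\ref{assum:nonnegative-definite-graphon} pathwise with $g=\delta(\omega)$ and taking expectations bounds the left–hand side below by $2c_W\|\delta\|^2$. On the right, $|\E\langle\delta,\Delta b\rangle|\le\|\delta\|\,\|\Delta b\|$ by Cauchy--Schwarz on $\Omega\times[0,T]\times[0,1]$; and, handling $R$ with the same adapted trick (its backward part becomes $\langle\wt{\boldsymbol B}\delta,(\boldsymbol{W}-\boldsymbol{W}')\hat\beta\rangle$), each of the two pieces of $\E\langle\delta,R\rangle$ is at most $\|\wt{\boldsymbol B}\|_{\operatorname{op}}\|\boldsymbol{W}-\boldsymbol{W}'\|_{\operatorname{op}}\|\delta\|\,\|\hat\beta\|$, so $|\E\langle\delta,R\rangle|\le 2\|\wt{\boldsymbol B}\|_{\operatorname{op}}\|\boldsymbol{W}-\boldsymbol{W}'\|_{\operatorname{op}}\|\delta\|\,\|\hat\beta\|$. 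Hence
\[
2c_W\|\delta\|^2\le\|\delta\|\Big(\|\Delta b\|+2\|\wt{\boldsymbol B}\|_{\operatorname{op}}\|\boldsymbol{W}-\boldsymbol{W}'\|_{\operatorname{op}}\|\hat\beta\|\Big).
\]
Bounding the remaining factor $\|\delta\|$ on the right by $\|\hat\al\|+\|\hat\beta\|$, using $\|\hat\beta\|(\|\hat\al\|+\|\hat\beta\|)\le\tfrac12\|\hat\al\|^2+\tfrac32\|\hat\beta\|^2$, and dividing by $2c_W$ gives exactly \eqref{eq:continuity-bound}.

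\textbf{Main obstacle.} The delicate part is the bookkeeping in Step~2: justifying the Fubini interchanges and conditional–expectation manipulations, and verifying that the forward and backward terms reassemble precisely into the coercive bilinear form $g\mapsto\langle g,\lam g+\wt{\boldsymbol A}g+\wt{\boldsymbol B}(\boldsymbol{W}g)\rangle$ of Assumption~\ref{assum:nonnegative-definite-graphon} — this is where the self–adjointness of $\boldsymbol{W}$, its commutation with the time operators, and Remark~\ref{rem:nonnegative-operators} are essential. Everything afterwards is Cauchy--Schwarz and Young's inequality.
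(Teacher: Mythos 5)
Your proposal is correct and follows essentially the same route as the paper's proof: subtract the two Fredholm first-order conditions, decompose $\boldsymbol{W}\hat\al-\boldsymbol{W}'\hat\beta=\boldsymbol{W}(\hat\al-\hat\beta)+(\boldsymbol{W}-\boldsymbol{W}')\hat\beta$, test against the difference, use the tower property and Fubini to symmetrize, invoke the coercivity of Assumption~\ref{assum:nonnegative-definite-graphon} (your reassembly step is exactly the content of Lemma~\ref{lemma:(A+A*)+(B+B*)W-nonnegative}), and finish with Cauchy--Schwarz, operator-norm bounds and Young's inequality, arriving at the identical constants. The only cosmetic remark is that the "commutation" of $\boldsymbol{W}$ with $\wt{\boldsymbol A},\wt{\boldsymbol B}$ is simply that they act on different variables and is not what Remark~\ref{rem:nonnegative-operators} states, but this does not affect the argument.
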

In order to prove Proposition~\ref{prop:graphon-game-continuity}, the following lemma is required. 
%\red{below what the notation $g^u$ stands for?} \blue{I used $g^u$ to denote $g(u)$ as for the controls $\al^u$. But in this context, it's better to write $g(u)$, I changed it}
\begin{lemma}\label{lemma:(A+A*)+(B+B*)W-nonnegative}
Under Assumption~\ref{assum:nonnegative-definite-graphon}, it holds for all $g\in L^2([0,T]\times [0,1],\R)$ that 
$$
\int_0^1\hspace{-0.5mm}\int_0^T g(t,u)\Big(2(\lam- c_W)g(t,u)+\big((\wt{\boldsymbol{A}}+\wt{\boldsymbol{A}}^*)g(u)\big)(t)+\big((\wt{\boldsymbol{B}}+\wt{\boldsymbol{B}}^*)(\boldsymbol{W}g)(u)\big)(t)\Big)dtdu\geq 0.
$$
\end{lemma}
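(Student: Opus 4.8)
The plan is to obtain the claimed inequality by doubling the coercivity estimate in Assumption~\ref{assum:nonnegative-definite-graphon} and then symmetrising the two operator terms. Write $\langle\cdot,\cdot\rangle$ for the inner product on $L^2([0,T]\times [0,1],\R)$, so that the asserted inequality is
\[
\langle g,\, 2(\lam-c_W)g+(\wt{\boldsymbol A}+\wt{\boldsymbol A}^*)g+(\wt{\boldsymbol B}+\wt{\boldsymbol B}^*)(\boldsymbol W g)\rangle\geq 0,\qquad g\in L^2([0,T]\times [0,1],\R).
\]
Assumption~\ref{assum:nonnegative-definite-graphon} gives $\langle g,\,(\lam-c_W)g+\wt{\boldsymbol A}g+\wt{\boldsymbol B}(\boldsymbol W g)\rangle\geq 0$, so after multiplying by $2$ it suffices to establish the two identities $\langle g,2\wt{\boldsymbol A}g\rangle=\langle g,(\wt{\boldsymbol A}+\wt{\boldsymbol A}^*)g\rangle$ and $\langle g,2\wt{\boldsymbol B}(\boldsymbol W g)\rangle=\langle g,(\wt{\boldsymbol B}+\wt{\boldsymbol B}^*)(\boldsymbol W g)\rangle$, and then substitute.

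The first identity is exactly \eqref{prod-id} of Remark~\ref{rem:nonnegative-operators}, applied fibre-wise in the spatial variable $u$ (recall $\wt{\boldsymbol A}$ acts only on the time variable) and integrated over $[0,1]$; equivalently, since the inner product is real and symmetric, $\langle g,\wt{\boldsymbol A}g\rangle=\langle \wt{\boldsymbol A}^*g,g\rangle=\langle g,\wt{\boldsymbol A}^*g\rangle$, whence $2\langle g,\wt{\boldsymbol A}g\rangle=\langle g,(\wt{\boldsymbol A}+\wt{\boldsymbol A}^*)g\rangle$.

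For the second identity I would use that $\boldsymbol W$ acts only on the variable $u$ while $\wt{\boldsymbol B}$ and $\wt{\boldsymbol B}^*$ act only on the variable $t$, so these operators commute on $L^2([0,T]\times [0,1],\R)$, together with the fact that $\boldsymbol W$ is self-adjoint on $L^2([0,1],\R)$ (see the discussion around \eqref{eq:graphon-operator} and Remark~\ref{rem:spectral-decomposition}) and hence remains self-adjoint when regarded as acting on the $u$-fibres of $L^2([0,T]\times [0,1],\R)$. Then
\[
\langle g,\wt{\boldsymbol B}(\boldsymbol W g)\rangle=\langle \wt{\boldsymbol B}^*g,\boldsymbol W g\rangle=\langle \boldsymbol W\wt{\boldsymbol B}^*g,g\rangle=\langle \wt{\boldsymbol B}^*(\boldsymbol W g),g\rangle=\langle g,\wt{\boldsymbol B}^*(\boldsymbol W g)\rangle,
\]
where the first equality is the definition of the adjoint of $\wt{\boldsymbol B}$, the second is the self-adjointness of $\boldsymbol W$, the third is the commutation, and the last is the symmetry of the real inner product; adding $\langle g,\wt{\boldsymbol B}(\boldsymbol W g)\rangle$ to both ends yields the second identity. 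Substituting both identities into twice the estimate of Assumption~\ref{assum:nonnegative-definite-graphon} and rewriting the inner product as the double integral completes the argument.

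I do not anticipate a genuine obstacle: the computation is elementary once the algebra of the three operators $\wt{\boldsymbol A}$, $\wt{\boldsymbol B}$, $\boldsymbol W$ is set up. The only point deserving a few careful lines is the justification that $\boldsymbol W$ and $\wt{\boldsymbol B}$ commute and that $\boldsymbol W$ stays self-adjoint on $L^2([0,T]\times [0,1],\R)$ — both follow from Fubini's theorem and the product (tensor) structure of the space together with the fact that $\boldsymbol W$ acts trivially in the time variable.
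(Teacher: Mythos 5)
Your proof is correct and follows essentially the same route as the paper: double the coercivity inequality of Assumption~\ref{assum:nonnegative-definite-graphon} and symmetrise, using the identity $\langle g,\wt{\boldsymbol B}(\boldsymbol W g)\rangle=\langle g,\wt{\boldsymbol B}^*(\boldsymbol W g)\rangle$ coming from the self-adjointness of $\boldsymbol W$ and the product structure of $L^2([0,T]\times[0,1],\R)$. The only difference is presentational: the paper verifies this identity by an explicit kernel-level Fubini computation, whereas you phrase it operator-theoretically via commutation of $\wt{\boldsymbol B}^*$ and $\boldsymbol W$, which is the same fact.
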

\begin{proof}[Proof of Lemma~\ref{lemma:(A+A*)+(B+B*)W-nonnegative}]
The proof follows from recalling \eqref{prod-id}, taking the adjoint of the operator on the left-hand side of the inequality in Assumption~\ref{assum:nonnegative-definite-graphon}, and noting that
\be\begin{aligned}
&\int_0^1\int_0^T g(t,u)\big(\wt{\boldsymbol{B}}(\boldsymbol{W}g)(u)\big)(t)dtdu\\&=
\int_0^1\int_0^T\int_0^1\int_0^T g(t,u)\wt{B}(t,s)W(u,v)g(s,v)dsdvdtdu\\
&=\int_0^1\int_0^T g(s,v)\big(\wt{\boldsymbol{B}}^*(\boldsymbol{W}g)(v)\big)(s)dsdv,
\end{aligned}\ee
where used Fubini's theorem and the fact that $\boldsymbol{W}=\boldsymbol{W}^*$.
\end{proof}

\begin{proof}[Proof of Proposition~\ref{prop:graphon-game-continuity}]
We start by subtracting from each other the two characterizing stochastic Fredholm equations from Proposition~\ref{prop:fredholm-foc-infinite}, which yields the following equation for the difference of the two Nash equilibria,
\be\begin{aligned} \label{eq:fredholm-difference} 
2\lam (\hat\al_t^u-\hat\beta_t^u)&=(b_t^u-b^{'u}_t)- \int_0^t \wt{A}(t,s)(\hat\al_s^u-\hat\beta_s^u)ds- \int_t^T \wt{A}(s,t)\E_t[\hat\al_s^u-\hat\beta_s^u]ds\\&- \int_0^t\wt{B}(t,s)  \int_0^1 W(u,v)\hat\al^v_sdvds+ \int_0^t\wt{B}(t,s)  \int_0^1 W'(u,v)\hat\beta^v_sdvds \\
&- \hspace{-0.5mm}\int_t^T\wt{B}(s,t)  \hspace{-0.5mm}\int_0^1 W(u,v)\E_t[\hat\al^v_s]dvds
 + \hspace{-0.5mm}\int_t^T\wt{B}(s,t)  \hspace{-0.5mm}\int_0^1 W'(u,v)\E_t[\hat\beta^v_s]dvds.
\end{aligned}\ee
Rearranging \eqref{eq:fredholm-difference} and multiplying by  $(\hat\al_t^u-\hat\beta_t^u)$ yields,
\be\begin{aligned} \label{eq:fredholm-difference-squared} 
2\lam (\hat\al_t^u-\hat\beta_t^u)^2& +  (\hat\al_t^u-\hat\beta_t^u)\Big(\int_0^t \wt{A}(t,s)(\hat\al_s^u-\hat\beta_s^u)ds+ \int_t^T \wt{A}(s,t)\E_t[\hat\al_s^u-\hat\beta_s^u]ds\\&+ \int_0^t\wt{B}(t,s)  \int_0^1 W(u,v)\hat\al^v_sdvds- \int_0^t\wt{B}(t,s)  \int_0^1 W'(u,v)\hat\beta^v_sdvds \\
&+ \hspace{-1.1mm} \int_t^T\wt{B}(s,t)  \hspace{-1.1mm}\int_0^1 W(u,v)\E_t[\hat\al^v_s]dvds
 - \hspace{-1.1mm} \int_t^T\wt{B}(s,t)  \hspace{-1.1mm}\int_0^1 W'(u,v)\E_t[\hat\beta^v_s]dvds\Big)\\
 &= (\hat\al_t^u-\hat\beta_t^u)(b_t^u-b^{'u}_t)
\end{aligned}\ee
Next,  taking expectations, using the tower property of conditional expectation, integrating with respect to $t$, applying Fubini's theorem, and inserting the auxiliary term $\E\big[\int_0^T(\hat\al_t^u-\hat\beta_t^u) \big((\wt{\boldsymbol{B}}+\wt{\boldsymbol{B}}^*)(\boldsymbol{W}\hat\beta)(u)\big)(t)dt\big]$ and its additive inverse implies,
\be\begin{aligned} \label{eq:fredholm-difference-integrated} 
2\lam\E\Big[\int_0^T(\hat\al_t^u-\hat\beta_t^u)^2dt\Big]& +\E\Big[\int_0^T(\hat\al_t^u-\hat\beta_t^u) \big((\wt{\boldsymbol{A}}+\wt{\boldsymbol{A}}^*)(\hat\al^u-\hat\beta^u)\big)(t)dt\Big]\\&
+\E\Big[\int_0^T(\hat\al_t^u-\hat\beta_t^u) \big((\wt{\boldsymbol{B}}+\wt{\boldsymbol{B}}^*)(\boldsymbol{W}(\hat\al-\hat\beta))(u)\big)(t)dt\Big]\\&
+\E\Big[\int_0^T(\hat\al_t^u-\hat\beta_t^u) \big((\wt{\boldsymbol{B}}+\wt{\boldsymbol{B}}^*)((\boldsymbol{W}-\boldsymbol{W}')\hat\beta)(u)\big)(t)dt\Big]\\&
= \E\Big[\int_0^T(\hat\al_t^u-\hat\beta_t^u)(b_t^u-b^{'u}_t)dt\Big].
\end{aligned}\ee
Integration with respect to $u$ and Fubini's theorem allow us to apply Lemma~\ref{lemma:(A+A*)+(B+B*)W-nonnegative} to the first three terms on the left-hand side of \eqref{eq:fredholm-difference-integrated} to get
\be\begin{aligned} \label{eq:fredholm-difference-integrated2} 
&2 c_W\E\Big[\int_0^1\int_0^T(\hat\al_t^u-\hat\beta_t^u)^2dtdu\Big]
\\&+\E\Big[\int_0^1\int_0^T(\hat\al_t^u-\hat\beta_t^u) \big((\wt{\boldsymbol{B}}+\wt{\boldsymbol{B}}^*)((\boldsymbol{W}-\boldsymbol{W}')\hat\beta)(u)\big)(t)dtdu\Big]\\&
\leq \E\Big[\int_0^1\int_0^T(\hat\al_t^u-\hat\beta_t^u)(b_t^u-b^{'u}_t)dtdu\Big].
\end{aligned}\ee
Now, splitting up the second term on the left-hand side of \eqref{eq:fredholm-difference-integrated2} and applying the Cauchy-Schwarz inequality with underlying Hilbert space $L^2(\Omega\times [0,1]\times [0,T],\R)$ twice yields
\be\begin{aligned} \label{eq:fredholm-difference-rearranged2} 
&2 c_W\E\Big[\int_0^1\int_0^T(\hat\al_t^u-\hat\beta_t^u)^2dtdu\Big] \\&\leq\E\Big[\int_0^1\int_0^T(\hat\al_t^u)^2dtdu\Big]^\frac{1}{2}\E\Big[\int_0^1\int_0^T\big(\big((\wt{\boldsymbol{B}}+\wt{\boldsymbol{B}}^*)((\boldsymbol{W}-\boldsymbol{W}')\hat\beta)(u)\big)(t)\big)^2dtdu\Big]^\frac{1}{2}
\\&
\quad +\E\Big[\int_0^1\int_0^T(\hat\beta_t^u)^2dtdu\Big]^\frac{1}{2}\E\Big[\int_0^1\int_0^T\big(\big((\wt{\boldsymbol{B}}+\wt{\boldsymbol{B}}^*)((\boldsymbol{W}-\boldsymbol{W}')\hat\beta)(u)\big)(t)\big)^2dtdu\Big]^\frac{1}{2}
\\&
\quad + \E\Big[\int_0^1\int_0^T(\hat\al_t^u-\hat\beta_t^u)(b_t^u-b^{'u}_t)dtdu\Big].
\end{aligned}\ee
Using the inequality $\|(\wt{\boldsymbol{B}}+\wt{\boldsymbol{B}}^*)f\|_{L^2}^2\leq \|\wt{\boldsymbol{B}}+\wt{\boldsymbol{B}}^*\|_{\operatorname{op}}^2\|f\|_{L^2}^2$ for all $f\in L^2([0,T],\R)$, and the fact that $\|\wt{\boldsymbol{B}}\|_{\operatorname{op}}=\|\wt{\boldsymbol{B}}^*\|_{\operatorname{op}}$ implies
\be\begin{aligned} \label{eq:fredholm-difference-rearranged3} 
&2 c_W\E\Big[\int_0^1\int_0^T(\hat\al_t^u-\hat\beta_t^u)^2dtdu\Big] \\&\leq2\|\wt{\boldsymbol{B}}\|_{\operatorname{op}}\E\Big[\int_0^1\int_0^T(\hat\al_t^u)^2dtdu\Big]^\frac{1}{2}\E\Big[\int_0^1\int_0^T\big(((\boldsymbol{W}-\boldsymbol{W}')\hat\beta_t)(u)\big)^2dtdu\Big]^\frac{1}{2}
\\&
\quad +2\|\wt{\boldsymbol{B}}\|_{\operatorname{op}}\E\Big[\int_0^1\int_0^T(\hat\beta_t^u)^2dtdu\Big]^\frac{1}{2}\E\Big[\int_0^1\int_0^T\big(((\boldsymbol{W}-\boldsymbol{W}')\hat\beta_t)(u)\big)^2dtdu\Big]^\frac{1}{2}
\\&
\quad + \E\Big[\int_0^1\int_0^T(\hat\al_t^u-\hat\beta_t^u)(b_t^u-b^{'u}_t)dtdu\Big].
\end{aligned}\ee
Applying Fubini's theorem and then the same argument for the operator $(\boldsymbol{W}-\boldsymbol{W}')$ on $L^2([0,1],\R)$ shows that
\be\begin{aligned} \label{eq:fredholm-difference-bounded} 
&2 c_W\E\Big[\int_0^1\int_0^T(\hat\al_t^u-\hat\beta_t^u)^2dtdu\Big] \\&\leq 2\|\wt{\boldsymbol{B}}\|_{\operatorname{op}}\|\boldsymbol{W}-\boldsymbol{W}'\|_{\operatorname{op}}\Big(\E\Big[\int_0^1\int_0^T(\hat\al_t^u)^2dtdu\Big]^\frac{1}{2}\E\Big[\int_0^1\int_0^T(\hat\beta_t^u)^2dtdu\Big]^\frac{1}{2}
\\&
 \quad+\E\Big[\int_0^1\int_0^T(\hat\beta_t^u)^2dtdu\Big]\Big)
+ \E\Big[\int_0^1\int_0^T(\hat\al_t^u-\hat\beta_t^u)(b_t^u-b^{'u}_t)dtdu\Big],
\end{aligned}\ee
so that an application of Young's inequality and division by $2 c_W$ yield
\be\begin{aligned} \label{eq:fredholm-difference-bounded2} 
&\E\Big[\int_0^1\int_0^T(\hat\al_t^u-\hat\beta_t^u)^2dtdu\Big] \\&\leq\frac{1}{2 c_W}\|\wt{\boldsymbol{B}}\|_{\operatorname{op}}\|\boldsymbol{W}-\boldsymbol{W}'\|_{\operatorname{op}}\Big(\E\Big[\int_0^1\int_0^T(\hat\al_t^u)^2dtdu\Big]+3\E\Big[\int_0^1\int_0^T(\hat\beta_t^u)^2dtdu\Big]\Big)
\\&\quad +\frac{1}{2 c_W} \E\Big[\int_0^1\int_0^T(\hat\al_t^u-\hat\beta_t^u)(b_t^u-b^{'u}_t)dtdu\Big].
\end{aligned}\ee
Finally, separating $\hat\al_t^u$ and $\hat\beta_t^u$ in the last term on the right-hand side of \eqref{eq:fredholm-difference-bounded2} and applying the Cauchy-Schwarz inequality twice completes the proof.
\end{proof}

We proceed with the proofs of Section~\ref{subsection:convergence-sampled-games}. The proof Theorem~\ref{thm:convergence-given-graphs} from Section~\ref{subsection:convergence-general-games} is given at the end of this section. We first prove the auxiliary Lemma~\ref{lemma:aux}.

\begin{proof}[Proof of Lemma~\ref{lemma:aux}]
In the setup of Theorem~\ref{thm:convergence-sampled-graphs}, there exists a $c_0>0$ independent of $N$ such that Assumption~\ref{assum:nonnegative-definite-graph} is satisfied for every matrix $w^N\in[0,1]^{N\times N}$ with zero diagonal entries and every $N\in\N$. Therefore, it holds that 
\be
\langle f, \lambda f +\wt{\boldsymbol{A}}f+ \frac{1}{N}w^N\cdot\wt{\boldsymbol{B}}f-c_0 f\rangle_{L^2,N}\geq 0,\quad \text{for all } f\in L^2([0,T],\R^N).
\ee   
It follows for all $f\in L^2([0,T],\R^N)$ that  
\be\label{eq:aux-lemma1}
\langle f, \lambda f +\wt{\boldsymbol{A}}f+ \frac{1}{N}\kappa_N^{-1}s^N\cdot\wt{\boldsymbol{B}}f\rangle_{L^2,N}\geq c_0\langle f,f\rangle_{L^2,N}-\langle f,\frac{1}{N}(w^N-\kappa_N^{-1}s^N)\cdot\wt{\boldsymbol{B}}f\rangle_{L^2,N}.
\ee   
Next, let $\|\cdot\|_2$ denote the spectral norm of a matrix. Then by the Cauchy-Schwarz inequality, the consistency of $\|\cdot\|_2$ with the Euclidean norm,  and the fact that $\smash{\|\wt{\boldsymbol{B}}f\|_{L^2,N}\leq \|\wt{\boldsymbol{B}}\|_{\operatorname{op}}\|f\|_{L^2,N}}$ for all $f\in L^2([0,T],\R^N)$, it holds that
\be\begin{aligned}\label{eq:aux-lemma2}
\Big|\langle f,\frac{1}{N}(w^N-\kappa_N^{-1}s^N)\cdot\wt{\boldsymbol{B}}f\rangle_{L^2,N}\Big|
&\leq \| f\|_{L^2,N}\cdot\frac{1}{N}\|w^N-\kappa_N^{-1}s^N\|_2\cdot\| \wt{\boldsymbol{B}}f\|_{L^2,N}\\
&\leq \frac{1}{N}\|w^N-\kappa_N^{-1}s^N\|_2\cdot\|\wt{\boldsymbol{B}}\|_{\operatorname{op}}\cdot\| f\|^2_{L^2,N}.
\end{aligned}\ee
Now it follows from the proof of Theorem~1 in \citet{avella2018centrality} that for any $0<\dl<e^{-1}$ there is an $\smash{\wt{N}_\dl\in\N}$ such that for all $\smash{N\geq \wt{N}_\dl}$ it holds that 
\be\label{eq:aux-lemma3}
\frac{1}{N}\|w^N-\kappa_N^{-1}s^N\|_2\leq\sqrt{\frac{\kappa_N^{-1}\log(2N/\dl)}{N}},
\ee
with $\Q$-probability at least $1-\dl$ and $1-2\dl$ in case \ref{S3} and case \ref{S4}, respectively. Since the right-hand side of \eqref{eq:aux-lemma3} converges to $0$ as $N\to\infty$ by assumption, it follows from 
\eqref{eq:aux-lemma1} and \eqref{eq:aux-lemma2}   that there is an $\smash{N_\dl\geq \wt{N}_\dl}$ such that inequality \eqref{eq:aux-lemma} is satisfied with $\Q$-probability at least $1-\dl$ in case \ref{S3} and $1-2\dl$ in case \ref{S4} for all $N\geq N_\dl$.
\end{proof}

The following lemma gives a uniform bound on the sampled $N$-player game Nash equilibria, and is needed for the proof of Theorem~\ref{thm:convergence-sampled-graphs} as well.

\begin{lemma}\label{lemma:uniform-bound}
Under the assumptions of Theorem~\ref{thm:convergence-sampled-graphs}, the sampled $N$-player game Nash equilibria $\hat{\al}^N$ are uniformly bounded in $N\in\N$. Namely, in cases \ref{S1}-\ref{S2}, there exists a constant $C_{1,2}>0$ such that
$$ 
\sup_{N\in\N} \E\Big[\int_0^1\int_0^T(\hat\al^{u,N}_{t,\operatorname{step}})^2dtdu\Big]\leq C_{1,2}. 
$$
In cases \ref{S3}-\ref{S4}, there exists a constant $C_{3,4}>0$ such that for every $0<\dl<e^{-1}$  
$$ 
\sup_{N\geq N_\dl} \E\Big[\int_0^1\int_0^T(\hat\beta^{u,N}_{t,\operatorname{step}})^2dtdu\Big]\leq C_{3,4},
$$
with $\Q$-probability at least $1-\dl$ in case \ref{S3} and $1-2\dl$  in case \ref{S4}.
\end{lemma}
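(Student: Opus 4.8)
The plan is a direct energy estimate on the stochastic Fredholm characterization of the sampled Nash equilibria, combined with the noise bound \eqref{eq:noise-convergence}. By Proposition~\ref{prop:fredholm-foc-graph}, in cases \ref{S1}--\ref{S2} the equilibrium $\hat\al^N$ solves the coupled system \eqref{eq:fredholm-foc-graph} with interaction matrix $w^N$, while in cases \ref{S3}--\ref{S4} the equilibrium $\hat\beta^N$ solves the same system with $w^N$ replaced by $\kappa_N^{-1}s^N$ (Remark~\ref{rem:correspondence-with-kappa_N}), on the $\Q$-event of Lemma~\ref{lemma:aux} where it exists. In each case I would multiply the $i$-th equation by the $\mathcal{F}_t$-measurable variable $\hat\al^{i,N}_t$ (resp.\ $\hat\beta^{i,N}_t$), sum over $i=1,\dots,N$, take expectations, integrate in $t$ over $[0,T]$, and apply Fubini's theorem together with the tower property. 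Since $\hat\al^{i,N}_t$ is $\mathcal{F}_t$-measurable, each term of the form $\E[\hat\al^{i,N}_t\,\E_t[\hat\al^{j,N}_s]]$ collapses to $\E[\hat\al^{i,N}_t\hat\al^{j,N}_s]$, so, using that $\wt{A}$ and $\wt{B}$ are Volterra kernels, the forward and backward pieces recombine into the operators $\wt{\boldsymbol A}+\wt{\boldsymbol A}^*$ and $\wt{\boldsymbol B}+\wt{\boldsymbol B}^*$. This produces the identity
\be
\E\Big[\big\langle \hat\al^N,\big(2\lam\boldsymbol{I}^N+(\wt{\boldsymbol A}+\wt{\boldsymbol A}^*)+\tfrac{1}{N}w^N\cdot(\wt{\boldsymbol B}+\wt{\boldsymbol B}^*)\big)\hat\al^N\big\rangle_{L^2,N}\Big]=\E\big[\langle b^N,\hat\al^N\rangle_{L^2,N}\big],
\ee
with $b^N=(b^{1,N},\dots,b^{N,N})$ and $\boldsymbol{I}^N$ the identity on $L^2([0,T],\R^N)$; in cases \ref{S3}--\ref{S4} the same identity holds with $\hat\al^N$ replaced by $\hat\beta^N$ and $w^N$ by $\kappa_N^{-1}s^N$.

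Next, Remark~\ref{rem:nonnegative-operators} and identity \eqref{prod-id} show that the left-hand side equals $2\,\E[\langle\hat\al^N,(\lam\boldsymbol{I}^N+\wt{\boldsymbol A}+\tfrac{1}{N}w^N\cdot\wt{\boldsymbol B})\hat\al^N\rangle_{L^2,N}]$, which Assumption~\ref{assum:nonnegative-definite-graph} bounds below by $2c_0\,\E[\|\hat\al^N\|_{L^2,N}^2]$ in cases \ref{S1}--\ref{S2}; in cases \ref{S3}--\ref{S4}, Lemma~\ref{lemma:aux} gives the lower bound $c_0\,\E[\|\hat\beta^N\|_{L^2,N}^2]$ for $N\geq N_\dl$, on the $\Q$-event of probability at least $1-\dl$ (case \ref{S3}), resp.\ $1-2\dl$ (case \ref{S4}). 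Applying Cauchy--Schwarz in $L^2(\Omega\times[0,T],\R^N)$ to the right-hand side, $\E[\langle b^N,\hat\al^N\rangle_{L^2,N}]\leq\E[\|b^N\|_{L^2,N}^2]^{1/2}\E[\|\hat\al^N\|_{L^2,N}^2]^{1/2}$, cancelling one factor and squaring yields $\E[\|\hat\al^N\|_{L^2,N}^2]\leq(2c_0)^{-2}\E[\|b^N\|_{L^2,N}^2]$, and the same with $c_0$ replaced by $c_0/2$ in cases \ref{S3}--\ref{S4}. Since $\hat\al^{u,N}_{t,\operatorname{step}}$ and $b^{u,N}_{t,\operatorname{step}}$ are the step functions over the partition $\mathcal{P}^N$ from \eqref{partition}, one has the rescaling $\E[\int_0^1\!\int_0^T(\hat\al^{u,N}_{t,\operatorname{step}})^2dtdu]=\tfrac{1}{N}\E[\|\hat\al^N\|_{L^2,N}^2]$ and likewise for $b^N$, so dividing by $N$ turns the previous bound into the claimed inequality with $C_{1,2}$ (resp.\ $C_{3,4}$) a multiple of $\sup_{N}\E[\int_0^1\!\int_0^T(b^{u,N}_{t,\operatorname{step}})^2dtdu]$.

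Finally, this supremum is finite precisely by hypothesis \eqref{eq:noise-convergence}: the triangle inequality in $L^2(\Omega\times[0,T]\times[0,1],\R)$ gives
\be
\E\Big[\int_0^1\!\int_0^T(b^{u,N}_{t,\operatorname{step}})^2dtdu\Big]^{1/2}\leq\E\Big[\int_0^1\!\int_0^T(b_t^u)^2dtdu\Big]^{1/2}+\pi(N),
\ee
where the first term is finite since $b\in L^2(\Omega\times[0,T]\times[0,1],\R)$ by Assumption~\ref{assum:A-B-C-graphon} and $\pi$ is bounded by hypothesis, so the resulting bound is uniform in $N$. The one delicate point is the bookkeeping in cases \ref{S3}--\ref{S4}: there $\hat\beta^N$ and the coercivity constant are random, depending on the realized sampled graph, so the whole estimate must be run on the high-probability $\Q$-event of Lemma~\ref{lemma:aux}, and one relies on the fact that the lower bound $c_0/2$ provided there is uniform in $N\geq N_\dl$, which is what makes $C_{3,4}$ independent of $N$. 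All remaining manipulations (Fubini, the tower property, Cauchy--Schwarz) are routine.
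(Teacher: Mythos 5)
Your argument is correct and follows essentially the same route as the paper's proof: testing the Fredholm first-order condition from Proposition~\ref{prop:fredholm-foc-graph} against the equilibrium itself, invoking the coercivity of Assumption~\ref{assum:nonnegative-definite-graph} (resp.\ Lemma~\ref{lemma:aux} with constant $c_0/2$ on the high-probability event in cases \ref{S3}--\ref{S4}), applying Cauchy--Schwarz, and rescaling by $1/N$ to pass to step functions, with the noise supremum controlled by \eqref{eq:noise-convergence}. Your explicit triangle-inequality justification of the finiteness of $\sup_N\E[\int_0^1\int_0^T(b^{u,N}_{t,\operatorname{step}})^2dtdu]$ is a harmless elaboration of the same point the paper makes more tersely.
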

\begin{proof}[Proof of Lemma~\ref{lemma:uniform-bound}]
First, fix one of the cases \ref{S1}-\ref{S2} from Definition~\ref{def:sampling} and denote by $w^N$ the sampled probability matrix.
Then, since Assumptions~\ref{assum:A-B-C-graph} and \ref{assum:nonnegative-definite-graph} are satisfied, the sampled $N$-player game admits a unique Nash equilibrium $\hat{\al}^N$ given by Corollary~\ref{cor:finite-player-equilibrium}. In order to uniformly bound the Nash equilibria $\hat{\al}^N$ over all $N\in\N$, recall that by Proposition~\ref{prop:fredholm-foc-graph} $\hat\al^N$ satisfies
\be \begin{aligned} \label{eq:uniform-bound-1}
2 \lambda \hat{\al}_t^{i,N} &+ \int_0^t   \wt{A}(t,s) \hat{\al}^{i,N}_s ds +  \int_t^T \wt{A}(s,t)  \E_t[\hat{\al}^{i,N}_s] ds+ \int_0^t   \wt{B}(t,s) \frac{1}{N}\sum_{j=1}^Nw_{ij}^N\hat{\al}^{j,N}_s ds \\  & + \int_t^T \wt{B}(s,t)   \frac{1}{N}\sum_{j=1}^Nw_{ij}^N\E_t[\hat{\al}^{j,N}_s] ds = b^{i,N}_t,  \quad d\P \otimes dt \text{-a.e.~on } \Omega \times [0,T],
\end{aligned}\ee 
for all $i\in\{1,\ldots, N\}$. Next, using vector notation in \eqref{eq:uniform-bound-1}, multiplying by $(\hat\al_t^N)^\top$, taking expectations, using the tower property of conditional expectation, integrating with respect to $t$, and applying Fubini's theorem yields

\be\begin{aligned}\label{eq:uniform-bound-2}
2\lam\E\Big[\int_0^T\|\hat\al_t^N\|^2dt\Big]& +\E\Big[\int_0^T(\hat\al_t^N)^\top \big((\wt{\boldsymbol{A}}+\wt{\boldsymbol{A}}^*)\hat\al^N\big)(t)dt\Big]\\&
+\E\Big[\int_0^T(\hat\al_t^N)^\top \frac{1}{N}w^N\big((\wt{\boldsymbol{B}}+\wt{\boldsymbol{B}}^*)\hat\al^N\big)(t)dt\Big]\\&
= \E\Big[\int_0^T(\hat\al_t^N)^\top b_t^Ndt\Big].
\end{aligned}\ee
Since Assumption~\ref{assum:nonnegative-definite-graph} holds for a $c_0>0$ independent of $N$, it follows  that
\be\label{eq:uniform-bound-3}
2c_0\E\Big[\int_0^T\|\hat\al_t^N\|^2dt\Big]\leq \E\Big[\int_0^T(\hat\al_t^N)^\top b_t^Ndt\Big].
\ee
Dividing by $N$ and using the notation from \eqref{eq:step-function} yields 
\be\label{eq:uniform-bound-4}
2c_0\E\Big[\int_0^T\int_0^1(\hat\al_{t,\operatorname{step}}^{u,N})^2dudt\Big]\leq \E\Big[\int_0^T\int_0^1\hat\al_{t,\operatorname{step}}^{u,N} b_{t,\operatorname{step}}^{u,N}dudt\Big].
\ee
Therefore, the Cauchy-Schwarz inequality implies that
\be\label{eq:uniform-bound-5}
2c_0\E\Big[\int_0^T\int_0^1(\hat\al_{t,\operatorname{step}}^{u,N})^2dudt\Big]\leq \E\Big[\int_0^T\int_0^1(\hat\al_{t,\operatorname{step}}^{u,N})^2dudt\Big]^\frac{1}{2}\E\Big[\int_0^T\int_0^1(b_{t,\operatorname{step}}^{u,N})^2dudt\Big]^\frac{1}{2}.
\ee
It follows that
\be\label{eq:uniform-bound-6}
4c_0^2\E\Big[\int_0^T\int_0^1(\hat\al_{t,\operatorname{step}}^{u,N})^2dudt\Big]\leq \E\Big[\int_0^T\int_0^1(b_{t,\operatorname{step}}^{u,N})^2dudt\Big],
\ee
which yields a uniform bound in $N\in\N$ on the left-hand side of \eqref{eq:uniform-bound-6}, since the right-hand side of \eqref{eq:uniform-bound-6} converges by assumption of Theorem~\ref{thm:convergence-sampled-graphs}. Setting
$$
C_{1,2}:=\frac{c_0^{-2}}{4}\sup_{N\in\N} \E\Big[\int_0^1\int_0^T(b^{u,N}_{t,\operatorname{step}})^2dtdu\Big]
$$
completes the proof for cases \ref{S1}-\ref{S2}.

Second, fix one of the sampling procedures \ref{S3}-\ref{S4} from Definition~\ref{def:sampling} and denote by $s^N$ the adjacency matrix of the sampled simple graph. We can not proceed as for the cases \ref{S1}-\ref{S2} by directly exploiting Assumption~\ref{assum:nonnegative-definite-graph}, since the entries of $\kappa_N^{-1}s^N$ might be larger than $1$, especially for sparse density parameter sequences. However, by Lemma~\ref{lemma:aux}, for every $0<\dl<e^{-1}$ Assumption~\ref{assum:nonnegative-definite-graph} with $w^N$ replaced by $\kappa_N^{-1}s^N$ and $c_0$ replaced by $c_0/2$ is satisfied with $\Q$-probability at least $1-\dl$ in case \ref{S3} and $1-2\dl$ in case \ref{S4} for all $N\geq N_\dl$.
Therefore, also recalling Remark \ref{rem:correspondence-with-kappa_N},
we can proceed analogously to cases \ref{S1}-\ref{S2} above with $w^N$ replaced by $\kappa_N^{-1}s^N$ in \eqref{eq:uniform-bound-1} and $c_0$ replaced by $c_0/2$. Setting
$$
C_{3,4}:=c_0^{-2}\sup_{N\in\N} \E\Big[\int_0^1\int_0^T(b^{u,N}_{t,\operatorname{step}})^2dtdu\Big]
$$
then completes the proof for cases \ref{S3}-\ref{S4}.
\end{proof}

We can now prove the main theorem of Section~\ref{subsection:convergence-sampled-games}.
\begin{proof}[Proof of Theorem~\ref{thm:convergence-sampled-graphs}]
Let the assumptions of Theorem~\ref{thm:convergence-sampled-graphs} be satisfied. 
First, fix one of the sampling procedures \ref{S1}-\ref{S2} from Definition~\ref{def:sampling} and denote by $w^N$ the sampled probability matrix. Then,  under the correspondence of Proposition~\ref{prop:correspondence-finite-infinite-game}, the unique Nash equilibrium $\hat{\al}^N\in\mathcal{A}^N$ corresponds to the step function strategy profile $\hat{\al}_{\operatorname{step}}^N\in\mathcal{A}^\infty$ defined in \ref{eq:step-function}, which is a Nash equilibrium of a graphon game with underlying step graphon $W^N$ corresponding to $w^N$ as in \eqref{eq:step-graphon} and step function noise process $b^{N}_{\operatorname{step}}$ corresponding to $(b^{1,N},\ldots, b^{N,N})$ as in \eqref{eq:step-function}.
Proposition~\ref{prop:graphon-game-continuity} implies that
\be\begin{aligned} \label{eq:convergence-1} 
&\E\Big[\int_0^1\int_0^T(\hat\al_t^u-\hat\al_{t,\operatorname{step}}^{u,N})^2dtdu\Big] \\&\leq\frac{1}{2 c_W}\|\wt{\boldsymbol{B}}\|_{\operatorname{op}}\|\boldsymbol{W}-\boldsymbol{W}^N\|_{\operatorname{op}}\Big(\E\Big[\int_0^1\int_0^T(\hat\al_t^u)^2dtdu\Big]+3\E\Big[\int_0^1\int_0^T(\hat\al_{t,\operatorname{step}}^{u,N})^2dtdu\Big]\Big)
\\&+\frac{1}{2 c_W} \E\Big[\int_0^1\int_0^T(b_t^u-b^{u,N}_{t,\operatorname{step}})^2dtdu\Big]^\frac{1}{2}\Big(\E\Big[\int_0^1\int_0^T(\hat\al_t^u)^2dtdu\Big]^\frac{1}{2}\\&+\E\Big[\int_0^1\int_0^T(\hat\al_{t,\operatorname{step}}^{u,N})^2dtdu\Big]^\frac{1}{2}\Big).
\end{aligned}\ee
Now inequality \eqref{eq:convergence-1}, Theorem~\ref{thm:avella}, Lemma~\ref{lemma:uniform-bound}, and \eqref{eq:noise-convergence} imply statements \eqref{eq:convergence-theorem-1} for \ref{S1} and \eqref{eq:convergence-theorem-2} for  \ref{S2}, where for $ \smash{C_{\hat\al}:=\E[\int_0^1\int_0^T(\hat\al_t^u)^2dtdu]<\infty}$ the two constants $K_1$, $K_2<\infty$ therein are given by 
$$
K_1=K_2=\frac{1}{2 c_W}\max\big\{\|\wt{\boldsymbol{B}}\|_{\operatorname{op}}(C_{\hat\al}+3C_{1,2}),\ \sqrt{C_{\hat\al}}+\sqrt{C_{1,2}}\big\}.
$$
In particular, it follows in case \ref{S1} that 
$$
\E\Big[\int_0^1\int_0^T(\hat\al_t^u-\hat\al^{u,N}_{t,\operatorname{step}})^2dtdu\Big]\xrightarrow{N\to\infty}0.
$$
In case \ref{S2}, assume that there exists an $\eps>0$ such that
\be\label{eq:2delta}
\bar\dl:=\Q\left(\limsup_{N\to\infty}\Big\{\E\Big[\int_0^1\int_0^T(\hat\al_t^u-\hat\al^{u,N}_{t,\operatorname{step}})^2dtdu\Big]>\eps\Big\}\right)>0.
\ee
Choose $0<\dl<\min\{\bar\dl,e^{-1}\}$. By \eqref{eq:convergence-theorem-2}, since $\rho(N)$ in \eqref{eq:rho(N)} and $\pi(N)$ in \eqref{eq:noise-convergence} converge to 0 as $N\to\infty$, there exists an $N(\eps)\in\N$ such that
\be
\Q\left(\Big\{\E\Big[\int_0^1\int_0^T(\hat\al_t^u-\hat\al^{u,N}_{t,\operatorname{step}})^2dtdu\Big]\leq\eps\Big\}\right)\geq 1-\dl
\ee
for all $N\geq N(\eps)$, which contradicts \eqref{eq:2delta}, and thus yields
$$\E\Big[\int_0^1\int_0^T(\hat\al_t^u-\hat\al^{u,N}_{t,\operatorname{step}})^2dtdu\Big]\xrightarrow{N\to\infty}0,\quad \Q\text{-almost surely.}
$$
Second, fix one of the sampling procedures \ref{S3}-\ref{S4} from Definition~\ref{def:sampling} and denote by $s^N$ the adjacency matrix of the sampled simple graph. Then, for any $0<\dl<e^{-1}$ there exists an $N_\dl\in\N$ such that for all $N\geq N_\dl$ the sampled $N$-player game defined in \eqref{eq:J_0^iN-rewritten} with modification \eqref{eq:aggregate-sparse} admits a unique Nash equilibrium $\hat\beta^N\in\mathcal{A}^N$ corresponding to  $\kappa_N^{-1} s^N$ with $\Q$-probability at least $1-\dl$ in case \ref{S3} and $1-2\dl$ in case \ref{S4} (by Corollary~\ref{cor:finite-player-equilibrium} and Lemma~\ref{lemma:aux}). Under the correspondence of Proposition~\ref{prop:correspondence-finite-infinite-game} and Remark \ref{rem:correspondence-with-kappa_N}, $\hat{\beta}^N$  corresponds to the step function strategy profile $\smash{\hat{\beta}_{\operatorname{step}}^N\in\mathcal{A}^\infty}$ defined in \ref{eq:step-function}, which is a Nash equilibrium of a graphon game with underlying step graphon $\smash{\kappa_N^{-1}S^N}$ corresponding to $\smash{\kappa_N^{-1}s^N}$ as in \eqref{eq:step-graphon} and step function noise process $b^{N}_{\operatorname{step}}$ corresponding to $(b^{1,N},\ldots, b^{N,N})$ as in \eqref{eq:step-function}.
Now Proposition~\ref{prop:graphon-game-continuity} implies that
\be\begin{aligned} \label{eq:convergence-2} 
&\E\Big[\int_0^1\int_0^T(\hat\al_t^u-\hat\al_{t,\operatorname{step}}^{u,N})^2dtdu\Big] \\&\leq\frac{1}{2 c_W}\|\wt{\boldsymbol{B}}\|_{\operatorname{op}}\|\boldsymbol{W}-\kappa_N^{-1}\boldsymbol{S}^N\|_{\operatorname{op}}\Big(\E\Big[\int_0^1\int_0^T(\hat\al_t^u)^2dtdu\Big]+3\E\Big[\int_0^1\int_0^T(\hat\al_{t,\operatorname{step}}^{u,N})^2dtdu\Big]\Big)
\\&+\frac{1}{2 c_W} \E\Big[\int_0^1\int_0^T(b_t^u-b^{u,N}_{t,\operatorname{step}})^2dtdu\Big]^\frac{1}{2}\Big(\E\Big[\int_0^1\int_0^T(\hat\al_t^u)^2dtdu\Big]^\frac{1}{2}\\&+\E\Big[\int_0^1\int_0^T(\hat\al_{t,\operatorname{step}}^{u,N})^2dtdu\Big]^\frac{1}{2}\Big).
\end{aligned}\ee
Inequality \eqref{eq:convergence-2}, Theorem~\ref{thm:avella}, Lemma~\ref{lemma:uniform-bound}, and \eqref{eq:noise-convergence} imply statements \eqref{eq:convergence-theorem-3} and \eqref{eq:convergence-theorem-4}, where the two constants $K_3$, $K_4<\infty$ therein are given by 
$$
K_3=K_4=\frac{1}{2 c_W}\max\big\{\|\wt{\boldsymbol{B}}\|_{\operatorname{op}}(C_{\hat\al}+3C_{3,4}),\ \sqrt{C_{\hat\al}}+\sqrt{C_{3,4}}\big\}.
$$
Finally, we want to prove almost sure convergence in the cases \ref{S3}-\ref{S4}. Assume that there exists an $\eps>0$ such that
\be\label{eq:delta-bar}
\bar\dl:=\Q\left(\limsup_{N\to\infty}\Big\{\E\Big[\int_0^1\int_0^T(\hat\al_t^u-\hat\beta^{u,N}_{t,\operatorname{step}})^2dtdu\Big]>\eps\Big\}\right)>0.
\ee
Choose $0<\dl<\min\{\bar\dl/2,e^{-1}\}$. By \eqref{eq:convergence-theorem-3} and \eqref{eq:convergence-theorem-4}, respectively, since $\rho'(N)$ in \eqref{eq:rho'(N)} and $\pi(N)$ in \eqref{eq:noise-convergence} converge to 0 as $N\to\infty$, because $\tfrac{\log N}{\kappa_NN}$ does, there exists an $N(\eps)\in\N$ such that
\be
\Q\left(\Big\{\E\Big[\int_0^1\int_0^T(\hat\al_t^u-\hat\beta^{u,N}_{t,\operatorname{step}})^2dtdu\Big]\leq\eps\Big\}\right)\geq 1-2\dl>1-\bar\dl
\ee
for all $N\geq N(\eps)$, which contradicts \eqref{eq:2delta} and thus yields
$$\E\Big[\int_0^1\int_0^T(\hat\al_t^u-\hat\beta^{u,N}_{t,\operatorname{step}})^2dtdu\Big]\xrightarrow{N\to\infty}0,\quad \Q\text{-almost surely.}
$$
\end{proof}
Finally, we proceed with the proof of the results from Section~\ref{subsection:convergence-general-games}. 
\begin{proof}[Proof of Theorem~\ref{thm:convergence-given-graphs}]
    We first note that the Nash equilibria $\hat{\al}^N$ of the finite-player games are uniformly bounded in $N\in\N$. Namely, there exists a constant $C_0<\infty$ such that
\be \label{eq:uniform-bound-C0}
\sup_{N\in\N} \E\Big[\int_0^1\int_0^T(\hat\al^{u,N}_{t,\operatorname{step}})^2dtdu\Big]\leq C_0. 
\ee
This can be seen in the exact same way as in the first part of the proof of Lemma~\ref{lemma:uniform-bound}, since Assumption~\ref{assum:nonnegative-definite-graph} is satisfied for a $c_0>0$ independent of $N$. 

Next,  under the correspondence of Proposition~\ref{prop:correspondence-finite-infinite-game}, the unique Nash equilibrium $\hat{\al}^N\in\mathcal{A}^N$ corresponds to the step function strategy profile $\hat{\al}_{\operatorname{step}}^N\in\mathcal{A}^\infty$ defined in \ref{eq:step-function}, which is a Nash equilibrium of a graphon game with underlying step graphon $W^N$ corresponding to $w^N$ as in \eqref{eq:step-graphon} and step function noise process $b^{N}_{\operatorname{step}}$ corresponding to $(b^{1,N},\ldots, b^{N,N})$ as in \eqref{eq:step-function}.
Proposition~\ref{prop:graphon-game-continuity} implies that
inequality \eqref{eq:convergence-1} holds in the setting of Theorem~\ref{thm:convergence-given-graphs} as well. Therefore, \eqref{eq:noise-convergence-given} and Remark \ref{rem:relation-cut-op} imply inequality  \eqref{eq:convergence-theorem}, where for $ \smash{C_{\hat\al}:=\E[\int_0^1\int_0^T(\hat\al_t^u)^2dtdu]<\infty}$ the constant $K<\infty$ therein is given by 
$$
K=\frac{1}{2 c_W}\max\big\{\sqrt{8}\,\|\wt{\boldsymbol{B}}\|_{\operatorname{op}}(C_{\hat\al}+3C_0),\ \sqrt{C_{\hat\al}}+\sqrt{C_0}\big\}.
$$
\end{proof}

%\bibliographystyle{abbrvnat}
%\bibliography{Stochastic_Graphon_Games_with_Memory}
%\newpage

\appendix
\section{Appendix}\label{appendix}
\subsection{Example on systemic risk}
In the first part of the Appendix, we study an example which complements our results from Section~\ref{subsec:systemic-risk}. It focuses on a canonical special case of the model introduced therein, where there is exactly one repayment after a fixed time $0\leq\tau\leq T$. We verify Assumption~\ref{assum:nonnegative-definite} under different conditions on the interaction matrix $w^{sys}$ and the coefficients $\kappa^i,\eps^i,c^i$ defining the cost functionals.
\begin{example}\label{ex:systemic}
Fix $0\leq \tau\leq T$ and assume that $\nu^i=\dl_0-\dl_{\tau}$ for all $i=1,\ldots, N$. Recalling \eqref{eq:systemicGii}, it follows for all $i,j=1,\ldots, N$ that
\be
G_1^{ij}(t,s)=\dl_{ij}\mathds{1}_{\{0<t-s<\tau\}}\quad\text{and}\quad G_2^{ij}(t,s)=w^{sys}_{ij} \mathds{1}_{\{0<t-s<\tau\}},
\ee
and therefore that $G^{ij}(t,s)=(\dl_{ij},w^{sys}_{ij})^\top \mathds{1}_{\{0<t-s<\tau\}}$. Thus, for $i,j=1,\dots, N$, the kernel $\Gamma_{ij}$ in Lemma~\ref{lemma:Volterra-game}  is given by 
\be
\begin{aligned}
\Gamma_{ij}(t,s)=&\int_{s\vee t}^T \eps^i (1-w^{sys}_{ii})(\dl_{ij}-w^{sys}_{ij})\mathds{1}_{\{0<u-t<\tau\}} \mathds{1}_{\{0<u-s<\tau\}} du \label{eq:systemic-example-gamma}\\[0.5ex]
& + c^i (1-w^{sys}_{ii})(\dl_{ij}-w^{sys}_{ij})\mathds{1}_{\{0<T-t<\tau\}} \mathds{1}_{\{0<T-s<\tau\}}\\[1.5ex]
& +\kappa^i(\dl_{ij}-w^{sys}_{ij})\mathds{1}_{\{0<t-s<\tau\}}+\kappa^i\dl_{ij}(1-w^{sys}_{ii})\mathds{1}_{\{0<s-t<\tau\}}.
\end{aligned}
\ee
We first focus on the first two terms of \eqref{eq:systemic-example-gamma}, which we denote by $\Gamma^1=(\Gamma^\mathds{1}_{ij})_{i,j=1}^N$ and $\Gamma^2=(\Gamma^2_{ij})_{i,j=1}^N$, respectively. Let $\Delta w^{sys}\vcentcolon=\operatorname{diag}(w^{sys}_{11},\ldots,w^{sys}_{NN})$ denote the diagonal matrix that has as entries the diagonal entries of $w^{sys}$. Define $\Delta c\vcentcolon=\operatorname{diag}(c^1,\ldots,c^N)$, $\Delta \eps\vcentcolon=\operatorname{diag}(\eps^1,\ldots,\eps^N)$ and $\Delta \kappa\vcentcolon=\operatorname{diag}(\kappa^1,\ldots,\kappa^N)$. Let $f\in L^2([0,T],\R^N)$. Then it follows from Fubini's theorem that
\be\begin{aligned}\label{eq:systemic-example-gamma1}
    &\langle f, \boldsymbol{\Gamma^1}f\rangle_{L^2,N}\\&=\int_0^T\int_0^Tf(t)^\top \int_{s\vee t}^T \Delta\eps (\operatorname{Id}^N-\Delta w^{sys})(\operatorname{Id}^N-w^{sys})\mathds{1}_{\{0<u-t<\tau\}} \mathds{1}_{\{0<u-s<\tau\}} du f(s)dsdt\\
    &=\int_0^T\left(\int_{u-\tau}^u f(t)dt\right)^\top  \Delta\eps (\operatorname{Id}^N-\Delta w^{sys})(\operatorname{Id}^N-w^{sys}) \left(\int_{u-\tau}^u f(s)ds\right) du,
\end{aligned}\ee
and analogously that 
\be\begin{aligned}\label{eq:systemic-example-gamma2}
    &\langle f, \boldsymbol{\Gamma^2}f\rangle_{L^2,N}\\&=\int_0^T\int_0^Tf(t)^\top \Delta c (\operatorname{Id}^N-\Delta w^{sys})(\operatorname{Id}^N-w^{sys})\mathds{1}_{\{0<T-t<\tau\}} \mathds{1}_{\{0<T-s<\tau\}} f(s)dsdt\\
    &=\left(\int_{T-\tau}^T  f(t)dt\right)^\top  \Delta c (\operatorname{Id}^N-\Delta w^{sys})(\operatorname{Id}^N-w^{sys}) \left(\int_{T-\tau}^T f(s)ds\right).
\end{aligned}
\ee
Now the two terms \eqref{eq:systemic-example-gamma1} and \eqref{eq:systemic-example-gamma2} are nonnegative, whenever the two matrices $\Delta\eps(\operatorname{Id}^N-\Delta w^{sys})(\operatorname{Id}^N-w^{sys})$ and $\Delta c(\operatorname{Id}^N-\Delta w^{sys})(\operatorname{Id}^N-w^{sys})$ are both nonnegative definite.
Finally, we focus on the third and fourth term of \eqref{eq:systemic-example-gamma}, which we combine and denote by $\Gamma^3=(\Gamma^3_{ij})_{i,j=1}^N$. We get for $f\in L^2([0,T],\R^N)$ the following bound,
\be\begin{aligned}\label{eq:systemic-example-gamma3}
\langle f, \boldsymbol{\Gamma^3}f\rangle_{L^2,N}&\geq - \int_0^T\int_0^T\big|f(t)^\top \Delta\kappa\big(2\operatorname{Id}^N-w^{sys}-\Delta w^{sys}\big) \mathds{1}_{\{0<s-t<\tau\}}f(s)\big|dsdt \\[1ex]
&\geq -\int_0^T\int_0^T\mathds{1}_{\{t>s\}}\big\|f(t)\big\|\,\big\| \Delta\kappa\big(2\operatorname{Id}^N-w^{sys}-\Delta w^{sys}\big) \big\| \big\|f(s)\big\|dsdt \\[1ex]
&=-\frac{1}{2} \int_0^T\int_0^T\big\|f(t)\big\|\,\big\| \Delta\kappa\big(2\operatorname{Id}^N-w^{sys}-\Delta w^{sys}\big) \big\| \big\|f(s)\big\|dsdt \\[1ex]
&\geq-\frac{1}{4} \int_0^T\int_0^T\Big(\big\|f(t)\big\|^2+\big\|f(s)\big\|^2\Big)\,\big\| \Delta\kappa\big(2\operatorname{Id}^N-w^{sys}-\Delta w^{sys}\big) \big\| dsdt\\[1ex]
&=-\frac{T}{2}\, \big\| \Delta\kappa\big(2\operatorname{Id}^N-w^{sys}-\Delta w^{sys}\big) \big\|\, \langle f, f\rangle_{L^2,N},
\end{aligned}\ee
where the second inequality follows from the submultiplicativity of the Frobenius norm, the first equality follows from Remark \ref{rem:nonnegative-operators} and the third inequality follows from Young's inequality. Therefore, recalling that $p^i=1/2$ for all $i=1,\ldots, N$ and due to \eqref{eq:systemic-example-gamma3}, there exists a $c_0>0$ with
\be
\langle f,  \boldsymbol{\Gamma^3}f+2\Pi f -c_0 f\rangle_{L^2,N}\geq 0,\quad \text{for all } f\in L^2([0,T],\R^N),
\ee
whenever it holds that 
\be\label{eq:kappa-T-condition}
\frac{T}{2}\big\| \Delta\kappa\big(2\operatorname{Id}^N-w^{sys}-\Delta w^{sys}\big) \big\|<1.
\ee
Altogether, it follows from Lemma~\ref{lemma:Volterra-game}  that Assumption~\ref{assum:nonnegative-definite} is satisfied, whenever the matrices $\Delta\eps(\operatorname{Id}^N-\Delta w^{sys})(\operatorname{Id}^N-w^{sys})$ and $\Delta c(\operatorname{Id}^N-\Delta w^{sys})(\operatorname{Id}^N-w^{sys})$ are nonnegative definite and \eqref{eq:kappa-T-condition} holds. In this case, Theorem~\ref{thm:finite-player-equilibrium} can be applied and yields explicit operator formulas for the Nash equilibrium. In particular, this allows for arbitrary interaction matrices $w^{sys}\in\R_+^{N\times N}$, if the coefficients $\kappa^i,\eps^i,c^i$ defining the cost functional in \eqref{eq:systemicJ^i} are chosen suitably. In the case where $w^{sys}$ is symmetric, the aforementioned matrices are nonnegative definite whenever the largest eigenvalue of $w^{sys}$ is smaller or equal to 1. 
\end{example}
\begin{remark} Note that in the case without repayments corresponding to $\tau>T$, the analysis of the term $\langle f, \boldsymbol{\Gamma^3}f\rangle_{L^2,N}$ in Example \ref{ex:systemic} considerably simplifies. Namely, in this case, by a similar argument to the ones given for $\boldsymbol{\Gamma^1}$ and $\boldsymbol{\Gamma^2}$, it holds that $\langle f, \boldsymbol{\Gamma^3}f\rangle_{L^2,N}\geq 0$ for all $f\in L^2([0,T],\R^N)$ whenever the matrix $\Delta\kappa(2\operatorname{Id}^N-w^{sys}-\Delta w^{sys})$ is nonnegative definite, yielding sufficient conditions for Assumption~\ref{assum:nonnegative-definite} to be satisfied that are independent of the time horizon $T$. In particular,  if $w^{sys}$ is symmetric with its largest eigenvalue being smaller or equal to 1, Assumption~\ref{assum:nonnegative-definite} is always satisfied without additional assumptions on the coefficients $\kappa^i,\eps^i,c^i$.
\end{remark}

\subsection{Additional proofs of Section~\ref{sec:infinite-player-game}}
In the second part of the Appendix, we give for the sake of completeness several proofs of statements from Section~\ref{sec:infinite-player-game}.
We start with the following lemma, which is needed to prove Proposition~\ref{prop:fredholm-foc-infinite}.
\begin{lemma}\label{lemma:concave-infinite}
Let Assumption~\ref{assum:A-B-C-graphon} hold and assume that $\wt{\boldsymbol A}$ is nonnegative definite. Fix $u\in [0,1]$ and a family of strategies $(\al^{v})_{v\neq u}\in\mathcal{A}^\infty$. Then, the objective functional $\al^u\mapsto J^{u,W}(\al^{u}; (\al^{v})_{v\neq u})$ in \eqref{eq:J^uW} is strictly concave on the set of $\F$-progressively measurable processes in $L^2(\Omega\times [0,T],\R)$.
\end{lemma}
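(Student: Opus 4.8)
The plan is to mimic the proof of Lemma~\ref{lemma:concave-finite}, since the structure of the objective functional $J^{u,W}$ in \eqref{eq:J^uW} is entirely analogous to that of $J^{i,N}$ in \eqref{eq:J^iN}: it is a quadratic functional in $\al^u$ where the only ``pure'' quadratic term in $\al^u$ (after fixing $(\al^v)_{v\neq u}$) comes from $-\langle \al^u,(\wt{\boldsymbol A}+\lambda\boldsymbol I)\al^u\rangle_{L^2}$, whereas the terms involving $(\boldsymbol W\al)(u)$ contribute at most linearly in $\al^u$ once the other coordinates are frozen. Indeed, fixing $u$ and $(\al^v)_{v\neq u}$, the value $(\boldsymbol W\al)(u)=\int_0^1 W(u,v)\al^v\,dv$ has a part that does not depend on $\al^u$ at all; since $W(u,\cdot)$ as an element of $L^2([0,1],\R)$ has the singleton $\{u\}$ as a null set, altering $\al^u$ on a single point does not change $(\boldsymbol W\al)(u)$, so $(\boldsymbol W\al)(u)$ is in fact a constant (independent of $\al^u$) for the purposes of this maximization. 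Hence the only genuinely quadratic dependence on $\al^u$ sits in $-\langle \al^u,(\wt{\boldsymbol A}+\lambda\boldsymbol I)\al^u\rangle_{L^2}$.

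First I would fix $u\in[0,1]$ and $(\al^v)_{v\neq u}\in\mathcal A^\infty$, take two distinct $\F$-progressively measurable processes $\al^u,\beta^u\in L^2(\Omega\times[0,T],\R)$ with $\al^u\neq\beta^u$ on a set of positive $d\P\otimes dt$ measure, and a direct expansion of $J^{u,W}(\varepsilon\al^u+(1-\varepsilon)\beta^u;(\al^v)_{v\neq u})$ for $\varepsilon\in(0,1)$. All terms that are linear in $\al^u$ (namely $\langle b^u,\al^u\rangle_{L^2}$ and the cross term $\langle\al^u,(\wt{\boldsymbol B}+\wt{\boldsymbol B}^*)(\boldsymbol W\al)(u)\rangle_{L^2}$ with $(\boldsymbol W\al)(u)$ frozen) and all terms not depending on $\al^u$ at all cancel in the combination $J^{u,W}(\varepsilon\al^u+(1-\varepsilon)\beta^u)-\varepsilon J^{u,W}(\al^u)-(1-\varepsilon)J^{u,W}(\beta^u)$, leaving exactly
\be
\varepsilon(1-\varepsilon)\,\E\big[\langle \al^u-\beta^u,(\wt{\boldsymbol A}+\lambda\boldsymbol I)(\al^u-\beta^u)\rangle_{L^2}\big],
\ee
by the same computation as in \eqref{eq:concave-finite}.

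Then I would conclude positivity: using \eqref{prod-id} together with the assumption that $\wt{\boldsymbol A}$ is nonnegative definite, one has $\langle \al^u-\beta^u,\wt{\boldsymbol A}(\al^u-\beta^u)\rangle_{L^2}\geq 0$ pointwise in $\omega$, and $\langle\al^u-\beta^u,\lambda(\al^u-\beta^u)\rangle_{L^2}=\lambda\|\al^u-\beta^u\|_{L^2}^2>0$ $\P$-a.s.\ since $\lambda>0$ and $\al^u\neq\beta^u$ on a set of positive measure. Hence the displayed expression is strictly positive for all $\varepsilon\in(0,1)$, which is exactly strict concavity. I do not anticipate a serious obstacle here; the only point requiring a little care is the measure-theoretic justification that $(\boldsymbol W\al)(u)$ genuinely does not depend on the choice of representative of $\al^u$ and that Fubini applies so that all the mixed $u$-and-$(t,s)$ integrals are well defined and finite under Assumption~\ref{assum:A-B-C-graphon} (in particular \eqref{eq:sup-A+B-condition} and $b\in L^2(\Omega\times[0,T]\times[0,1],\R)$), which is precisely the role of the integrability hypotheses in that assumption; this is the same bookkeeping already used implicitly in \eqref{eq:J^uW}.
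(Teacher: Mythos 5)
Your proposal is correct and follows essentially the same route as the paper: the graphon-aggregate terms are affine in $\al^u$ (since $(\boldsymbol W\al)(u)$ is unaffected by the single coordinate $u$) and cancel in the concavity combination, leaving exactly $\varepsilon(1-\varepsilon)\,\E\big[\langle \al^u-\beta^u,(\wt{\boldsymbol A}+\lambda\boldsymbol I)(\al^u-\beta^u)\rangle_{L^2}\big]$, which is strictly positive by the nonnegative definiteness of $\wt{\boldsymbol A}$ and $\lambda>0$. The only cosmetic slip is asserting $\|\al^u-\beta^u\|_{L^2}^2>0$ $\P$-a.s.\ when the processes merely differ on a set of positive $d\P\otimes dt$-measure; what you actually need (and do have) is strict positivity of the expectation $\E\big[\|\al^u-\beta^u\|_{L^2}^2\big]$.
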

\begin{proof}
    Fix $u\in [0,1]$ and $(\al^{v})_{v\neq u}\in\mathcal{A}^\infty$. Then, for any $\F$-progressively measurable $\al^u, \beta^u \in L^2(\Omega\times [0,T],\R)$ such that $\al^u \neq \beta^u$, $d\P \otimes dt$-a.e.~on $\Omega \times [0,T]$, and for all $\varepsilon \in (0,1)$, a direct computation yields 
\begin{equation}
    \begin{aligned}
        & J^{u,W}(\varepsilon \al^{u} + (1-\varepsilon) \beta^{u} ; (\al^{v})_{v\neq u}) - \varepsilon J^{u,W}(\al^{u};  (\al^{v})_{v\neq u}) - (1-\varepsilon) J^{u,W}(\beta^{u};  (\al^{v})_{v\neq u}) \\[0.5ex]
        & = \varepsilon (1-\varepsilon) \, \E\left[ \langle \al^u - \beta^u ,\wt{\boldsymbol{A}} (\al^u - \beta^u) +\lambda(\al^u- \beta^u)\rangle_{L^2} \right]\\[0.5ex]
        &\geq \varepsilon (1-\varepsilon) \lam\, \E\left[\| \al^u- \beta^u\|_{L^2}^2 \right]\\[1ex]
        & >0,
    \end{aligned}
\end{equation}
where we used that $\wt{\boldsymbol A}$ is nonnegative definite by assumption for the first inequality, and the facts that $\eps,\lam>0$ and $\| \al^u- \beta^u\|_{L^2}^2 >0$, $\P$-a.s. for the second inequality.
\end{proof}
\begin{proof}[Proof of Proposition~\ref{prop:fredholm-foc-infinite}]
By Lemma~\ref{lemma:concave-infinite}, for each $u\in [0,1]$ and fixed $(\al^{v})_{v\neq u}\in\mathcal{A}^\infty$, the functional $\al^u\mapsto J^{u,W}(\al^{u}; (\al^{v})_{v\neq u})$ admits a unique maximizer characterized by the critical point at which its G\^ateaux derivative vanishes in all directions. Analogous to the proof of Proposition~\ref{prop:fredholm-foc} this allows the derivation of the following first order condition, for all $\F$-progressively measurable processes $\beta^u$ in $L^2(\Omega\times [0,T],\R)$,
\be \label{eq:foc-infinite}
0= \E\left[\langle\beta^u, -2\lam \al^u - (\wt{\boldsymbol A}+\wt{\boldsymbol A}^*)\al^u - \big(\wt{\boldsymbol B}+\wt{\boldsymbol B}^*\big) (\boldsymbol{W}\al)(u) +b^u\rangle_{L^2} \right].
\ee
By conditioning on $\mathcal F_t$ and using the tower property of conditional expectation we get from \eqref{eq:foc-infinite} the following first order condition given by a stochastic Fredholm equation in one variable, 
\be \label{eq:foc-infinite2}
2\lam \al^u_t = b_t^u - \big((\wt{\boldsymbol A}+\wt{\boldsymbol A}^*)\E_t[\al^u]\big)(t) - \big((\wt{\boldsymbol B}+\wt{\boldsymbol B}^*)(\boldsymbol{W} \E_t[\al])(u)\big)(t),\quad  d\P \otimes dt \textrm{-a.e.}
\ee
Summarizing the resulting equations in \eqref{eq:foc-infinite2} over all players $u\in [0,1]$ and recalling Definition~\ref{def:Nash-infinite-player} yields the the infinite-dimensional coupled system of stochastic Fredholm equations in \eqref{eq:fredholm-foc-infinite} and completes the proof.
\end{proof}
Next, we provide a rigorous proof of Lemma~\ref{lemma:alpha-hat-admissible}, employing a standard approach that leverages Grönwall’s Lemma.
\begin{proof}[Proof of Lemma~\ref{lemma:alpha-hat-admissible}]
Equation \eqref{eq:alpha-hat}, the fact that $(\varphi_i)_{i\in\N}$ is an orthonormal basis of $L^2([0,1],\R)$ and Tonelli's theorem imply
\be
\begin{aligned}\label{eq:admissible-equation}
\int_0^1\int_0^T\E[(\hat\al_t^u)^2]dtdu&= \int_0^T\E\Big[\int_0^1\big(\sum_{i=1}^\infty \varphi_i(u)\hat{\al}^i_t\big)^2du\Big]dt\\
&= \int_0^T\E\big[\sum_{i=1}^\infty (\hat{\al}^i_t)^2\big]dt\\
&=\sum_{i=1}^\infty \int_0^T\E\big[ (\hat{\al}^i_t)^2\big]dt.
\end{aligned}
\ee
In order to bound the infinite series in the last term of equation \eqref{eq:admissible-equation}, we will first bound its terms individually for each $i\in\N$ and then sum them up eventually.

Fix $i\in\N$. First, notice that the solution $\hat{\al}^i$ in \eqref{eq:alpha_t^i-hat} to the stochastic Fredholm equation in \eqref{eq:foc-infinite-reduced} arises from the Volterra equation
\be\label{eq:volterra-equation-hat-al^i}
\hat{\al}^i_t=\gamma_t^i-\int_0^t E^i(t,s)\hat{\al}^i_sds,
\ee
where $\gamma^i$ and $E^i$ are defined in \eqref{eq:gamma^i} and \eqref{eq:E^i}, respectively. The following lemma is needed to bound $\hat\al^i$.
\begin{lemma}\label{lemma:gamma_i-E^i-bound}
There exist constants $C_\gamma,C_E<\infty$ such that for all $i\in\N$ it holds that\begin{enumerate}
\item[\textbf{(i)}] $\int_0^T\E[(\gamma_t^i)^2]dt\leq C_\gamma\int_0^T\E[(\wt{b}_t^i)^2]dt$,
\item[\textbf{(ii)}] $\sup_{t\leq T}\int_0^T |E^i(t,s)|^2ds\leq C_E$.
\end{enumerate}
\end{lemma}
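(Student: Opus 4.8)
The plan is to derive both bounds from elementary Cauchy--Schwarz estimates, using two ingredients that are uniform in $i$. First, since $W$ takes values in $[0,1]$, a Schur--test (or Hilbert--Schmidt) argument gives $\|\boldsymbol{W}\|_{\operatorname{op}}\le 1$, so every eigenvalue satisfies $|\vartheta_i|\le 1$, and hence the kernels in \eqref{eq:C^i} obey the pointwise bound $|\wt{C}^i(t,s)|\le \tfrac{1}{2\lam}\big(|\wt{A}(t,s)|+|\wt{B}(t,s)|\big)$ for every $i\in\N$. Combined with the integrability condition \eqref{eq:sup-A+B-condition}, this produces a constant $M<\infty$, independent of $i$, with $\sup_{t\le T}\int_0^T|\wt{C}^i(t,s)|^2\,ds\le M$ and $\sup_{s\le T}\int_0^T|\wt{C}^i(t,s)|^2\,dt\le M$. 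Second, choosing $c_W\le\lam$ in Assumption~\ref{assum:nonnegative-definite-graphon} (which is without loss of generality, exactly as argued in the proof of Theorem~\ref{thm:infinite-player-equilibrium}), the second part of Lemma~\ref{lemma:C^i-nonnegative} shows that the self-adjoint operators $\boldsymbol{D}_t^i=\boldsymbol{I}+\wt{\boldsymbol{C}}_t^i+(\wt{\boldsymbol{C}}_t^i)^*$ are coercive with constant $c_W/\lam$ \emph{uniformly} in $t\in[0,T]$ and $i\in\N$; consequently $(\boldsymbol{D}_t^i)^{-1}$ exists with $\|(\boldsymbol{D}_t^i)^{-1}\|_{\operatorname{op}}\le \lam/c_W$ for all $t$ and $i$.

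For part (ii) I would start from \eqref{eq:E^i} and split, for $s<t$, $|E^i(t,s)|^2\le 2\,\big|\langle \mathds{1}_{\{t<\cdot\}}\wt{C}^i(\cdot,t),(\boldsymbol{D}_t^i)^{-1}\mathds{1}_{\{t<\cdot\}}\wt{C}^i(\cdot,s)\rangle_{L^2}\big|^2+2\,|\wt{C}^i(t,s)|^2$. By Cauchy--Schwarz in $L^2([0,T],\R)$ and the operator-norm bound above, the inner product is at most $\tfrac{\lam}{c_W}\big(\int_t^T|\wt{C}^i(r,t)|^2dr\big)^{1/2}\big(\int_t^T|\wt{C}^i(r,s)|^2dr\big)^{1/2}\le \tfrac{\lam M}{c_W}$, a constant independent of $t,s,i$. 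Integrating the displayed inequality over $s\in(0,t)$ and using $\int_0^T|\wt{C}^i(t,s)|^2ds\le M$ yields $\sup_{t\le T}\int_0^T|E^i(t,s)|^2ds\le 2T(\lam M/c_W)^2+2M=:C_E$, which is finite and independent of $i$.

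For part (i) I would argue analogously: from \eqref{eq:gamma^i}, $(\gamma_t^i)^2\le 2(\wt{b}_t^i)^2+2\,\big|\langle \mathds{1}_{\{t<\cdot\}}\wt{C}^i(\cdot,t),(\boldsymbol{D}_t^i)^{-1}\mathds{1}_{\{t<\cdot\}}\E_t[\wt{b}^i_\cdot]\rangle_{L^2}\big|^2$; bounding the inner product by Cauchy--Schwarz and the operator-norm estimate gives $\big|\langle\cdots\rangle\big|^2\le \tfrac{\lam^2 M}{c_W^2}\int_t^T(\E_t[\wt{b}^i_r])^2\,dr$. Taking expectations, applying the conditional Jensen inequality $\E[(\E_t[\wt{b}^i_r])^2]\le\E[(\wt{b}^i_r)^2]$, integrating in $t$ and using Tonelli's theorem leads to $\int_0^T\E[(\gamma_t^i)^2]dt\le \big(2+2T\lam^2M/c_W^2\big)\int_0^T\E[(\wt{b}^i_t)^2]dt=:C_\gamma\int_0^T\E[(\wt{b}^i_t)^2]dt$, with $C_\gamma$ independent of $i$.

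None of these steps is genuinely deep; the only points requiring care are bookkeeping which argument of $\wt{C}^i$ is being integrated, so that the appropriate ``row'' or ``column'' supremum in \eqref{eq:sup-A+B-condition} is invoked, and checking that the coercivity constant $c_W/\lam$ furnished by Lemma~\ref{lemma:C^i-nonnegative} is genuinely uniform in both $t$ and $i$ — that uniformity is precisely what upgrades the estimates to the $i$-independent constants $C_\gamma$ and $C_E$ needed later for the admissibility argument in Lemma~\ref{lemma:alpha-hat-admissible}.
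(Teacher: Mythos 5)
Your argument is correct and follows essentially the same route as the paper's proof: uniform boundedness of the eigenvalues $\vartheta_i$ plus \eqref{eq:sup-A+B-condition} to get an $i$-independent bound on $\sup_t\int_0^T|\wt{C}^i(t,s)|^2ds$ and $\sup_s\int_0^T|\wt{C}^i(t,s)|^2dt$, the uniform coercivity from Lemma~\ref{lemma:C^i-nonnegative} (with $c_W\le\lam$) to obtain $\|(\boldsymbol{D}_t^i)^{-1}\|_{\operatorname{op}}\le\lam/c_W$, and then Cauchy--Schwarz, conditional Jensen and Tonelli, yielding the same constants up to bookkeeping. The only cosmetic deviations are that you justify $|\vartheta_i|\le 1$ via the Hilbert--Schmidt bound on $\boldsymbol{W}$ rather than citing the eigenvalue estimate the paper quotes, and you invoke self-adjointness plus coercivity for invertibility where the paper cites its auxiliary inverse-operator lemma; both are equivalent.
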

Now \eqref{eq:volterra-equation-hat-al^i}, the Cauchy-Schwarz inequality, part (ii) of Lemma~\ref{lemma:gamma_i-E^i-bound}, and Fubini's theorem imply that for all $t\in[0,T]$ it holds that
\be\begin{aligned}
\E[(\hat\al_t^i)^2]&\leq 2\E[(\gamma_t^i)^2]+2\E\Big[\Big(\int_0^t E^i(t,s)\hat{\al}^i_sds\Big)^2\Big]\\
&\leq 2\E[(\gamma_t^i)^2]+2\int_0^t |E^i(t,s)|^2ds \int_0^t\E\big[(\hat{\al}^i_s)^2\big]ds\\
&\leq 2\E[(\gamma_t^i)^2]+2\,C_E \int_0^t\E\big[(\hat{\al}^i_s)^2\big]ds
\end{aligned}\ee
It follows from Grönwall's inequality applied to the function $\E[(\hat\al^i)^2]:[0,T]\to\R$ that 
\be\begin{aligned}
\E[(\hat\al_t^i)^2]&\leq 2\E[(\gamma_t^i)^2] +\int_0^t 2\E[(\gamma_s^i)^2]2C_E e^{2C_E(t-s)}ds\\
&\leq 2\E[(\gamma_t^i)^2] +4C_Ee^{2C_ET}\int_0^T \E[(\gamma_s^i)^2]ds,
\end{aligned}\ee
so that integration with respect to $t$ and part (i) of Lemma~\ref{lemma:gamma_i-E^i-bound} yield
\be\label{eq:bound-alpha^i-hat}
\int_0^T\E[(\hat\al_t^i)^2]dt\leq
C_{\al}\int_0^T\E[(\wt{b}_t^i)^2]dt,
\ee
where $C_\al:=(2+4C_E T e^{2C_ET}) C_\gamma>0$ independent of $i\in\N$.
Now it follows from the definition of the coefficients $\smash{\wt{b}^i}$ in \eqref{eq:alpha-tilde-b-tilde}, Tonelli's theorem, Parseval's identity and the assumption that $b\in L^2( \Omega\times[0,T]\times [0,1],\R)$ that
\be\begin{aligned}\label{eq:bound-tilde-b^i}
\sum_{i=1}^\infty \int_0^T\E\big[(\wt{b}^i_t)^2\big]dt
&= \E\Big[\int_0^T\sum_{i=1}^\infty\langle\varphi_i,b_t \rangle_{L^2([0,1],\R)}^2dt\Big]\\
&=\E\Big[\int_0^T\int_0^1(b^u_t)^2dudt\Big]<\infty.
\end{aligned}\ee
Finally, \eqref{eq:admissible-equation}, \eqref{eq:bound-alpha^i-hat} and \eqref{eq:bound-tilde-b^i} together complete the proof.
\end{proof}
It remains to prove Lemma~\ref{lemma:gamma_i-E^i-bound} for which the following lemma is needed. 
\begin{lemma} \label{lemma:inverse operator}
Let $F$ be a bounded linear operator from a real Hilbert space $V$
into itself. Suppose that there exists a constant $c>0$ such that $\langle Fx,x\rangle \geq c_0 \langle x,x\rangle$ for all $x\in  V$. Then $F$ is invertible and $\|F^{-1}\|_{\operatorname{op}}\leq c^{-1}$.
\end{lemma}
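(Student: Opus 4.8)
The plan is to prove this one-sided analogue of the Lax--Milgram theorem by the classical ``closed range plus dense range'' argument, bearing in mind that $F$ is \emph{not} assumed to be self-adjoint, so the spectral theorem cannot be invoked directly and an eigenvalue estimate is unavailable.

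First I would extract from the coercivity hypothesis a two-sided comparison. For every $x\in V$ the Cauchy--Schwarz inequality gives $c\,\langle x,x\rangle\le \langle Fx,x\rangle\le \|Fx\|\,\|x\|$, hence $\|Fx\|\ge c\|x\|$ for all $x\in V$. This immediately yields injectivity of $F$. Moreover it shows that $\operatorname{ran}(F)$ is closed: if $Fx_n\to y$ in $V$, then $\|x_n-x_m\|\le c^{-1}\|Fx_n-Fx_m\|$ shows that $(x_n)$ is Cauchy, so $x_n\to x$ for some $x\in V$, and continuity of $F$ gives $Fx=y\in\operatorname{ran}(F)$.

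Next I would show that $\operatorname{ran}(F)$ is dense. Let $z\in\operatorname{ran}(F)^{\perp}$. Then in particular $\langle Fz,z\rangle=0$, which together with $\langle Fz,z\rangle\ge c\|z\|^2$ forces $z=0$. Hence $\operatorname{ran}(F)^{\perp}=\{0\}$, so $\operatorname{ran}(F)$ is dense in $V$; being also closed by the previous step, it equals $V$. Therefore $F$ is a bijection of $V$ onto itself, and being bounded and bijective between Banach spaces it has a bounded inverse.

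Finally, for the quantitative bound I would apply $\|Fx\|\ge c\|x\|$ with $x=F^{-1}y$ for an arbitrary $y\in V$: this gives $\|y\|\ge c\,\|F^{-1}y\|$, i.e. $\|F^{-1}y\|\le c^{-1}\|y\|$, and hence $\|F^{-1}\|_{\operatorname{op}}\le c^{-1}$. There is no genuine obstacle in this proof; the only point that needs a little care is that self-adjointness of $F$ is not available, which is exactly why the argument must run through the closed-range and dense-range steps rather than through a direct estimate on the spectrum.
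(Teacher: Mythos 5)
Your proof is correct: the coercivity estimate combined with Cauchy--Schwarz gives $\|Fx\|\ge c\|x\|$, hence injectivity and closed range, the orthogonality argument gives dense range, and the quantitative bound follows directly from $\|Fx\|\ge c\|x\|$ applied to $x=F^{-1}y$ (so the open mapping theorem is not even needed). The paper does not write out an argument at all --- it simply cites Problem~165 in Chapter~10 of Torchinsky --- and your write-up is exactly the standard Lax--Milgram-type proof that this reference contains, so there is nothing to criticize beyond noting that the constant is written $c_0$ in the statement but is clearly meant to be the same $c$ you use.
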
 
\begin{proof}
    See Problem 165 in Chapter~10 of \citet{torchinsky2015problems}.
\end{proof}

\begin{proof}[Proof of Lemma~\ref{lemma:gamma_i-E^i-bound}]
Fix $i\in\N$. 

(i) Recall that 
\be
\gamma_t^i =   \wt{b}^i_t  -  \langle  \mathds{1}_{\{t< \cdot\}}  \wt{C}^i(\cdot,t),  ({\boldsymbol D}_t^i)^{-1}\mathds{1}_{\{t< \cdot\}} \E_t[\wt{b}^i_{\cdot}] \rangle_{L^2}.
\ee
It suffices to show that there exists a $\wt{C}_\gamma<\infty$ independent of $i\in \N$ such that
\be \label{eq:admissibility-1}
\E\left[ \int_0^T \left|\int_t^T\wt{C}^i(r,t) \big(({\boldsymbol D}_t^i)^{-1}\mathds{1}_{\{t< \cdot\}} \E_t[ \wt{b}^i_{\cdot}]\big)(r)dr\right|^2dt\right]\leq \wt{C}_\gamma \E\Big[\int_0^T(\wt{b}_t^i)^2dt\Big],
\ee
since then setting $C_\gamma:=2+2\wt{C}_\gamma$ yields (i). Next, note that choosing $c_W\leq\lam$ in Assumption~\ref{assum:nonnegative-definite-graphon} together with Lemma~\ref{lemma:C^i-nonnegative} and Lemma~\ref{lemma:inverse operator} yields
\be\label{eq:bound-(D_t^i)^-1}
\sup_{i\in\N}\sup_{t\leq T}\|({\boldsymbol D}_t^i)^{-1}\|_{\operatorname{op}}\leq\frac{\lam}{c_W}<\infty.
\ee
Now from \eqref{eq:bound-(D_t^i)^-1}, the conditional Jensen inequality, Fubini's theorem and the tower property we get for all $0\leq t\leq T$ that
\be \label{gg6} 
\begin{aligned} 
\E\Big[ \int_0^T\big|\big(({\boldsymbol D}_t^i)^{-1}\mathds{1}_{\{t< \cdot\}} \E_t[ \wt{b}^i_{\cdot}]\big)(r)\big|^2 dr\Big] 
&\leq\lambda^2c_W^{-2}\E\Big[ \int_0^T\big|\mathds{1}_{\{t< \cdot\}} \E_t[ \wt{b}^i_r]\big|^2 dr\Big] \\ 
&\leq \lambda^2c_W^{-2}\int_0^T\E\big[ \E_t[( \wt{b}^i_r)^2]\big] dr\\
& = \lambda^2c_W^{-2} \int_0^T\E\big[(\wt{b}_r^i)^2\big]dr. 
\end{aligned} 
\ee
Next, notice that the eigenvalues $(\vartheta_i)_{i\in\N}$ of $\boldsymbol{W}$ are uniformly bounded by
\be\label{eq:bound-theta_i}
|\vartheta_i|\leq \Big(\int_0^1\int_0^1 W(u,v)^2dudv\Big)^{1/2},\quad i\in\N,
\ee
(see equation (7.20) in \citet{lovasz2012large}). Thus it follows from the integrability condition \eqref{eq:sup-A+B-condition} on $\wt{A}$ and $\wt{B}$ and \eqref{eq:bound-theta_i} that there is a $C_C<\infty$ independent of $i\in\N$ such that
\be\label{eq:bound_C^i}
\sup_{t\leq T}\int_0^T |\wt{C}^i(t,s)|^2ds+\sup_{s\leq T}\int_0^T |\wt{C}^i(t,s)|^2dt\leq C_C,\quad i\in\N.
\ee
The monotonicity of the integral,  the Cauchy-Schwarz inequality, Fubini's theorem, \eqref{gg6}  and \eqref{eq:bound_C^i} yield
\be
 \begin{aligned} 
&\E\left[ \int_0^T \left|\int_t^T\wt{C}^i(r,t) \big(({\boldsymbol D}_t^i)^{-1}\mathds{1}_{\{t< \cdot\}} \E_t[ \wt{b}^i_{\cdot}]\big)(r)dr\right|^2dt\right] \\
&\leq \E\left[ \int_0^T\Big(\int_0^T \big|\wt{C}^i(r,t)\big| \big|\big(({\boldsymbol D}_t^i)^{-1}\mathds{1}_{\{t< \cdot\}} \E_t[ \wt{b}^i_{\cdot}]\big)(r)\big|dr\Big)^2
dt\right] \\
&\leq  \int_0^T\int_0^T \big|\wt{C}^i(r,t)\big|^2dr \hspace{1mm}\E\left[\int_0^T \big|\big(({\boldsymbol D}_t^i)^{-1}\mathds{1}_{\{t< \cdot\}} \E_t[ \wt{b}^i_{\cdot}]\big)(r)\big|^2dr\right]
dt \\ 
&\leq TC_C\lambda^2c_W^{-2} \int_0^T\E\big[(\wt{b}_t^i)^2\big]dt,\quad i\in \N. 
\end{aligned} 
\ee
This verifies \eqref{eq:admissibility-1} with $\wt{C}_\gamma:=TC_C\lambda^2c_W^{-2}<\infty$ and completes the proof of (i). 

(ii) Recall that 
\be\label{eq:E^i-recalled}
E^i(t,s) = - \mathds{1}_{\{t>s\}}  \big( \langle  \mathds{1}_{\{t< \cdot\}}   \wt{C}^i(\cdot,t), ({\boldsymbol D}_t^i)^{-1}   \mathds{1}_{\{t< \cdot\}}  \wt{C}^i(\cdot,s)  \rangle_{L^2}    -   \wt{C}^i(t,s)   \big).
\ee
Due to \eqref{eq:bound_C^i}, it suffices to show that there is a $\wt{C}_E<\infty$ such that
\be
\sup_{t\leq T}\int_0^t\Big|\int_t^T \wt{C}^i(r,t)\big(({\boldsymbol D}_t^i)^{-1}\mathds{1}_{\{t< \cdot\}} \wt{C}^i(\cdot,s)\big)(r) dr\Big|^2ds\leq\wt{C}_E.    
\ee
The monotonicity of the integral, the Cauchy-Schwarz inequality, \eqref{eq:bound-(D_t^i)^-1} and \eqref{eq:bound_C^i} imply for all $i\in\N$ that
\be\begin{aligned}
&\sup_{t\leq T}\int_0^t\Big|\int_t^T \wt{C}^i(r,t)\big(({\boldsymbol D}_t^i)^{-1}\mathds{1}_{\{t< \cdot\}} \wt{C}^i(\cdot,s)\big)(r) dr\Big|^2ds\\&
\leq\sup_{t\leq T}\int_0^T\Big(\int_0^T \big|\wt{C}^i(r,t)\big|\big|\big(({\boldsymbol D}_t^i)^{-1}\mathds{1}_{\{t< \cdot\}} \wt{C}^i(\cdot,s)\big)(r)\big| dr\Big)^2ds\\&
\leq\sup_{t\leq T}\int_0^T\int_0^T \big|\wt{C}^i(r,t)\big|^2dr\int_0^T\big|\big(({\boldsymbol D}_t^i)^{-1}\mathds{1}_{\{t< \cdot\}} \wt{C}^i(\cdot,s)\big)(r)\big|^2 drds\\&
\leq\sup_{t\leq T}\int_0^T\int_0^T \big|\wt{C}^i(r,t)\big|^2dr  \lam^2c_W^{-2}\int_0^T\big|\mathds{1}_{\{t< r\}} \wt{C}^i(r,s)\big|^2 drds\\&
\leq\sup_{t\leq T}\int_0^T \big|\wt{C}^i(r,t)\big|^2dr  \lam^2c_W^{-2}TC_C\\&
\leq \lam^2c_W^{-2}TC_C^2=:\wt{C}_E<\infty.
\end{aligned}\ee
Recalling \eqref{eq:E^i-recalled} and setting $C_E:=2\wt{C}_E+2C_C$ completes the proof of (ii).
\end{proof}

\end{document}